\documentclass[10pt]{amsart}
\usepackage{amsmath}
\usepackage{tikz}

\usepackage{wrapfig} 
\usepackage{mathdots} 
\usepackage{enumitem}

\makeatletter

\def\normformnew#1[#2]{
\@ifnextchar({%
\decsuba[{#1}^{}_{#2}\left(]}%
{
{#1}^{}_{#2}}}
\def\decsuba[#1]#2{
\decsubb[#1]}

\def\decsubb[#1]#2,#3{
\@ifnextchar({
\decsuba[#1{}^{#2}_{#3}\left(]}%
{
\@ifnextchar){\decsubc[#1{}^{#2}_{#3}]}
{
\decsubd[#1]{#2}{#3}
}}}

\def\decsubc[#1]#2{
\@ifnextchar){
\decsubc[#1\right)]}%
{\@ifnextchar({
\decsuba[#1\right)\left(]}%
{
#1\right)}}}

\def\decsubd[#1]#2#3#4{
\@ifnextchar({
\decsuba[#1{}^{#2}_{#3#4}\left(]}%
{
\@ifnextchar){\decsubc[#1{}^{#2}_{#3#4}]}
{
\decsubd[#1]#2{#3#4}
}}}

\newcommand{\treenotation}[1]{\treenotationevenbetter{#1}}

\newcommand{\treenotationevenbetter}[1]{
\treenotationstart{#1}}

\def\treenotationstart#1{
\@ifnextchar[{
\treenotationsqsq{#1}}%
{
\@ifnextchar({
\treenotationsqro{#1}
}
{
#1}}}%

\def\hiddenindex{\phantom{1}}

\def\treenotationsqsq#1[#2]{%
\@ifnextchar[{
\treenotationsqsq{#1{}^{\hiddenindex}_{#2}}}%
{
\@ifnextchar({
\treenotationsqro{#1{}^{\hiddenindex}_{#2}}
}
{
#1{}^{\hiddenindex}_{#2}}}}%

\def\treenotationsqro#1(#2){%
\@ifnextchar[{
\treenotationrosq{#1{}^{#2}}}%
{
\@ifnextchar({
\treenotationroro{#1{}^{#2}_{\hiddenindex}}
}
{
#1{}_{\hiddenindex}^{#2}}}}%

\def\treenotationroro#1(#2){%
\@ifnextchar[{
\treenotationrosq{#1{}^{#2}}}%
{
\@ifnextchar({
\treenotationroro{#1{}^{#2}_{\hiddenindex}}
}
{
#1{}_{\hiddenindex}^{#2}}}}%

\def\treenotationrosq#1[#2]{%
\@ifnextchar[{
\treenotationsqsq{#1_{#2}}}%
{
\@ifnextchar({
\treenotationsqro{#1_{#2}}
}
{
#1_{#2}}}}%

\makeatother

\DeclareMathOperator{\opt}{opt}

\newcommand{\pspace}{\Omega}

\newcommand{\domlinprevs}{\mathcal{M}}

\newcommand{\decnode}{\treenotation{N}}
\newcommand{\chancenode}{\treenotation{N}}
\newcommand{\decision}{\treenotation{d}}
\newcommand{\event}{\treenotation{E}}
\newcommand{\reward}{\treenotation{r}}

\newcommand{\treeevent}[1]{\mathrm{ev}({#1})}
\newcommand{\rewardset}{\mathcal{R}}


\DeclareMathOperator{\back}{back}

\DeclareMathOperator{\backopt}{\back_{\opt}}


\newtheorem{lemma}{Lemma}

\newtheorem{corollary}[lemma]{Corollary}
\newtheorem{theorem}[lemma]{Theorem}
\newtheorem{example}[lemma]{Example}
\newtheorem{definition}[lemma]{Definition}

\newtheorem{property}{Property}


\newcommand{\tree}{T}
\newcommand{\atree}{U}

\newcommand{\setoftrees}{\mathcal{T}}

\DeclareMathOperator{\subtreeatoper}{st}
\newcommand{\subtreeat}[2]{\subtreeatoper_{#2}(#1)}

\DeclareMathOperator{\extoper}{ext}
\DeclareMathOperator{\normoper}{norm}
\DeclareMathOperator{\normgambles}{gamb}
\DeclareMathOperator{\children}{ch}
\DeclareMathOperator{\nfd}{nfd}

\newcommand{\biggambplus}{\bigoplus}
\newcommand{\gambplus}{\oplus}
\newcommand{\bigdecnodeunion}{\bigsqcup}
\newcommand{\decnodeunion}{\sqcup}
\newcommand{\bigchancenodemixture}{\bigodot}
\newcommand{\chancenodemixture}{\odot}

\newcommand{\compl}[1]{\overline{#1}}


\begin{document}

\title[]{Subtree Perfectness, Backward Induction, and Normal-Extensive Form Equivalence for Single Agent Sequential Decision Making under Arbitrary Choice Functions}
\author{Nathan Huntley}
\email{nath.huntley@gmail.com}
\address{Durham University, Department of Mathematical Sciences, Science Laboratories, South Road, Durham DH1 3LE, United Kingdom}
\author{Matthias C. M. Troffaes}
\email{matthias.troffaes@durham.ac.uk}
\address{Durham University, Department of Mathematical Sciences, Science Laboratories, South Road, Durham DH1 3LE, United Kingdom}

\keywords{choice function; subtree perfectness; sequential; normal form; extensive form; decision tree; backward induction; consequentialist}

\date{16 September, 2011}

\begin{abstract}
  We revisit and reinterpret Selten's concept of subgame perfectness in the context of single agent normal form sequential decision making, which leads us to the concept of subtree perfectness. Thereby, we extend Hammond's characterization of extensive form consequentialist consistent behaviour norms to the normal form and to arbitrary choice functions under very few assumptions. In particular, we do not need to assume probabilities on any event or utilities on any reward. We show that subtree perfectness is equivalent to normal-extensive form equivalence, and is sufficient, but, perhaps surprisingly, not necessary, for backward induction to work. 
\end{abstract}

\maketitle

\section{Introduction}
\label{sec:intro}

In a single agent sequential decision problem, at any stage, one has two ways of looking at its solution: the problem can be considered either in its simplest form---discarding any past stages, or as part of a much larger problem---possibly considering choices and events that did not actually obtain. A reasonable requirement is that, at any stage, the solution is independent of the larger problem it is embedded in. In this paper, we call this requirement \emph{subtree perfectness}.

Selten~\cite{1975:selten} introduced a similar idea for multi-agent extensive form games, called \emph{subgame perfectness}. In such games, players decide sequentially, and a player's \emph{behaviour strategy} specifies a decision at each point where he must choose. An \emph{equilibrium point} is a strategy for each player, such that no player would change their strategy if they knew the strategy of all others.

A \emph{subgame} is a part of a game that is again a game. For example, if a game consists of four rounds, then after having played two rounds, one can consider the remaining two rounds as a separate game. An equilibrium point of the full game of course implies a strategy for each player in the subgame simply by restricting the strategy of each player in the full game to only that subgame. If, for every subgame, the restriction of the equilibrium point of the full game to that subgame also yields an equilibrium point of that subgame, then the equilibrium point of the full game is called \emph{subgame perfect}.

Selten showed that for games with perfect recall, (perfect\footnote{Without going into much detail, a \emph{perfect equilibrium point} is one which is stable under small perturbations \cite[p.~38]{1975:selten}.}) equilibrium points are subgame perfect \cite[p.~39, Thm.~2]{1975:selten}, that is, they are independent of any larger game in which they could be embedded.

As mentioned, in this paper we investigate single agent sequential decision making modelled by decision trees. Although such problems differ in many ways from extensive form games, subtree perfectness is clearly analogous to subgame perfectness, as the following example shows.

\begin{figure}
  \hfill
  \begin{minipage}[t]{.45\textwidth}
  \begin{center}
    \begin{tikzpicture}
      [minimum size=2em,parent anchor=east,child anchor=west,grow'=east,transform shape]
      \node[draw,rectangle]{$N_1$}
      [sibling distance=3em, level distance=4em]
      child{
        node[draw,rectangle]{$N_2$}
        [sibling distance=2em]
        child{
          node[right]{cake}
        }
        child{
          node[right]{ice cream}
        }
      }
      child{
        node[right]{scones}
      };
    \end{tikzpicture}
    \caption{Two-stage problem.}
    \label{fig:two:stage:problem}
  \end{center}
  \end{minipage}
  \hfill
  \begin{minipage}[t]{.45\textwidth}
  \begin{center}
    \begin{tikzpicture}[minimum size=2em,parent anchor=east,child anchor=west,grow'=east,transform shape]
      \node[draw,rectangle]{$N_2$}
      [sibling distance=2em]
      child{
        node[right]{cake}
      }
      child{
        node[right]{ice cream}
      };
    \end{tikzpicture}
    \caption{Second stage.}
    \label{fig:second:stage}
  \end{center}
  \end{minipage}
  \hfill
\end{figure}

Consider the decision problem in Fig.~\ref{fig:two:stage:problem}. In the first stage, the subject chooses between taking scones, or proceeding to the second stage. In the second stage, the subject chooses between cake, or ice cream. Suppose the subject prefers to reject scones and to choose ice cream at the second stage. This strategy induces a substrategy in the subtree for the second stage: choose ice cream over cake.

But, as with multi-agent games, we can instead consider the subtree for the second stage separately, as in Fig.~\ref{fig:second:stage}. If, in this smaller tree, the subject prefers ice cream, then his solution is \emph{subtree perfect}: his solution for the full tree induces a strategy in the subtree, and this strategy coincides with his solution for the subtree. If the subject states a difference preference (either no preference, or clear preference for cake), then his solution lacks subtree perfectness. So, subtree perfectness essentially means that the optimal induced strategies in a subtree do not depend on the full tree in which the subtree is embedded.

Our goal is to determine the conditions under which a theory of choice is subtree perfect. We consider a very large class of theories of choice, namely any that can be represented by conditional choice functions on gambles: we merely assume that for any set of gambles (functions from the possibility space $\Omega$ to a set of rewards $\rewardset$; these generalize random variables, or horse lotteries), and any conditioning event, the subject can give a non-empty subset of gambles that he considers optimal. Gambles that are non-optimal would never be selected, and the subject is unable to express further preference between the optimal ones. Maximizing expected utility is a simple example of such a choice function.

General choice functions, however, need neither probability nor utility---not even a preorder. Consequently, we must take care to distinguish clearly between the \emph{normal form} and the \emph{extensive form} of a decision problem (or game). 
While these forms are equivalent when maximizing expected utility \cite[1.3.4]{1961:raiffa}, their equivalence breaks down when expected utility is abandoned, as shown by for instance Seidenfeld \cite{1988:seidenfeld}, Machina \cite{1989:machina}, and Jaffray \cite{1999:jaffray}, among others.

Further, the standard definitions of normal and extensive form are not compatible with many choice functions. The usual extensive form solution for a decision tree, as given by Raiffa and Schlaifer~\cite[1.2.1]{1961:raiffa}, involves using backward induction to replace subtrees with their maximum expected utility. Clearly, this definition is too restrictive for general choice functions, particularly those who do not correspond to a total preorder. 
Therefore, instead, we use the terms normal and extensive form in a far more general sense, while retaining their core features: the normal form specifies all actions in all eventualities in advance, whereas the extensive form specifies actions locally at each decision node.

Our main results start out from the traditional normal form method of listing all strategies, finding their corresponding gambles, applying a choice function, and listing all strategies that induce optimal gambles. In doing so, we assume act-state independence---that is, choice functions do not depend on the decision. Dropping act-state independence is beyond the scope of this paper. Under this assumption, we find three necessary and sufficient conditions for a choice function to induce a subtree perfect normal form solution. Further, we show that a normal form solution induced by a choice function satisfying these conditions will always have an equivalent extensive form representation. Interestingly, this does not hold for subtree perfect normal form solutions in general, only those induced by a choice function.

Our results are very similar to those of Hammond~\cite{1988:hammond}, although it may appear that Hammond's goal is quite different from ours. Hammond considers extensive form solutions only, calls them \emph{behaviour norms} \cite[p.~28]{1988:hammond}, and defines a notion of \emph{consistency} which corresponds exactly to what we call subtree perfectness for extensive form solutions.
Two trees are called \emph{strategically equivalent} \cite[p.~1636]{1989:machina} 
if their normal form decisions (or, strategies) induce the same set of gambles. Hammond calls a behaviour norm \emph{consequentialist} if it preserves strategic equivalence, that is, if strategically equivalent trees also have strategically equivalent behaviour norms. Hammond shows that a consistent and consequentialist behaviour norm implies a choice function on gambles, and then investigates the properties of this choice function. Although our goal, to find necessary and sufficient conditions on \emph{arbitrary} choice functions under which they induce subtree perfect \emph{normal} form solutions, is apparently different, Hammond's approach effectively amounts to the same thing, but starting from extensive form, and only for a particular class of choice functions.

Even more so, because, as we will show, subtree perfectness implies normal-extensive form equivalence, unsurprisingly, Hammond's necessary and sufficient conditions on a choice function for it to be implied by a consistent and consequentialist behaviour norm, are very similar to our necessary and sufficient conditions for subtree perfectness. Hammond's first condition, that the choice function must induce a total preorder, is identical to our Property~\ref{prop:intersection:property}. His second, the independence axiom, is not present in our work because it only arises if probabilities can be assigned to certain chance arcs, an assumption we do not make. His third, a type of sure-thing principle, is very similar to our Property~\ref{prop:mixture:property}, where the difference is again based on whether or not probabilities are admitted. Finally, our Property~\ref{prop:conditioning:property} is not present in Hammond's account, because of a small difference in the definition of decision trees.

Since our study is closely related to Hammond's work, and makes use of many similar ideas, we shall throughout note where our definitions and properties coincide with those of Hammond, and also discuss the implications of the differences in approach. It turns out that the only significant difference is dependent upon exactly how consequentialism is defined. Hammond writes a description of consequentialism that turns out to be essentially identical to our approach, but his mathematical definition that he uses in all proofs is actually a weaker version, with which one cannot prove results quite as strong as ours. In Theorem~\ref{theorem:strong:hammond:theorem}, we unify our work with Hammond's informal but strong definition.


Subtree perfectness also relates to Machina's~\cite[p.~1627]{1989:machina} concept of \emph{separability over mutually exclusive events}, which essentially boils down to subtree perfectness for probability trees. Although Machina defines separability using probabilities and a total preorder, it can be easily generalized for arbitrary choice functions and more general gambles that do not involve probabilities. All such possible generalizations seem to be equivalent to, or implied by, our Property~\ref{prop:mixture:property}.

A different concept called \emph{separability} is defined by McClennen~\cite[p.~122]{1990:mcclennen}. Under the assumption of \emph{dynamic consistency} \cite[p.~120]{1990:mcclennen}, this form of separability proves to be equivalent to subtree perfectness. The condition of dynamic consistency is implicitly assumed in our solutions. Although once again McClennen's decision trees are somewhat different from ours, his Theorems 8.1 and 8.2 are very similar to our Lemma~\ref{lemma:conditioning:intersection:mixture:are:necessary:for:factuality}, and indeed the decision trees used in the proofs are almost identical.

Subtree perfectness is strongly linked to backward induction methods for solving decision trees. Indeed, under a total preorder, failures of backward induction are often used to illustrate violations of subtree perfectness~\cite{1986:lavalle,1989:machina}. We find that subtree perfectness is sufficient, but, perhaps surprisingly, \emph{not} necessary, for backward induction to work. In particular, a total preorder is not required. Further, backward induction implies a weaker form of subtree perfectness, in which every strategy must induce an optimal substrategy in every subtree, but the set of optimal strategies is not required to induce the set of all optimal substrategies in every subtree.

The paper is structured as follows: Section~\ref{sec:dectrees} explains decision trees and introduces notation. Section~\ref{sec:forms} provides a careful definition of normal and extensive form solutions, and introduces the concept of gambles to more easily work with normal form solutions. Section~\ref{sec:solution} introduces choice functions and their relationship with normal form solutions. Section~\ref{sec:counterfactuals} defines subtree perfectness and contains the principal results. Section~\ref{sec:backward:induction:and:factuality} explores when a backward induction method can be used to find the normal form solution induced by a choice function. Section~\ref{sec:discussion} examines the relationships subtree perfectness has with backward induction, extensive form equivalence, and the ideas of Hammond, McClennen, and Machina.

\section{Decision Trees}
\label{sec:dectrees}
In this section, we explain the basic ideas behind decision trees
and notation which will be used throughout the rest of the paper.
For more information about decision trees, we refer to the literature \cite{1985:lindley,2001:clemen}.

\subsection{Definition and Example}

A decision tree consists of a rooted tree \cite[p.~92, Sec.~3.2]{1999:gross::graph:theory} of decision nodes, chance nodes, and reward leaves, growing from
left to right. The left hand side corresponds to what happens first,
and the right hand side to what happens last. The payoffs for each
sequence of decisions and events are at the end.

Consider the following example. Tomorrow, a subject is going for
a walk in the lake district. It may rain ($E_1$), or
not ($E_2$). The subject can either take a waterproof ($d_1$), or
not ($d_2$), and can also choose to buy today's newspaper to learn tomorrow's
weather forecast ($d_S$), or not ($d_{\compl{S}}$). Suppose that the
forecast can have either two outcomes: predicting rain ($S_1$), or not
($S_2$).
The utility of each combination, if the subject does not buy the newspaper, is
summarized in Figure~\ref{fig:lake:tree:basic} (left).
If the subject buys the newspaper, then one utile is subtracted.

Figure~\ref{fig:lake:tree:basic} also depicts the decision tree.
\begin{figure}
\begin{minipage}[t]{0.25\linewidth}
\begin{center}
  {\small
  \begin{tabular}{r|cc}
    & $E_1$ & $E_2$ \\
    \hline
    $d_1$ & $10$ & $15$ \\
    $d_2$ & $5$ & $20$
  \end{tabular}
  }
\end{center}
\end{minipage}
\hfill
\begin{minipage}{0.7\linewidth}
  \begin{center}
    \begin{tikzpicture}
      [minimum size=2em,parent anchor=east,child anchor=west,grow'=east,scale=0.75,transform shape]
        \node[draw,rectangle]{$\decnode[1]$}
        [sibling distance=8em,level distance=5em]
        child{
          node[draw,circle]{$\chancenode[1](1)$}
          [sibling distance=8em,level distance=12em]
          child{
            node[draw,rectangle]{$\decnode[1](1)[1]$}
            [sibling distance=4em,level distance=5em]
            child{
              node[draw,circle]{$\chancenode[1](1)[1](1)$}
              [sibling distance=1.2em,level distance=5em]
              child{
                node{$9$}
                edge from parent
                node[above,sloped]{$E_1$}
              }
              child{
                node{$14$}
                edge from parent
                node[below,sloped]{$E_2$}
              }
              edge from parent
              node[above,sloped]{$d_1$}
            }
            child{
              node[draw,circle]{$\chancenode[1](1)[1](2)$}
              [sibling distance=1.2em,level distance=5em]
              child{
                node{$4$}
                edge from parent
                node[above,sloped]{$E_1$}
              }
              child{
                node{$19$}
                edge from parent
                node[below,sloped]{$E_2$}
              }
              edge from parent
              node[below,sloped]{$d_2$}
            }
            edge from parent
            node[above,sloped]{$S_1$}
          }
          child{
            node[draw,rectangle]{$\decnode[1](1)[2]$}
            [sibling distance=4em,level distance=5em]
            child{
              node[draw,circle]{$\chancenode[1](1)[2](1)$}
              [sibling distance=1.2em,level distance=5em]
              child{
                node{$9$}
                edge from parent
                node[above,sloped]{$E_1$}
              }
              child{
                node{$14$}
                edge from parent
                node[below,sloped]{$E_2$}
              }
              edge from parent
              node[above,sloped]{$d_1$}
            }
            child{
              node[draw,circle]{$\chancenode[1](1)[2](2)$}
              [sibling distance=1.2em,level distance=5em]
              child{
                node{$4$}
                edge from parent
                node[above,sloped]{$E_1$}
              }
              child{
                node{$19$}
                edge from parent
                node[below,sloped]{$E_2$}
              }
              edge from parent
              node[below,sloped]{$d_2$}
            }
            edge from parent
            node[below,sloped]{$S_2$}
          }
          edge from parent
          node[above,sloped]{$d_S$}
        }
        child{
          node[draw,rectangle]{$\decnode[1][2]$}
          [sibling distance=4em,level distance=5em]
          child{
            node[draw,circle]{$\chancenode[1][2](1)$}
            [sibling distance=1.2em,level distance=5em]
            child{
              node{$10$}
              edge from parent
              node[above,sloped]{$E_1$}
            }
            child{
              node{$15$}
              edge from parent
              node[below,sloped]{$E_2$}
            }
            edge from parent
            node[above,sloped]{$d_1$}
          }
          child{
            node[draw,circle]{$\chancenode[1][2](2)$}
            [sibling distance=1.2em,level distance=5em]
            child{
              node{$5$}
              edge from parent
              node[above,sloped]{$E_1$}
            }
            child{
              node{$20$}
              edge from parent
              node[below,sloped]{$E_2$}
            }
            edge from parent
            node[below,sloped]{$d_2$}
          }
          edge from parent
          node[below,sloped]{$d_{\compl{S}}$}
        };
    \end{tikzpicture}
  \end{center}
\end{minipage}
    \caption{Payoff table and decision tree for walking in the lake district.}
  \label{fig:lake:tree:basic}
\end{figure}
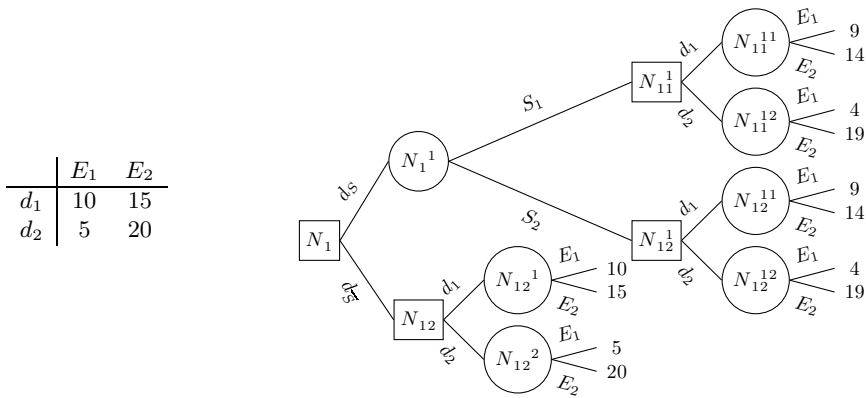
Squares are decision nodes, and circles are chance nodes.
From each node, branches emerge. For decision
nodes, these are decisions; for chance nodes, these are events. For each chance node, the events form a partition of the possibility space, contrary to Hammond~\cite[p.~31]{1988:hammond}, where events at chance nodes form a partition of the set of all states of nature still possible upon reaching the chance node. 
This leads to several technical differences, that will be seen in Definition~\ref{def:consistent:decision:trees}, Property~\ref{prop:conditioning:property}, and Property~\ref{prop:backward:conditioning:property}. Furthermore, our chance nodes are called \emph{natural nodes} by Hammond. Hammond's chance nodes involve probabilities and do not appear in our decision trees.

In our example, the
subject must first decide whether to buy the newspaper or
not, hence we start with a decision node. If he
buys the newspaper ($d_S$), then he learns about tomorrow's
forecast. Thus, the chance node following $d_S$
has two branches, forecasting rain ($S_1$), or no rain
($S_2$). Next, when leaving for the trip, he can
either bring his waterproof ($d_1$) or not ($d_2$), hence the
decision node following $S_1$. During the walk, a chance
node yields rain ($E_1$) or not ($E_2$).

So, each path in a decision tree amounts to a particular sequence
of decisions and events. The payoffs resulting from each such
sequence are put at the end.

\subsection{Notation}

Before elaborating how decision trees can be solved, we introduce a convenient mathematical notation for them. Decision trees can be seen as combinations of smaller decision trees: for instance, in the lake district example, one could draw the subtree corresponding to buying the newspaper, and also draw the subtree corresponding to making an immediate decision. The decision tree for the full problem is then formed by joining these two subtrees at a decision node.

Hence, we can represent a decision tree as follows. 
Let $\tree_1$, \dots, $\tree_n$ be decision trees and $\event[1]$, \dots, $\event[n]$ be a partition of the possibility space. If $\tree$ is formed by combining the trees at a decision node, we write
\begin{equation*}
  \tree = \bigdecnodeunion_{i=1}^n \tree_i.
\end{equation*}
If $\tree$ is formed by combining the trees at a chance node, with subtree $\tree_i$ being connected by event $\event[i]$, we write
\begin{equation*}
  \tree = \bigchancenodemixture_{i=1}^n \event[i] \tree_i.
\end{equation*}
For instance, for the tree of Fig.~\ref{fig:lake:tree:basic}, we write
\begin{equation*}
  (S_1(T_1\decnodeunion T_2)\chancenodemixture S_2(T_1\decnodeunion T_2))\decnodeunion(U_1\decnodeunion U_2)
\end{equation*}
with
\begin{align*}
  T_1&=E_1 9 \chancenodemixture E_2 14
  &
  U_1&=E_1 10 \chancenodemixture E_2 15
  &
  T_2&=E_1 4 \chancenodemixture E_2 19
  &
  U_2&=E_1 5 \chancenodemixture E_2 20
\end{align*}
In certain circumstances it will be necessary to consider decision trees formed by adding a decision node in front of a tree $\tree$, that is, a decision tree whose root is a decision node with one option. Such a tree shall be denoted by $\decnodeunion \tree$.

In this paper we shall often be considering subtrees of larger trees. For subtrees, we need to know the events that were observed in the past. Two subtrees with the same configuration of nodes and arcs may have different preceding events, and should be treated differently. Therefore we associate with every decision tree $\tree$ an event $\treeevent{\tree}$: the intersection of all the events on chance arcs that have preceded $\tree$. Hammond~\cite[p.~27]{1988:hammond} denotes these events by $S(n)$.

\begin{definition}\label{def:subtreeat}
 A subtree of a tree $\tree$ obtained by removal of all non-descendants of a particular node $N$ is called \emph{the subtree of $\tree$ at $N$} and is denoted by $\subtreeat{\tree}{N}$.
\end{definition}

Subtrees are called \emph{continuation trees} by Hammond \cite{1988:hammond}.

\section{Solving Decision Trees}
\label{sec:forms}

This paper deals with more general solutions of decision trees than are usually considered. Consequently, the usual definitions of extensive and normal forms, such as in Raiffa and Schlaifer \cite{1961:raiffa}, are insufficient. Therefore, we first carefully define normal and extensive form solutions.

\subsection{Extensive and Normal Form Solutions}
\label{sec:extensive:normal:form:solutions}

The usual definition of extensive form is based on backward induction and expected utility. At each ultimate decision node, we calculate the expected utility of each option, and choose a maximal arc, replacing that node by its maximum expected utility. The penultimate decision nodes have now become the ultimate ones, so we can repeat this process, until the root node.

If options are not assigned values, 
then this type of backward induction cannot be used. 
Therefore, we abandon the link with backward induction, and instead focus on another property of extensive form: the decision arc to follow only needs to be specified when the subject actually reaches the decision node.

An \emph{extensive form solution} of a decision tree removes from each decision node some (possibly none), but not all, of the decision arcs. So, an extensive form solution is a subtree of the original decision tree, where at each decision node a non-empty subset of arcs is retained. For instance, in the lake district example, one of the extensive form solutions is: do not buy the newspaper, and then either take the waterproof or not. An extensive form solution can be used as follows: the subject, upon reaching a decision node, chooses one of the arcs in the extensive form solution, and follows it. The subject only needs to decide which arc to follow at a decision node when reaching that node.


In contrast, normal form solutions specify all future decisions at the start, so no further decisions need to be taken as time progresses. Classically, normal form solutions involve listing all possible combinations of actions and then choosing one to maximize expected utility. This generalizes 
as follows.

First, an extensive form solution with just one arc out of each decision node, is called a \emph{normal form decision}. For instance, in the example, one of the normal form decisions is: buy the newspaper, and take the waterproof if the newspaper predicts rain, but do not take the waterproof otherwise. We denote the set of all normal form decisions for a decision tree $\tree$ by $\nfd(\tree)$. Normal form decisions are also called \emph{strategies}, \emph{pure strategies}, \emph{plans}, and \emph{policies}.


A \emph{normal form solution} of a decision tree $\tree$ is then simply a subset of $\nfd(\tree)$. The interpretation of this subset is that the subject picks one of the normal form decisions of the normal form solution, and then acts accordingly.

It is natural to ask whether solutions of one form can be meaningfully transformed into solutions of the other. This question is addressed in Section~\ref{sec:equivalence}. For now, note only that there are usually more normal form solutions than there are extensive form solutions. For example, consider the tree below,
\begin{wrapfigure}[7]{r}{0.2\linewidth}
  \begin{center}
    \begin{tikzpicture}
      [minimum size=2em,parent anchor=east,child anchor=west,grow'=east,scale=0.75,transform shape]
	\node[draw,circle]{}
	[sibling distance=4em, level distance=4em]
	child{
		node[draw,rectangle]{}
	[sibling distance=1em, level distance=4em]
		child{
			node{$\reward(1)[1]$}
			edge from parent
			node[above,sloped]{$\decision(1)[1]$}
		}
		child{
			node{$\reward(1)[2]$}
			edge from parent
			node[below,sloped]{$\decision(1)[2]$}
		}
		edge from parent
		node[above,sloped]{}
	}
	child{
        node[draw,rectangle]{}
        [sibling distance=1em, level distance=4em]
		child{
			node{$\reward(2)[1]$}
			edge from parent
			node[above,sloped]{$\decision(2)[1]$}
		}
		child{
			node{$\reward(2)[2]$}
			edge from parent
			node[below,sloped]{$\decision(2)[2]$}
		}
		edge from parent
		node[below,sloped]{}
	};
\end{tikzpicture}
\end{center}
\end{wrapfigure}
and suppose a normal form solution contains two normal form decisions, namely $\decision(1)[1]\decision(2)[2]$ and $\decision(1)[2]\decision(2)[1]$. Any attempt to find a corresponding extensive form solution will have to include all four decision arcs, but that would also correspond to the normal form solution where all four normal form decisions are present. Therefore there is no one-to-one correspondence between extensive and normal form solutions.

These two forms of solution are not the only possibilities. For instance, McClennen \cite{1990:mcclennen} considers a form of solution where the subject must give a set of optimal normal form decisions \emph{at every node in the tree}. The interpretation is apparently that the subject chooses a plan every time he reaches a node. We shall call such a solution a \emph{dynamic normal form} solution. Our normal form solutions are special cases of dynamic normal form solutions, where the plan at any node is simply the restriction, to the node in question, of the plan at the root node (McClennen calls this a \emph{dynamically consistent} solution).

\subsection{Extensive and Normal Form Operators}
\label{sec:operators}

An \emph{extensive form operator} is a function which maps each decision tree to an extensive form solution of that decision tree. The method by which decision arcs are removed is not specified, and in particular, need not be related to backward induction. Hammond~\cite[p.~28]{1988:hammond} calls these operators \emph{behaviour norms}.

A \emph{normal form operator} maps each decision tree to a normal form solution of that tree. Again, the method by which this happens is not part of our definition.

These operators are usually 
interpreted as describing optimality.

Clearly, the classical extensive and normal form interpretations are, respectively, extensive and normal form operators. Classical backward induction deletes any arc with non-maximal expectation, and results in a tree with some arcs deleted: an extensive form solution. The classical normal form finds the set of normal form decisions with maximal expectation: a normal form solution.

\subsection{Gambles}

To express normal form decisions and solutions efficiently, we first introduce some definitions and notation. Let $\pspace$ be the \emph{possibility space}: the set of all possible states of the world. We only consider finite possibility spaces. Elements of
$\pspace$ are called typically denoted by $\omega$.
Subsets of $\pspace$ are called \emph{events}.
Let $\rewardset$ be a set of rewards. For our results we need \emph{not} assume $\rewardset=\mathbb{R}$. 

A \emph{gamble} is a function $X\colon \pspace\to\rewardset$, and is interpreted as an uncertain reward: once $\omega\in\pspace$ is observed, $X$ yields $X(\omega)$. Probabilities over $\pspace$ are \emph{not} needed.

\subsection{Normal Form Gambles}
\label{sec:normal:form:gambles}

Once a normal form decision is chosen, the reward is determined entirely by the events that obtain. So, every normal form decision has a corresponding gamble, which we call a \emph{normal form gamble}. The set of all normal form gambles associated with a decision tree $\tree$ is denoted by $\normgambles(\tree)$.

Using Fig.~\ref{fig:lake:tree:basic}, we explain how to find normal form gambles. First consider the subtree at $\decnode[1](1)[1]$, which has normal form decisions $d_1$ and $d_2$. The former gives reward $9$ utiles if $\omega\in E_1$ and $14$ utiles if $\omega\in E_2$, and so yields the gamble
\begin{equation}\label{eq:gambleexample}
E_1 9 \gambplus E_2 14.
\end{equation}
The $\gambplus$ operator combines partial maps defined on disjoint domains (i.e. the partial map $E_1 9$ defined on $E_1$, and the partial map $E_2 14$ defined on $E_2$).

Now consider the subtree with root at $\chancenode[1](1)$, and in particular the normal form decision `$d_1$ if $S_1$ and $d_2$ if $S_2$'. This gives reward $9$ if $\omega \in S_1 \cap E_1$, reward $14$ if $\omega \in S_1 \cap E_2$, and so on. The corresponding gamble is
\begin{equation*}
(S_1 \cap E_1) 9 \gambplus (S_1 \cap E_2) 14
\gambplus
(S_2 \cap E_1) 4 \gambplus (S_2 \cap E_2) 19,
\end{equation*}
or briefly, if we omit `$\cap$' and employ distributivity,
\begin{equation}\label{eq:normal:form:gamble:example}
S_1 \left( E_1 9 \gambplus E_2 14 \right)
\gambplus S_2 \left( E_1 4 \gambplus E_2 19 \right),
\end{equation}
where multiplication with an event is now understood to correspond to restriction, i.e., $9$ is a constant map on $\pspace$, $E_1 9$ is a constant map restricted to $E_1$, and $S_1 (E_1 9)$ is obtained from $E_1 9$ by further restriction to $E_1\cap S_1$. For illustration, we tabulate the values of some normal form gambles in Table~\ref{tab:normal:form:gambles:example}, where $\pspace=\{\omega_1, \omega_2, \omega_3, \omega_4\}$, $E_1 = \{\omega_1, \omega_2\}$, and $S_1 = \{\omega_1, \omega_3\}$.

\begin{table}
  \begin{center}
    {\small
    \begin{tabular}{r|cccc}
      & $\omega_1$ & $\omega_2$ & $\omega_3$ & $\omega_4$ \\
      \hline
      $E_1 9 \gambplus E_2 14$ & $9$ & $9$ & $14$ & $14$ \\
      $S_1 \left( E_1 9 \gambplus E_2 14 \right)
\gambplus S_2 \left( E_1 4 \gambplus E_2 19 \right)$ & $9$ & $4$ & $14$ & $19$
    \end{tabular}
    }
    \caption{Example of normal form gambles.}
    \label{tab:normal:form:gambles:example}
  \end{center}
\end{table}

Observe that the gamble in Eq.~\eqref{eq:normal:form:gamble:example} incorporates the gamble in Eq.~\eqref{eq:gambleexample}. 
This observation allows a very convenient recursive definition of the $\normgambles$ operator. 

\begin{definition}\label{def:gambsum:for:sets}
 For any events $\event_1$, \dots, $\event_n$ which form a partition, and any finite family of sets of gambles $\mathcal{X}_1$, \dots, $\mathcal{X}_n$, we define the following set of gambles:
\begin{equation}
 \label{eq:def:setsum}
 \biggambplus_{i=1}^n \event_i\mathcal{X}_i
 =
 \left\{\biggambplus_{i=1}^n \event_i X_i\colon X_i\in\mathcal{X}_i\right\}
\end{equation}
\end{definition}


\begin{definition}
 \label{def:normgambles}
 With any decision tree $\tree$, we associate a set of gambles $\normgambles(\tree)$, recursively defined through:
\begin{subequations}
\begin{itemize}
\item If a tree $\tree$ consists of only a leaf with reward $\reward\in\rewardset$, then
 \begin{equation}\label{eq:normgambles:rewards}
   \normgambles(\tree)=\{\reward\}.
 \end{equation}
\item If a tree $\tree$ has a chance node as root, that is, $\tree=\bigchancenodemixture_{i=1}^n E_i \tree_i$, then
\begin{equation}\label{eq:normgambles:chancenodes}
 \normgambles\left(\bigchancenodemixture_{i=1}^n E_i \tree_i\right)
 =
 \biggambplus_{i=1}^n \event_i\normgambles(\tree_i).
\end{equation}
\item If a tree $\tree$ has a decision node as root, that is, if $\tree=\bigdecnodeunion_{i=1}^n \tree_i$, then
\begin{equation}\label{eq:normgambles:decisionnodes}
 \normgambles\left(\bigdecnodeunion_{i=1}^n \tree_i\right)
 =
 \bigcup_{i=1}^n \normgambles(\tree_i).
\end{equation}
\end{itemize}
\end{subequations}
\end{definition}

It is easy to show that this definition gives indeed the required set,
\begin{equation}\label{eq:normgambles:tree:is:normgambles:nfd:tree}
  \normgambles(\tree) = \bigcup_{U\in\nfd(\tree)}\normgambles(U).
\end{equation}
Hammond uses $F(T,n)$ for $\normgambles(\subtreeat{\tree}{N})$. McClennen uses $G(\tree)$ for $\normgambles(\tree)$.

Multiple decision trees can model the same problem. This suggests the following definition (see for instance \cite{1987:lavalle,1989:machina}):

\begin{definition}
 Two decision trees $\tree_1$ and $\tree_2$ are called \emph{strategically equivalent} if $\normgambles(\tree_1)=\normgambles(\tree_2)$.
\end{definition}
Hammond~\cite[p.~38]{1988:hammond} uses the term \emph{consequentially equivalent}. 

\section{Normal Form Solutions for Decision Trees}
\label{sec:solution}

\subsection{Choice Functions and Optimality}
\label{sec:choice:functions}

We defined a normal form solution as a subset of all normal form decisions. Ideally one would like to identify a single best normal form decision, but this is not always possible. The subject might, however, still be able to eliminate some normal form decisions that he would never consider choosing, leaving a subset of normal form decisions. We say that the subject considers these as \emph{optimal}.

In classical decision theory, each normal form decision induces a random real-valued gain, and is considered optimal if its expected gain is maximized. As another example, suppose that 
a set $\domlinprevs$ of plausible probability distributions are specified. Then the subject might consider optimal all normal form decisions whose expected gain is maximal under at least one distribution in $\domlinprevs$.

So, often, optimal decisions are determined by comparison of gambles. We follow this common approach,
 since normal form decisions have corresponding gambles, and gambles are easier to work with. We therefore suppose that the subject can determine an optimal subset of any set of gambles, conditional upon an event $A$ (corresponding to $\treeevent{\tree}$ of the tree $\tree$ in question):

\begin{definition}\label{def:choice:function}
  A choice function $\opt$ maps, for any non-empty event $A$, each non-empty finite set $\mathcal{X}$ of gambles to a non-empty subset of this set:
  \begin{equation*}
    \emptyset \neq \opt(\mathcal{X}|A) \subseteq \mathcal{X}.
  \end{equation*}
\end{definition}

Note that common uses of choice functions in social choice theory, such as by Sen~\cite[p.~63, ll.~19--21]{1977:sen} do not consider conditioning, and define choice functions for arbitrary sets of options (not for gambles only).

\subsection{Normal Form Operator Induced by a Choice Function}

Now,
given a choice function $\opt$, we naturally arrive at a normal form operator $\normoper_{\opt}$, simply by applying $\opt$ on the set of all gambles associated with the tree $\tree$ and then finding the corresponding set of normal form decisions.

\begin{definition}\label{def:normoper}
  Given any choice function $\opt$, and any decision tree $\tree$ with $\treeevent{\tree}\neq\emptyset$, we define
\begin{equation*}
  \normoper_{\opt}(\tree)=\{\atree \in \nfd(\tree)\colon \normgambles(\atree)\subseteq \opt(\normgambles(\tree)|\treeevent{\tree})\}.
\end{equation*}
\end{definition}
Of course, since the $\atree$ are normal form decisions, $\normgambles(\atree)$ is always a singleton in this definition. In particular, the following important equality holds,
\begin{equation}\label{eq:gambofnormoptisopt}
  \normgambles(\normoper_{\opt}(\tree))=\opt(\normgambles(\tree)|\treeevent{\tree}).
\end{equation}

It follows immediately that 
$\normoper_{\opt}$ respects strategic equivalence:

\begin{theorem}\label{thm:normopt:strategically:equivalent}
  If $\tree_1$ and $\tree_2$ are strategically equivalent and $\treeevent{\tree_1}=\treeevent{\tree_2}\neq\emptyset$,
  then $
  \normgambles(\normoper_{\opt}(\tree_1)) = \normgambles(\normoper_{\opt}(\tree_2))
  $.
\end{theorem}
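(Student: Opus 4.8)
The plan is to derive the result directly from the key identity \eqref{eq:gambofnormoptisopt}, which already packages the relationship between $\normoper_{\opt}$ and the underlying choice function. Applying that identity to each tree gives $\normgambles(\normoper_{\opt}(\tree_i)) = \opt(\normgambles(\tree_i)\mid\treeevent{\tree_i})$ for $i\in\{1,2\}$, so the theorem reduces to checking that the two right-hand sides agree.

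The second step is to observe that both arguments fed to the choice function coincide. Strategic equivalence gives $\normgambles(\tree_1)=\normgambles(\tree_2)$ by definition, and the hypothesis supplies $\treeevent{\tree_1}=\treeevent{\tree_2}\neq\emptyset$. Since a choice function is a well-defined map on pairs consisting of a finite non-empty gamble set and a non-empty conditioning event, feeding it identical arguments yields identical outputs, so $\opt(\normgambles(\tree_1)\mid\treeevent{\tree_1})=\opt(\normgambles(\tree_2)\mid\treeevent{\tree_2})$, and the chain of equalities closes.

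There is essentially no obstacle here: the content has been absorbed into \eqref{eq:gambofnormoptisopt}, whose own verification is the only place any real work occurs. Should one prefer a self-contained argument that does not invoke that identity, I would instead unfold Definition~\ref{def:normoper} and note that a normal form decision $\atree$ lies in $\normoper_{\opt}(\tree_i)$ precisely when its (singleton) gamble set is contained in $\opt(\normgambles(\tree_i)\mid\treeevent{\tree_i})$. The only subtle point to check would then be that $\nfd(\tree_1)$ and $\nfd(\tree_2)$ realize exactly the same gambles via \eqref{eq:normgambles:tree:is:normgambles:nfd:tree}, so that it is the induced gamble sets, rather than the decision sets themselves, that must match. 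This is precisely why the conclusion is phrased in terms of $\normgambles$ applied to the operator output rather than in terms of $\normoper_{\opt}$ directly: two strategically equivalent trees can have genuinely different sets of normal form decisions, yet these must collapse to the same set of optimal gambles.
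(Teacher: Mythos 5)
Your proof is correct and matches the paper's intended argument exactly: the paper presents this theorem as an immediate consequence of Eq.~\eqref{eq:gambofnormoptisopt}, which is precisely the identity you invoke, with the two applications of $\opt$ agreeing because strategic equivalence and the hypothesis on $\treeevent{\cdot}$ make both arguments identical. Your closing remark, that the conclusion must be phrased via $\normgambles$ because strategically equivalent trees can have different sets of normal form decisions, correctly identifies why the theorem is stated this way.
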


This is of course an attractive property, as 
there are always many strategically equivalent trees representing the same problem: Theorem~\ref{thm:normopt:strategically:equivalent} guarantees that all equivalent representations yield the same solution.

When studying subtree perfectness, we consider $\normoper_{\opt}$ for arbitrary subtrees. To ensure that $\normoper_{\opt}$ can be applied on each of these, we need:
\begin{definition}\label{def:consistent:decision:trees}
  A decision tree $\tree$ is called \emph{consistent} if for every node $N$ of $\tree$,
  $$\treeevent{\subtreeat{\tree}{N}}\neq\emptyset.$$
\end{definition}
Clearly, if a decision tree $\tree$ is consistent, then for any node $N$ in $\tree$, $\subtreeat{\tree}{N}$ is also consistent. We study only consistent decision trees because we consider $\normoper_{\opt}(\subtreeat{\tree}{N})$ for any node $N$ in $\tree$, which is impossible when $\treeevent{\subtreeat{\tree}{N}}=\emptyset$.

Usually, one does not consider events which conflict with preceding events, hence consistency is satisfied. However, due to an oversight, some branch of a chance node might represent an event that cannot occur: such tree can always be made consistent by removing those nodes whose conditioning event is empty.

Not all sets of gambles can be represented by a consistent decision tree:
\begin{definition}\label{def:gambles:consistent}
  Let $A$ be any non-empty event, and let $\mathcal{X}$ be a non-empty finite set of gambles. Then the following conditions are equivalent; if any (hence all) of them are satisfied, we say that $\mathcal{X}$ is \emph{$A$-consistent}.
  \begin{enumerate}[label=(\Alph*)]
  \item\label{def:gambles:consistent:via:trees} There is a consistent decision tree $T$ with $\treeevent{\tree}=A$ and $\normgambles(\tree)=\mathcal{X}$.
  \item\label{def:gambles:consistent:via:inverse:map} For every $r\in\rewardset$ and every $X\in\mathcal{X}$ such that $X^{-1}(r)\neq\emptyset$, it holds that $X^{-1}(r)\cap A\neq\emptyset$.
  \end{enumerate}
  We will also say that a gamble $X$ is $A$-consistent whenever $\{X\}$ is $A$-consistent.
\end{definition}


\section{Subtree Perfectness}
\label{sec:counterfactuals}

We now define subtree perfectness, for both types of operator, and find necessary and sufficient conditions on $\opt$ for $\normoper_{\opt}$ to be subtree perfect. 

\subsection{Example and Definition}

First, we illustrate subtree perfectness by an example. Suppose we apply an extensive form operator to the tree $\tree$ in Fig.~\ref{fig:lake:tree:basic}. This operator will delete some (possibly none) of the decision arcs at $N=\decnode[1](1)[1]$. If the operator would delete the same arcs at $N$ regardless of the larger tree in which $\subtreeat{\tree}{N}$ is embedded, then the operator is subtree perfect. If the operator does not have this property (for instance, if the solution of $\tree$ after $N$ were to depend on consequences of $d_{\compl{S}}$ or $S_2$), then it fails subtree perfectness. Hammond~\cite[p.~34]{1988:hammond} calls subtree perfectness for behaviour norms (extensive form operators) \emph{consistency}.

The definition for subtree perfectness for a normal form operator requires the following extension to Definition~\ref{def:subtreeat}.
\begin{definition}
  If $\setoftrees$ is a set of decision trees and $N$ a node, then
  \begin{equation*}
    \subtreeat{\setoftrees}{N}=\{\subtreeat{\tree}{N}\colon\tree\in \setoftrees\text{ and }N\text{ in }\tree\}.
  \end{equation*}
\end{definition}

\begin{definition}
  An extensive form operator $\extoper$ is called \emph{subtree perfect} if for every consistent decision tree $\tree$ and every node $N$ such that $N$ is in $\extoper(\tree)$,
  \begin{equation*}
    \subtreeat{\extoper(\tree)}{N}=\extoper(\subtreeat{\tree}{N}).
  \end{equation*}
  A normal form operator $\normoper$ is called \emph{subtree perfect} if for every consistent decision tree $\tree$ and every node $N$ which is in at least one element of $\normoper(\tree)$,
  \begin{equation*}
    \subtreeat{\normoper(\tree)}{N}=\normoper(\subtreeat{\tree}{N}).
  \end{equation*}
\end{definition}

In other words, for a subtree perfect operator, it does not matter whether we first restrict to a subtree and then optimize, or first optimize and only then restrict to the subtree: subtree perfectness means that optimization and restriction commute, as in Fig.~\ref{fig:factual:commutativity}. 

\begin{figure}
 \begin{center}
  \begin{tikzpicture}
   \node (topleft) {$\tree$} [level distance = 8em] 
	child[parent anchor = east, child anchor = west, grow'=east]{
		node (topright) {$\subtreeat{\tree}{N}$}
		child[parent anchor = south, child anchor = north, grow'=south, level distance = 4em]
		{
			node (bottomright) {$\extoper(\subtreeat{\tree}{N})$}
			edge from parent[->]
			node[right]{optimise}
		}
		edge from parent[->]
		node[above]{restrict}
	}	
	child[parent anchor = south, child anchor = north, grow'=south]{
		node (bottomleft) {$\extoper(\tree)$}
		child[parent anchor = east, child anchor = west, grow'=east]{
			node(reallybottomright){$\subtreeat{\extoper(\tree)}{N}$}
			edge from parent[->]
			node[below]{restrict}
		}
		edge from parent[->]
		node[left]{optimise}
	};
\draw ([xshift=-2pt]reallybottomright.north)-- ([xshift=-2pt]bottomright.south);
\draw (reallybottomright) -- node[right]{if $N$ in $\extoper(\tree)$} (bottomright);

  \end{tikzpicture}
  \caption{For a subtree perfect extensive form operator, optimization and restriction commute.}
\label{fig:factual:commutativity}
 \end{center}
\end{figure}
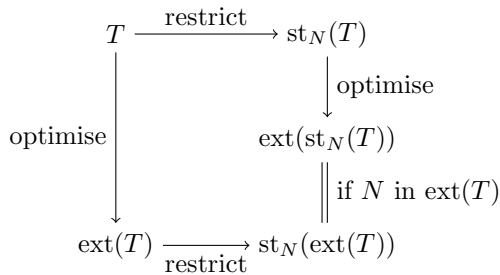

The extensive form operator $\extoper_P$ corresponding to the usual backward induction using expected utility is well known to be subtree perfect, provided probabilities at chance nodes are non-zero. Also, the usual normal form operator $\normoper_P$ corresponding to maximizing expected utility over all normal form decisions is subtree perfect, because $\extoper_P$ is equivalent to $\normoper_P$.

Not all choice functions are subtree perfect:

\begin{example}\label{ex:maximality:counterfactual}
  Let $\tree$ be the decision tree in Fig.~\ref{fig:maximality:counterfactual:tree}, where $X$, $Y$, and $Z$ are its normal form gambles. Under point-wise dominance, $X$ and $Y$ are incomparable, as are $Y$ and $Z$. Hence, $\normoper(\subtreeat{\tree}{N})$ is $\{X,Y\}$ (where we conveniently identified normal form decisions with their normal form gambles). But $\normoper(\tree) = \opt(\{X,Y,Z\})=\{Y,Z\}$ as clearly $Z$ dominates $X$. Restricting this solution to $\subtreeat{\tree}{N}$ gives the normal form solution $\{Y\}$. Concluding,
\begin{equation*}
  \{X,Y\}=\normoper(\subtreeat{\tree}{N})\neq\subtreeat{\normoper(\tree)}{N}=\{Y\}
\end{equation*}
and therefore the normal form operator induced by $\opt$ lacks subtree perfectness.
\end{example}

\begin{figure}
  \begin{center}
    \begin{tikzpicture}
      [minimum size=2em,parent anchor=east,child anchor=west,grow'=east,transform shape]
	\node at (-3,0){
    \small
    \begin{tabular}{c|cc}
      & $A$ & $\compl{A}$ \\
      \hline
      $X$ & $-1$ & $-1$ \\
      $Y$ & $-2$ & $2$ \\
      $Z$ & $0$ & $0$
   \end{tabular}
    };
	\node[draw,rectangle]{}
	[sibling distance = 2em, level distance = 4em]
		child{
			node[draw,rectangle]{$N$}
				[sibling distance = 2em]
			child{
				node[right]{$-1$}
			}
			child{
				node[draw,circle]{}
				child{
					node[right]{$-2$}
                    edge from parent
                    node[above,sloped]{$A$}
				}
				child{
					node[right]{$2$}
                    edge from parent
                    node[below,sloped]{$\compl{A}$}
				}
			}
		}
		child{
			node[right]{$0$}
		};
\end{tikzpicture}
\caption{Decision tree for Example~\ref{ex:maximality:counterfactual}.}
\label{fig:maximality:counterfactual:tree}
\end{center}
\end{figure}
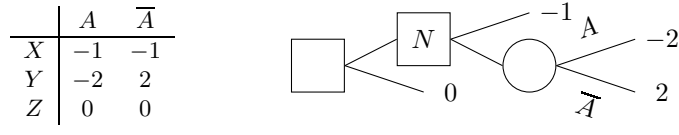

\subsection{Necessary and Sufficient Conditions}

In this section, we work extensively with normal form solutions, which are sets of trees. Therefore, we extend $\normgambles$, $\chancenodemixture$, and $\decnodeunion$, to sets of trees:
\begin{definition}
For any set of decision trees $\setoftrees$,
$
\normgambles(\setoftrees)=\bigcup_{\tree\in\setoftrees}\normgambles(\tree).
$
\end{definition}

\begin{definition}
For any sets of consistent decision trees $\setoftrees_1$, \dots, $\setoftrees_n$, and any partition $E_1$, \dots, $E_n$,
let
\begin{align*}
\textstyle
\bigchancenodemixture_{i=1}^n E_i \setoftrees_i
&=
\textstyle
\left\{\bigchancenodemixture_{i=1}^n E_i \tree_i\colon \tree_i \in \setoftrees_i\right\}, \\
\textstyle
\bigdecnodeunion_{i=1}^n \setoftrees_i
&=
\textstyle
\left\{\bigdecnodeunion_{i=1}^n \tree_i\colon\tree_i\in\setoftrees_i\right\}.
\end{align*}
\end{definition}

For sets of trees, the $\normgambles$ operator keeps working as expected:
\begin{align}\label{eq:normgambles:chancenodes:for:sets:of:trees}
\textstyle
  \normgambles\left(\bigchancenodemixture_{i=1}^n E_i \setoftrees_i\right)
  &=
\textstyle
  \biggambplus_{i=1}^n E_i \normgambles(\setoftrees_i),
\\
\label{eq:normgambles:decisionnodes:for:sets:of:trees}
\textstyle
 \normgambles\left(\bigdecnodeunion_{i=1}^n \setoftrees_i\right)
 &=
\textstyle
 \bigcup_{i=1}^n \normgambles(\setoftrees_i).
\end{align}
One should also observe that
$
  \normgambles(\tree)=\normgambles(\nfd(\tree)).
$

The next three properties turn out to be necessary and sufficient for subtree perfectness of normal form operators induced by a choice function.

\begin{property}[Conditioning Property]\label{prop:conditioning:property}
Let $A$ be a non-empty event, and let $\mathcal{X}$ be a non-empty finite $A$-consistent set of gambles, with $\{X,Y\}\subseteq\mathcal{X}$ such that $AX=AY$. If $X\in\opt(\mathcal{X}|A)$, then $Y\in\opt(\mathcal{X}|A)$.
\end{property}

This property is not found in the accounts of Hammond and McClennen, because their decision trees are different from ours. If their methods were adapted to our decision trees, then the conditioning property would appear in their work too. The property is more of a technical detail than an important point, since it states that, if two gambles are equal on an event, one gamble cannot be preferred to the other given that event. Such a property would be common and desirable for a choice function, regardless of its implications for subtree perfectness.

\begin{property}[Intersection property]\label{prop:intersection:property}
  For any event $A\neq\emptyset$ and non-empty finite $A$-consistent sets of gambles $\mathcal{X}$ and $\mathcal{Y}$ such that $\mathcal{Y}\subseteq\mathcal{X}$ and $\opt(\mathcal{X}|A)\cap\mathcal{Y}\neq\emptyset$,
  \begin{equation*}
    \opt(\mathcal{Y}|A) = \opt(\mathcal{X}|A)\cap\mathcal{Y}.
  \end{equation*}
\end{property}

The intersection property is precisely Arrow's (C4) \cite[p.~123]{1959:arrow}.

For the next property, we use the following extension of the $\gambplus$ notation: if $A$ is a non-trivial event (non-empty and not $\pspace$), then
\begin{equation*}
 A\mathcal{X} \gambplus \compl{A}Z = \{AX \gambplus \compl{A}Z \colon X\in\mathcal{X}\}.
\end{equation*}

\begin{property}[Mixture property]\label{prop:mixture:property}
  For any events $A$ and $B$ such that $A\cap B \neq \emptyset$ and $\compl{A}\cap B\neq \emptyset$, any $\compl{A}\cap B$-consistent gamble $Z$, and any non-empty finite $A\cap B$-consistent set of gambles $\mathcal{X}$,
  \begin{equation*}
    \opt(A\mathcal{X}\gambplus\compl{A}Z|B)=A\opt(\mathcal{X}|A\cap B)\gambplus\compl{A}Z.
  \end{equation*}
\end{property}

This property is a form of the well known independence principle (see for example \cite[p.~44]{1990:mcclennen}). It has strongest similarities to McClennen's independence for choice \cite[p.~57]{1990:mcclennen}, and Arrow's conditional preference \cite[p.~257]{1966:arrow}.

Property~\ref{prop:intersection:property} has a many equivalent formulations. The following three give interesting alternative interpretations, and are useful in some of the proofs.

\begin{property}[Strong path independence]\label{prop:strong:path:independence}
  For any non-empty event $A$ and any non-empty finite $A$-consistent sets of gambles $\mathcal{X}_1$, \dots ,$\mathcal{X}_n$, there is a non-empty $\mathcal{I}\subseteq\{1,\dots,n\}$ such that
  \begin{equation*}
    \opt\Bigg(\bigcup_{i=1}^n \mathcal{X}_i\Bigg|A\Bigg) = \bigcup_{i\in\mathcal{I}}\opt(\mathcal{X}_i|A)
  \end{equation*}
\end{property}

\begin{property}[Very strong path independence]\label{prop:very:strong:path:independence}
  For any non-empty event $A$ and any non-empty finite $A$-consistent sets of gambles $\mathcal{X}_1$, \dots ,$\mathcal{X}_n$,
  \begin{equation*}
    \opt\Bigg(\bigcup_{i=1}^n \mathcal{X}_i\Bigg|A\Bigg)
    =\bigcup_{
      \substack{
        i=1\\
        \mathcal{X}_i\cap\opt(\cup_{i=1}^n \mathcal{X}_i|A)\neq\emptyset
      }
    }^n
    \opt(\mathcal{X}_i|A)
  \end{equation*}
\end{property}

\begin{property}[Total preorder]\label{prop:total:order:property}
  For every event $A\neq\emptyset$, there is a total preorder $\succeq_A$ on $A$-consistent gambles such that for every non-empty finite set of $A$-consistent gambles $\mathcal{X}$,
  \begin{equation*}
    \opt(\mathcal{X}|A)=\{X\in\mathcal{X}\colon(\forall Y\in\mathcal{X})(X\succeq_A Y)\}
  \end{equation*}
\end{property}

The total preorder property essentially boils down to Arrow's (C5), which is also called the \emph{weak axiom of revealed preference} \cite[p.~123]{1959:arrow}. More equivalents are given by Arrow \cite[(C1)]{1959:arrow}, Houthakker \cite[p.~163]{1950:houthakker}, and Ville \cite[p.~123]{1951:ville}.

\begin{lemma}\label{lemma:strong:path:independence:equivalence}
  Properties~\ref{prop:intersection:property}, \ref{prop:strong:path:independence}, \ref{prop:very:strong:path:independence} and~\ref{prop:total:order:property} are equivalent.
\end{lemma}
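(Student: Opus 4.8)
The plan is to establish the four-way equivalence by the cycle Property~\ref{prop:total:order:property} $\Rightarrow$ Property~\ref{prop:very:strong:path:independence} $\Rightarrow$ Property~\ref{prop:strong:path:independence} $\Rightarrow$ Property~\ref{prop:intersection:property} $\Rightarrow$ Property~\ref{prop:total:order:property}. Throughout I would use that $A$-consistency is hereditary: by condition~\ref{def:gambles:consistent:via:inverse:map} of Definition~\ref{def:gambles:consistent}, a finite set of gambles is $A$-consistent exactly when each of its members is, so every subset, singleton, and finite union of $A$-consistent sets formed below is again $A$-consistent and lies in the domain of $\opt(\cdot|A)$. Two of the four implications are essentially immediate, and the real content sits in the remaining two.

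For Property~\ref{prop:total:order:property} $\Rightarrow$ Property~\ref{prop:very:strong:path:independence}, write $U=\bigcup_{i=1}^n\mathcal{X}_i$ and recall that $\opt(U|A)$ is the set of $\succeq_A$-maximal elements of $U$. If $\mathcal{X}_i$ meets $\opt(U|A)$ in some $W$, then $W$ is maximal in $U$, hence in $\mathcal{X}_i$; any $X\in\opt(\mathcal{X}_i|A)$ satisfies $X\succeq_A W$, and transitivity with $W\succeq_A Y$ for all $Y\in U$ shows $X$ is maximal in $U$, so $\opt(\mathcal{X}_i|A)\subseteq\opt(U|A)$. Conversely, each maximal element of $U$ lies in some $\mathcal{X}_i$ and is maximal there, giving the reverse inclusion; this is exactly the indexing of Property~\ref{prop:very:strong:path:independence}. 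Then Property~\ref{prop:very:strong:path:independence} $\Rightarrow$ Property~\ref{prop:strong:path:independence} is trivial: the index set $\{i:\mathcal{X}_i\cap\opt(U|A)\neq\emptyset\}$ is non-empty, since $\opt(U|A)$ is non-empty and every element of it lies in some $\mathcal{X}_i$, so it witnesses the existential claim.

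For Property~\ref{prop:strong:path:independence} $\Rightarrow$ Property~\ref{prop:intersection:property}, given $\mathcal{Y}\subseteq\mathcal{X}$ with $\opt(\mathcal{X}|A)\cap\mathcal{Y}\neq\emptyset$ (the case $\mathcal{Y}=\mathcal{X}$ being trivial), I would apply Property~\ref{prop:strong:path:independence} to the family consisting of $\mathcal{Y}$ together with the singletons $\{Z\}$ for $Z\in\mathcal{X}\setminus\mathcal{Y}$, whose union is $\mathcal{X}$. This expresses $\opt(\mathcal{X}|A)$ as a union, over a non-empty index set, of some of the terms $\opt(\mathcal{Y}|A)$ and $\{Z\}$. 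Since $\opt(\mathcal{X}|A)$ meets $\mathcal{Y}$ whereas each singleton term contributes only an element of $\mathcal{X}\setminus\mathcal{Y}$, the term $\opt(\mathcal{Y}|A)$ must occur, so $\opt(\mathcal{Y}|A)\subseteq\opt(\mathcal{X}|A)$ and hence $\opt(\mathcal{Y}|A)\subseteq\opt(\mathcal{X}|A)\cap\mathcal{Y}$; conversely, any element of $\opt(\mathcal{X}|A)\cap\mathcal{Y}$ can only arise from the $\opt(\mathcal{Y}|A)$ term, giving the reverse inclusion and thus $\opt(\mathcal{Y}|A)=\opt(\mathcal{X}|A)\cap\mathcal{Y}$.

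The main obstacle is Property~\ref{prop:intersection:property} $\Rightarrow$ Property~\ref{prop:total:order:property}, the classical fact that a choice function satisfying Arrow's condition is rationalised by a total preorder. For each $A$ I would define $X\succeq_A Y$ to mean $X\in\opt(\{X,Y\}|A)$; reflexivity and totality are immediate from $\opt(\{X\}|A)=\{X\}$ and non-emptiness of $\opt(\{X,Y\}|A)$. Transitivity is the delicate step: assuming $X\succeq_A Y$ and $Y\succeq_A Z$, I would examine $\opt(\{X,Y,Z\}|A)$ and apply Property~\ref{prop:intersection:property} to the pairs $\{X,Y\}$, $\{Y,Z\}$, $\{X,Z\}$ to show $X\in\opt(\{X,Y,Z\}|A)$ (a $Y$-only or $Z$-only optimum of the triple would, upon restriction, contradict $X\succeq_A Y$ or $Y\succeq_A Z$), and then restrict to $\{X,Z\}$ to conclude $X\succeq_A Z$. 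Finally the rationalisation formula follows from Property~\ref{prop:intersection:property} on the pairs $\{X,Y\}\subseteq\mathcal{X}$: membership of $X$ in $\opt(\mathcal{X}|A)$ forces $X\succeq_A Y$ for all $Y\in\mathcal{X}$, and conversely, if $X$ dominates every $Y\in\mathcal{X}$, comparing it via Property~\ref{prop:intersection:property} with any already-optimal $Z\in\opt(\mathcal{X}|A)$ places $X$ in $\opt(\mathcal{X}|A)$. I expect the bookkeeping in the transitivity argument---tracking which of the three elements survive in the optimal set of the triple---to require the most care.
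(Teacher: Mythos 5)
Your proof is correct, but it runs the equivalence cycle in the opposite direction from the paper's appendix proof. The paper proves Property~\ref{prop:intersection:property}$\implies$Property~\ref{prop:very:strong:path:independence} directly (write $\opt(\bigcup_i\mathcal{X}_i|A)=\bigcup_k\opt(\bigcup_i\mathcal{X}_i|A)\cap\mathcal{X}_k$ and apply the intersection property to each non-vacuous term), treats Property~\ref{prop:very:strong:path:independence}$\implies$Property~\ref{prop:strong:path:independence} as immediate, places the heavy case analysis in Property~\ref{prop:strong:path:independence}$\implies$Property~\ref{prop:total:order:property} (transitivity of $\succeq_A$ extracted from three applications of the path-independence decomposition of $\opt(\{X,Y,Z\}|A)$), and closes with a short direct proof of Property~\ref{prop:total:order:property}$\implies$Property~\ref{prop:intersection:property}. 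You instead prove Property~\ref{prop:total:order:property}$\implies$Property~\ref{prop:very:strong:path:independence} by a maximal-elements argument and close the cycle at Property~\ref{prop:intersection:property}$\implies$Property~\ref{prop:total:order:property}, thereby reproving from scratch the classical revealed-preference step, Arrow's (C4)$\iff$(C5), which the paper's main-text proof merely cites \cite[p.~124, Thm.~1]{1959:arrow}; your transitivity argument is the standard correct one, with one loose phrase: the optima of the triple to be excluded are all those omitting $X$, including $\{Y,Z\}$, not only the ``$Y$-only or $Z$-only'' ones, though that case dies by the same move (intersect with $\{X,Y\}$ to contradict $X\succeq_A Y$). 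Your Property~\ref{prop:strong:path:independence}$\implies$Property~\ref{prop:intersection:property} via the family $\{\mathcal{Y}\}\cup\{\{Z\}\colon Z\in\mathcal{X}\setminus\mathcal{Y}\}$ is genuine content for a step the paper's main text declares immediate (note that applying Property~\ref{prop:strong:path:independence} to the pair $\{\mathcal{Y},\mathcal{X}\}$ yields nothing, while the two-set family $\{\mathcal{Y},\mathcal{X}\setminus\mathcal{Y}\}$ would work as well as your singletons), and your opening remark that $A$-consistency is element-wise by Definition~\ref{def:gambles:consistent}\ref{def:gambles:consistent:via:inverse:map}, so all auxiliary sets remain in the domain of $\opt(\cdot|A)$, correctly supplies what the paper handles with a blanket assumption. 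In sum: the paper's route makes the total preorder emerge from path independence, whereas yours derives it from the intersection property directly; both are self-contained proofs of comparable difficulty, and yours mirrors the citation structure of the paper's two-line main-text proof.
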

\begin{proof}
  Property~\ref{prop:total:order:property}$\implies$Property~\ref{prop:very:strong:path:independence}$\implies$Property~\ref{prop:strong:path:independence}$\implies$Property~\ref{prop:intersection:property}. Immediate.
  Property~\ref{prop:intersection:property}$\implies$Property~\ref{prop:total:order:property}. See Arrow's (C4)$\iff$(C5) \cite[p.~124, Thm.~1]{1959:arrow}.
\end{proof}

To show that Properties~\ref{prop:conditioning:property}, \ref{prop:intersection:property} and~\ref{prop:mixture:property} are necessary and sufficient for subtree perfectness of $\normoper_{\opt}$, we require several lemmas. The proofs are long but mostly tedious and straightforward, and so are omitted.

For a decision tree $\tree$, $\children(\tree)$ is the set of child nodes of the root node of $\tree$. 

\begin{lemma}\label{lemma:factuality:at:children}
  Let $\normoper$ be any normal form operator. Let $\tree$ be a consistent decision tree. If,
  \begin{enumerate}[label=(\roman*)]
  \item\label{lemma:factuality:at:children:childcondition} for all nodes $K\in\children(\tree)$ such that $K$ is in at least one element of $\normoper(\tree)$,
    \begin{equation*}
      \subtreeat{\normoper(\tree)}{K}=\normoper(\subtreeat{\tree}{K}),
    \end{equation*}
  \item\label{lemma:factuality:at:children:grandchildcondition} and, for all nodes $K\in\children(\tree)$, and all nodes $L\in\subtreeat{\tree}{K}$ such that $L$ is in at least one element of $\normoper(\subtreeat{\tree}{K})$,
  \begin{equation*}
    \subtreeat{\normoper(\subtreeat{\tree}{K})}{L}=\normoper(\subtreeat{\subtreeat{\tree}{K}}{L}),
  \end{equation*}
  \end{enumerate}
  then, for all nodes $N$ in $\tree$ such that $N$ is in at least one element of $\normoper(\tree)$,
  \begin{equation*}
    \subtreeat{\normoper(\tree)}{N}=\normoper(\subtreeat{\tree}{N}).
  \end{equation*}
\end{lemma}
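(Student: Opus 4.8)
The plan is to reduce subtree perfectness at an arbitrary node $N$ to the two hypotheses, which between them cover exactly one level of the tree (the children $K$ of the root) and everything lying within each child's subtree. The single structural fact I would use repeatedly is the associativity of subtree restriction: if $N$ is a node of $\subtreeat{\tree}{K}$ then $\subtreeat{\subtreeat{\tree}{K}}{N}=\subtreeat{\tree}{N}$, and likewise $\subtreeat{\subtreeat{\setoftrees}{K}}{N}=\subtreeat{\setoftrees}{N}$ for a set of trees $\setoftrees$. Both follow because the unique path from the root to a descendant $N$ of $K$ necessarily passes through $K$, so restricting first to $K$ and then to $N$ discards exactly the non-descendants of $N$.

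First I would dispose of the trivial cases. If $N$ is the root of $\tree$, then $\subtreeat{\tree}{N}=\tree$ and restriction to $N$ is the identity, so both sides of the desired equation equal $\normoper(\tree)$. If $N\in\children(\tree)$, the conclusion is exactly hypothesis~(i). Hence I may assume $N$ is a proper descendant of some child $K\in\children(\tree)$, that is, $N$ is a node of $\subtreeat{\tree}{K}$ with $N\neq K$.

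Before chaining the hypotheses, I must check that their side conditions hold. We are given some $\atree\in\normoper(\tree)$ containing $N$. Since the path from the root of $\atree$ to $N$ passes through $K$, the node $K$ lies in $\atree$, so $K$ is in at least one element of $\normoper(\tree)$; this licenses hypothesis~(i), giving $\normoper(\subtreeat{\tree}{K})=\subtreeat{\normoper(\tree)}{K}$. The restriction $\subtreeat{\atree}{K}$ then belongs to $\subtreeat{\normoper(\tree)}{K}=\normoper(\subtreeat{\tree}{K})$ and still contains $N$, so $N$ is in at least one element of $\normoper(\subtreeat{\tree}{K})$; this in turn licenses hypothesis~(ii) with $L=N$.

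Finally I would chain the identities, starting from the right-hand side. Associativity gives $\normoper(\subtreeat{\tree}{N})=\normoper(\subtreeat{\subtreeat{\tree}{K}}{N})$; hypothesis~(ii) with $L=N$ rewrites this as $\subtreeat{\normoper(\subtreeat{\tree}{K})}{N}$; hypothesis~(i) replaces $\normoper(\subtreeat{\tree}{K})$ by $\subtreeat{\normoper(\tree)}{K}$, yielding $\subtreeat{\subtreeat{\normoper(\tree)}{K}}{N}$; and associativity for sets of trees collapses this to $\subtreeat{\normoper(\tree)}{N}$, which is the left-hand side. The argument is pure bookkeeping, and I expect the only point requiring care—rather than a genuine obstacle—to be verifying that the ``in at least one element'' side conditions of the two hypotheses are actually satisfied, which is precisely what the path-through-$K$ observations of the previous paragraph secure.
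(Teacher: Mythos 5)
Your proposal is correct and follows essentially the same route as the paper's own proof: dispose of the root and child cases, use the path-through-$K$ argument to verify that $K$ lies in an element of $\normoper(\tree)$ and (via hypothesis~(i)) that $N$ lies in an element of $\normoper(\subtreeat{\tree}{K})$, then chain hypothesis~(ii) with $L=N$, hypothesis~(i), and the associativity $\subtreeat{\subtreeat{\tree}{K}}{N}=\subtreeat{\tree}{N}$ in exactly the same order. No gaps; the side-condition checks you flag as the only delicate point are precisely the ones the paper verifies.
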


\begin{lemma}\label{lemma:opt:of:setsums:strong:equality}
  Let $A_1$, \dots, $A_n$ be a finite partition of $\pspace$, and let $B$ be an event such that $A_i\cap B\ne\emptyset$ for all $i$. Let $\mathcal{X}_1$, \dots, $\mathcal{X}_n$ be a finite family of non-empty finite sets of gambles, where $\mathcal{X}_i$ is $A_i\cap B$-consistent. If a choice function $\opt$ satisfies Properties~\ref{prop:intersection:property} and~\ref{prop:mixture:property}, then
  \begin{equation}\label{eq:opt:of:setsums:strong:equality}
    \opt\Bigg(\biggambplus_{i=1}^n A_i \mathcal{X}_i\Bigg|B\Bigg) = \biggambplus_{i=1}^n A_i \opt(\mathcal{X}_i | A_i\cap B).
  \end{equation}
\end{lemma}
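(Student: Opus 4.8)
The plan is to prove both inclusions by a ``one coordinate at a time'' argument that repeatedly invokes the binary mixture property (Property~\ref{prop:mixture:property}) on a single coordinate, while using the intersection property (Property~\ref{prop:intersection:property}) to transfer optimality in such a slice back to optimality in the full set. Write $\mathcal{Z}=\biggambplus_{i=1}^n A_i\mathcal{X}_i$; the case $n=1$ is trivial, so assume $n\geq 2$. For a fixed element $W=\biggambplus_{i=1}^n A_i X_i\in\mathcal{Z}$ and an index $j$, I consider the \emph{slice} obtained by varying only the $j$-th coordinate, $\mathcal{Z}_j(W)=A_j\mathcal{X}_j\gambplus\compl{A_j}Z$, where $Z$ is a gamble agreeing with $\biggambplus_{i\neq j}A_i X_i$ on $\compl{A_j}$. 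Then $\mathcal{Z}_j(W)\subseteq\mathcal{Z}$ and $W\in\mathcal{Z}_j(W)$. First I would record that $\mathcal{Z}$, and hence every slice, is $B$-consistent: for $W\in\mathcal{Z}$ and $r\in\rewardset$ with $W^{-1}(r)\neq\emptyset$ some $A_i\cap X_i^{-1}(r)\neq\emptyset$, and $A_i\cap B$-consistency of $\mathcal{X}_i$ then gives $W^{-1}(r)\cap A_i\cap B\neq\emptyset$.

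For the inclusion $\subseteq$, I take $W=\biggambplus_i A_i X_i\in\opt(\mathcal{Z}|B)$. Since $W\in\opt(\mathcal{Z}|B)\cap\mathcal{Z}_j(W)$, the intersection property gives $\opt(\mathcal{Z}_j(W)|B)=\opt(\mathcal{Z}|B)\cap\mathcal{Z}_j(W)\ni W$, while the mixture property with $A=A_j$ gives $\opt(\mathcal{Z}_j(W)|B)=A_j\opt(\mathcal{X}_j|A_j\cap B)\gambplus\compl{A_j}Z$. Comparing, $W$ agrees on $A_j$ with some element of $A_j\opt(\mathcal{X}_j|A_j\cap B)\gambplus\compl{A_j}Z$, so $X_j$ may be taken in $\opt(\mathcal{X}_j|A_j\cap B)$. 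Carrying this out for every $j$ exhibits a representation of $W$ with each coordinate optimal, whence $W\in\biggambplus_i A_i\opt(\mathcal{X}_i|A_i\cap B)$.

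For $\supseteq$, I fix a target $W=\biggambplus_i A_i X_i$ with each $X_i\in\opt(\mathcal{X}_i|A_i\cap B)$, and start from any $W^{*}\in\opt(\mathcal{Z}|B)$ (non-empty by definition of a choice function). I then transform $W^{*}$ into $W$ one coordinate at a time, keeping each intermediate gamble in $\opt(\mathcal{Z}|B)$: given a gamble $V\in\opt(\mathcal{Z}|B)$ already matching $W$ on coordinates $1,\dots,j-1$, I apply the intersection property to $\mathcal{Z}_j(V)$ (valid since $V\in\opt(\mathcal{Z}|B)\cap\mathcal{Z}_j(V)$) and then the mixture property to obtain $\opt(\mathcal{Z}_j(V)|B)=A_j\opt(\mathcal{X}_j|A_j\cap B)\gambplus\compl{A_j}Z\subseteq\opt(\mathcal{Z}|B)$; since $X_j\in\opt(\mathcal{X}_j|A_j\cap B)$, the gamble obtained from $V$ by replacing its $j$-th coordinate with $X_j$ lies in $\opt(\mathcal{Z}|B)$ and matches $W$ on coordinates $1,\dots,j$. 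After $n$ steps the result is $W\in\opt(\mathcal{Z}|B)$.

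The step I expect to be the main obstacle is not conceptual but the consistency bookkeeping needed to license each application of the mixture property. To invoke Property~\ref{prop:mixture:property} with $A=A_j$ I must check $A_j\cap B\neq\emptyset$ (given), $\compl{A_j}\cap B=\bigcup_{i\neq j}(A_i\cap B)\neq\emptyset$ (this is where $n\geq 2$ enters), that $\mathcal{X}_j$ is $A_j\cap B$-consistent (given), and, most delicately, that the fixed part $Z$ is $\compl{A_j}\cap B$-consistent. The restriction $\biggambplus_{i\neq j}A_i X_i$ is $\compl{A_j}\cap B$-consistent by the computation of the first paragraph, but $\compl{A_j}\cap B$-consistency of $Z$ as a gamble on all of $\pspace$ also constrains the values of $Z$ on $A_j$, which are irrelevant to the slice. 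I would remove this nuisance by taking $Z$ constant on $A_j$ equal to a value already attained by $\biggambplus_{i\neq j}A_i X_i$ somewhere on $\compl{A_j}\cap B$ (possible since $n\geq 2$), so that $Z$ introduces no reward off $\compl{A_j}\cap B$ and is therefore $\compl{A_j}\cap B$-consistent.
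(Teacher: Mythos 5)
Your proof is correct, and it reaches the result by a genuinely different gluing mechanism than the paper's. The paper's proof fixes a coordinate $k$ and writes $\mathcal{X}=\biggambplus_{i=1}^n A_i\mathcal{X}_i$ as the union over $Z_k\in\mathcal{Z}_k$ of the slices $A_k\mathcal{X}_k\gambplus\compl{A}_kZ_k$, where $\mathcal{Z}_k$ is a whole \emph{set} of total gambles built from an auxiliary partition $(A'_j)_{j\neq k}$ extending the $A_j$ over $A_k$ (so that $\compl{A}_k\cap B$-consistency of $\mathcal{Z}_k$ comes for free from a tree construction); it then invokes strong path independence (Property~\ref{prop:strong:path:independence}, equivalent to Property~\ref{prop:intersection:property} by Lemma~\ref{lemma:strong:path:independence:equivalence}) to reduce the union to slices meeting the optimal set, applies Property~\ref{prop:mixture:property} slice-wise to obtain the set identity $\opt(\mathcal{X}|B)=A_k\opt(\mathcal{X}_k|A_k\cap B)\gambplus\compl{A}_k\mathcal{Z}_k^*$ for every $k$, and finally glues these $n$ identities via the separate combinatorial Lemma~\ref{lemma:setsums:equality} (an induction on $n$). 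You work elementwise instead: the same one-coordinate slices, but with the fixed part a single gamble determined by the element at hand, Property~\ref{prop:intersection:property} applied directly (each slice contains the relevant optimal element, so $\opt(\mathcal{Z}_j|B)=\opt(\mathcal{Z}|B)\cap\mathcal{Z}_j$), and the gluing replaced by per-coordinate extraction for $\subseteq$ and a coordinate-by-coordinate exchange chain for $\supseteq$ that remains inside $\opt(\mathcal{Z}|B)$ at every step. What this buys you is a more elementary and self-contained argument: no detour through the path-independence equivalences and no analogue of Lemma~\ref{lemma:setsums:equality}. The cost is exactly the consistency bookkeeping you flagged: the paper never pads a single fixed gamble on $A_j$, while you must, and correctly do, set $Z$ on $A_j$ to a reward value already attained by the fixed part on $\compl{A_j}\cap B$, so that $Z$ violates no instance of Definition~\ref{def:gambles:consistent}\ref{def:gambles:consistent:via:inverse:map} (the elementwise $A_i\cap B$-consistency of the $\mathcal{X}_i$ then handles all remaining values, just as in your $B$-consistency computation for $\mathcal{Z}$). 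Both arguments need $n\ge 2$ for $\compl{A_j}\cap B\neq\emptyset$, use exactly Properties~\ref{prop:intersection:property} and~\ref{prop:mixture:property}, and are equal in strength; your $\subseteq$ direction is in fact close in spirit to the paper's elementwise Lemma~\ref{lemma:setsums:subseteq} used in the backward-induction half, where only inclusions rather than the full set identities are available.
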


\begin{lemma}\label{lemma:I:Istar:equality}
  Consider a consistent decision tree $\tree$ whose root is a decision node, so $\tree=\bigdecnodeunion_{i=1}^n \tree_i$, and any choice function $\opt$. For each tree $\tree_i$, let $N_i$ be its root. Then, $N_i$ is in at least one element of $\normoper_{\opt}(\tree)$ if and only if
  \begin{equation}\label{eq:lemma:I:Istar:equality}
    \normgambles(\tree_i) \cap \opt(\normgambles(\tree)|\treeevent{T})\neq\emptyset.
  \end{equation}
\end{lemma}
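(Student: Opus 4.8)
The plan is to unpack both sides of the equivalence in terms of the single gamble carried by a normal form decision, reducing everything to the recursive behaviour of $\normgambles$ and $\nfd$ at a decision-node root. The one structural fact I would isolate first is this: since the root of $\tree$ is a decision node, the normal form decisions of $\tree$ that contain the child $N_i$ are exactly the trees $\decnodeunion V$ with $V\in\nfd(\tree_i)$, and prepending the single root arc does not change the associated gamble, so $\normgambles(\decnodeunion V)=\normgambles(V)$. This is just Eq.~\eqref{eq:normgambles:decisionnodes} applied with a one-option decision node at the top, combined with the union formula there. In particular, a normal form decision $U\in\nfd(\tree)$ contains $N_i$ if and only if its (singleton) gamble lies in $\normgambles(\tree_i)$, and every element of $\normgambles(\tree_i)$ is realised by such a $U$ by Eq.~\eqref{eq:normgambles:tree:is:normgambles:nfd:tree}. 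Note that no discussion of the conditioning event is needed, because both $\normoper_{\opt}(\tree)$ (through Definition~\ref{def:normoper}) and the right-hand side of \eqref{eq:lemma:I:Istar:equality} are evaluated at the very same event $\treeevent{\tree}$.

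For the forward direction, I would suppose $N_i$ lies in some $U\in\normoper_{\opt}(\tree)$. By Definition~\ref{def:normoper}, $\normgambles(U)\subseteq\opt(\normgambles(\tree)|\treeevent{\tree})$, and since $U$ is a normal form decision this set is a singleton, say $\{X\}$. Because $U$ contains $N_i$, the structural fact above gives $X\in\normgambles(\tree_i)$, whence $X\in\normgambles(\tree_i)\cap\opt(\normgambles(\tree)|\treeevent{\tree})$, so this intersection is non-empty and \eqref{eq:lemma:I:Istar:equality} holds.

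For the converse, I would take any gamble $X\in\normgambles(\tree_i)\cap\opt(\normgambles(\tree)|\treeevent{\tree})$. By Eq.~\eqref{eq:normgambles:tree:is:normgambles:nfd:tree} there is a normal form decision $V\in\nfd(\tree_i)$ with $\normgambles(V)=\{X\}$; then $U=\decnodeunion V$ is a normal form decision of $\tree$ that contains $N_i$ and satisfies $\normgambles(U)=\{X\}$. Since $X\in\opt(\normgambles(\tree)|\treeevent{\tree})$, Definition~\ref{def:normoper} yields $U\in\normoper_{\opt}(\tree)$, so $N_i$ is in at least one element of $\normoper_{\opt}(\tree)$.

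The argument is essentially bookkeeping in the definitions, so I do not expect a genuine obstacle. The only point that demands care is the claim that attaching (or removing) the single root arc sets up a gamble-preserving bijection between the normal form decisions of $\tree$ passing through $N_i$ and those of $\tree_i$ — that is, that the one-option decision node contributes nothing beyond the union in Eq.~\eqref{eq:normgambles:decisionnodes}. Once that correspondence is stated cleanly, both implications are immediate.
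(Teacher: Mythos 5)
Your proof is correct and takes essentially the same route as the paper's: both arguments reduce the claim, via Eq.~\eqref{eq:normgambles:tree:is:normgambles:nfd:tree} and the identity $\normgambles(\decnodeunion U)=\normgambles(U)$, to the gamble-preserving correspondence between normal form decisions of $\tree$ passing through $N_i$ and those of $\tree_i$, and then conclude with Definition~\ref{def:normoper}. The only difference is presentational: the paper writes the argument as one chain of equivalences, whereas you split it into two implications.
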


\begin{lemma}\label{lemma:gamble:normopt:equivalence:chance:nodes}
  For any consistent decision tree $\tree=\bigchancenodemixture_{i=1}^n E_i\tree_i$, and any choice function $\opt$ satisfying Property~\ref{prop:conditioning:property},
  \begin{equation}\label{lemma:gamble:normopt:equivalence:chance:nodes:helper}
    \normgambles(\normoper_{\opt}(\tree)) = \biggambplus_{i=1}^n E_i \normgambles(\normoper_{\opt}(\tree_i))
  \end{equation}
  implies
  \begin{equation*}
    \normoper_{\opt}(\tree) = \bigchancenodemixture_{i=1}^n E_i \normoper_{\opt}(\tree_i).
  \end{equation*}
\end{lemma}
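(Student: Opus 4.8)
The plan is to prove the two sets of normal form decisions are equal by establishing both inclusions. Throughout I write $A=\treeevent{\tree}$ and note that $\treeevent{\tree_i}=A\cap E_i$, which is non-empty because $\tree$ is consistent. Every element of $\nfd(\tree)$ has the form $\bigchancenodemixture_{i=1}^n E_i\atree_i$ with $\atree_i\in\nfd(\tree_i)$, and by the recursion at a chance node, Eq.~\eqref{eq:normgambles:chancenodes}, its single normal form gamble is $\biggambplus_{i=1}^n E_i X_i$ where $\{X_i\}=\normgambles(\atree_i)$. I will repeatedly use the identity $\normgambles(\normoper_{\opt}(U))=\opt(\normgambles(U)|\treeevent{U})$ of Eq.~\eqref{eq:gambofnormoptisopt}. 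Applying it to $\tree$ and to each $\tree_i$, the hypothesis rewrites as $\opt(\normgambles(\tree)|A)=\biggambplus_{i=1}^n E_i\opt(\normgambles(\tree_i)|A\cap E_i)$, which is the form I actually use.

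For the inclusion $\supseteq$, take $\atree=\bigchancenodemixture_{i=1}^n E_i\atree_i$ with each $\atree_i\in\normoper_{\opt}(\tree_i)$ and set $\{X_i\}=\normgambles(\atree_i)$. Then each $X_i\in\opt(\normgambles(\tree_i)|A\cap E_i)$, so by Definition~\ref{def:gambsum:for:sets} and the rewritten hypothesis, $\biggambplus_{i=1}^n E_i X_i\in\opt(\normgambles(\tree)|A)$. Since this gamble is exactly $\normgambles(\atree)$, we conclude $\atree\in\normoper_{\opt}(\tree)$. This inclusion uses only the hypothesis and not Property~\ref{prop:conditioning:property}.

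The substantive inclusion is $\subseteq$, and this is where the Conditioning Property enters. Take $\atree\in\normoper_{\opt}(\tree)$, write $\atree=\bigchancenodemixture_{i=1}^n E_i\atree_i$ with $\{X_i\}=\normgambles(\atree_i)$, so that the gamble $X=\biggambplus_{i=1}^n E_i X_i$ of $\atree$ lies in $\opt(\normgambles(\tree)|A)$. By the rewritten hypothesis there are $Y_i\in\opt(\normgambles(\tree_i)|A\cap E_i)$ with $X=\biggambplus_{i=1}^n E_i Y_i$. The main obstacle is that $X_i$ need not coincide with this optimal $Y_i$: membership only guarantees the two decompositions of $X$ agree as gambles, not that the chosen components match off $E_i$. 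But comparing the two decompositions and using that the $E_i$ partition $\pspace$ forces $E_i X_i=E_i Y_i$, hence $(A\cap E_i)X_i=(A\cap E_i)Y_i$. Now $\normgambles(\tree_i)$ is $(A\cap E_i)$-consistent by Definition~\ref{def:gambles:consistent} (witnessed by the tree $\tree_i$ itself), both $X_i$ and $Y_i$ belong to it, and $Y_i$ is optimal; Property~\ref{prop:conditioning:property} then yields $X_i\in\opt(\normgambles(\tree_i)|A\cap E_i)=\normgambles(\normoper_{\opt}(\tree_i))$, so $\atree_i\in\normoper_{\opt}(\tree_i)$ for every $i$. Hence $\atree\in\bigchancenodemixture_{i=1}^n E_i\normoper_{\opt}(\tree_i)$, and the two inclusions together give the claim.
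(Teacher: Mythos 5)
Your proof is correct and follows essentially the same route as the paper's: the easy inclusion via the hypothesis and Eq.~\eqref{eq:gambofnormoptisopt}, and the harder inclusion by decomposing the optimal decision componentwise, matching each $X_i$ against an optimal $Y_i$ agreeing on $E_i$ (hence on $\treeevent{\tree_i}$), and invoking Property~\ref{prop:conditioning:property}. Your explicit step from $E_iX_i=E_iY_i$ to $(A\cap E_i)X_i=(A\cap E_i)Y_i$ is a small clarification the paper leaves implicit, but the argument is the same.
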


\begin{lemma}\label{lemma:gamble:normopt:equivalence:decision:nodes}
  For any consistent decision tree $\tree=\bigdecnodeunion_{i=1}^n\tree_i$ and any choice function $\opt$ satisfying Property~\ref{prop:intersection:property},
  \begin{equation}\label{lemma:gamble:normopt:equivalence:decision:nodes:helper}
    \normgambles(\normoper_{\opt}(\tree)) = \bigcup_{i\in\mathcal{I}} \normgambles(\normoper_{\opt}(\tree_i))
  \end{equation}
  implies
  \begin{equation*}
    \normoper_{\opt}(\tree) = \nfd\Bigg(\bigdecnodeunion_{i\in\mathcal{I}} \normoper_{\opt}(\tree_i)\Bigg),
  \end{equation*}
  where $\mathcal{I}=\{i\in\{1,\dots,n\}\colon\normgambles(\tree_i) \cap \opt(\normgambles(\tree)|\treeevent{T})\neq\emptyset\}$.
\end{lemma}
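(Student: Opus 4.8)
The plan is to reduce the asserted equality of normal form solutions to a branch-by-branch equivalence between membership in $\normoper_{\opt}(\tree)$ and membership in $\normoper_{\opt}(\tree_j)$, with the intersection property as the bridge. Throughout I write $A=\treeevent{\tree}$; since the root of $\tree$ is a decision node, no chance arc separates $\tree$ from its children, so $\treeevent{\tree_i}=A$ for every $i$. I would first record the structural facts that $\normgambles(\tree)=\bigcup_{i=1}^n\normgambles(\tree_i)$ by Eq.~\eqref{eq:normgambles:decisionnodes}, that each $\tree_i$ is consistent with $\treeevent{\tree_i}=A$ and hence $\normgambles(\tree_i)$ is $A$-consistent (Definition~\ref{def:gambles:consistent}, condition~\ref{def:gambles:consistent:via:trees}), and that $\normgambles(\tree)$ itself is $A$-consistent.

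Next I would unwind the right-hand side. Because each $\normoper_{\opt}(\tree_i)$ is non-empty, the set $\bigdecnodeunion_{i\in\mathcal{I}}\normoper_{\opt}(\tree_i)$ consists exactly of the trees $\bigdecnodeunion_{i\in\mathcal{I}}U_i$ obtained by choosing one $U_i\in\normoper_{\opt}(\tree_i)$ on each branch $i\in\mathcal{I}$; applying $\nfd$ to such a tree selects a single branch, and taking the union over all choices collapses the expression to $\nfd(\bigdecnodeunion_{i\in\mathcal{I}}\normoper_{\opt}(\tree_i))=\bigcup_{i\in\mathcal{I}}\normoper_{\opt}(\tree_i)$, where a normal form decision of $\tree_j$ is identified with the normal form decision of $\tree$ that takes branch $j$ and then follows it. The goal thus becomes $\normoper_{\opt}(\tree)=\bigcup_{i\in\mathcal{I}}\normoper_{\opt}(\tree_i)$.

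The core step is to invoke Property~\ref{prop:intersection:property} once per branch $j\in\mathcal{I}$, with $\mathcal{X}=\normgambles(\tree)$ and $\mathcal{Y}=\normgambles(\tree_j)$: the inclusion $\mathcal{Y}\subseteq\mathcal{X}$ holds by the structural fact above, and $\opt(\mathcal{X}|A)\cap\mathcal{Y}\neq\emptyset$ holds precisely because $j\in\mathcal{I}$, so the property yields $\opt(\normgambles(\tree_j)|A)=\opt(\normgambles(\tree)|A)\cap\normgambles(\tree_j)$. I would then assemble the two inclusions. For $\subseteq$: any $\atree\in\normoper_{\opt}(\tree)$ has its (singleton) gamble $X\in\opt(\normgambles(\tree)|A)$; if $\atree$ follows branch $j$ then $X\in\normgambles(\tree_j)$, so $j\in\mathcal{I}$, and the displayed identity gives $X\in\opt(\normgambles(\tree_j)|A)$, i.e.\ $\atree\in\normoper_{\opt}(\tree_j)$. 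For $\supseteq$: any $\atree\in\normoper_{\opt}(\tree_j)$ with $j\in\mathcal{I}$ has $X\in\opt(\normgambles(\tree_j)|A)=\normgambles(\normoper_{\opt}(\tree_j))$, whence the hypothesised identity $\normgambles(\normoper_{\opt}(\tree))=\bigcup_{i\in\mathcal{I}}\normgambles(\normoper_{\opt}(\tree_i))$ places $X$ in $\normgambles(\normoper_{\opt}(\tree))=\opt(\normgambles(\tree)|A)$ by Eq.~\eqref{eq:gambofnormoptisopt}, so $\atree\in\normoper_{\opt}(\tree)$. Combining the two inclusions gives the required equality.

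The main obstacle I anticipate is bookkeeping rather than genuine difficulty: one must keep the identification of a normal form decision of $\tree_j$ with the corresponding branch-$j$ normal form decision of $\tree$ completely precise, and carefully justify the collapse of $\nfd(\bigdecnodeunion_{i\in\mathcal{I}}\normoper_{\opt}(\tree_i))$ to $\bigcup_{i\in\mathcal{I}}\normoper_{\opt}(\tree_i)$ (here non-emptiness of each $\normoper_{\opt}(\tree_i)$ is exactly what lets the multi-branch trees be reassembled with neither spurious nor missing decisions). The verification of the hypotheses of Property~\ref{prop:intersection:property} is then routine once $A=\treeevent{\tree}=\treeevent{\tree_i}$ is noted; indeed the hypothesised gamble identity is itself just Property~\ref{prop:very:strong:path:independence}, which is equivalent to Property~\ref{prop:intersection:property} by Lemma~\ref{lemma:strong:path:independence:equivalence}, so the intersection property alone already secures both the hypothesis and the conclusion.
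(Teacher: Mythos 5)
Your proposal is correct and takes essentially the same route as the paper's proof: both directions rest on the hypothesised gamble identity together with Property~\ref{prop:intersection:property} applied with $\mathcal{X}=\normgambles(\tree)$ and $\mathcal{Y}=\normgambles(\tree_j)$ (the nonemptiness needed for the property coming exactly from $j\in\mathcal{I}$), and your identification of $U\in\nfd(\tree_j)$ with the branch-$j$ decision of $\tree$ is precisely the paper's $V=\decnodeunion U$ bookkeeping. Your closing remark, that the hypothesis itself already follows from Property~\ref{prop:very:strong:path:independence} via Lemma~\ref{lemma:strong:path:independence:equivalence}, matches how the paper deploys this lemma inside Theorem~\ref{thm:conditions:to:be:factual}.
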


The next lemma shows necessity of Properties~\ref{prop:conditioning:property}, \ref{prop:intersection:property}, and~\ref{prop:mixture:property} for subtree perfectness. Interestingly, the proof only involves the two decision trees in Figure~\ref{fig:conditioning:intersection:mixture:are:necessary:for:factuality}.

\begin{lemma}\label{lemma:conditioning:intersection:mixture:are:necessary:for:factuality}
  If $\normoper_{\opt}$ is subtree perfect, then $\opt$ satisfies Properties~\ref{prop:conditioning:property}, \ref{prop:intersection:property}, and~\ref{prop:mixture:property}.
\end{lemma}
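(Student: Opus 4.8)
The plan is to realise the abstract choice data appearing in each property as the normal form gambles of concrete consistent decision trees, and then read each property off from subtree perfectness. Throughout I use that, by~\eqref{eq:gambofnormoptisopt}, $\normgambles(\normoper_{\opt}(\tree))=\opt(\normgambles(\tree)|\treeevent{\tree})$, and that any $A$-consistent set of gambles is, by Definition~\ref{def:gambles:consistent}, equal to $\normgambles(\tree)$ for some consistent tree $\tree$ with $\treeevent{\tree}=A$. The device I need repeatedly is to \emph{manufacture the conditioning event}: to force a choice to happen conditional on an event $A$, I hang the relevant subtree below a chance arc labelled $A$, so that the preceding-event of its root becomes exactly $A$. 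Only two shapes of tree are needed---a chance-node construction (for the conditioning and mixture properties) and a decision-node construction (for the intersection property)---matching the two trees in Figure~\ref{fig:conditioning:intersection:mixture:are:necessary:for:factuality}.

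For the \emph{conditioning property} I may assume $A\neq\pspace$ (otherwise $AX=AY$ forces $X=Y$ and the claim is trivial). Pick any $z\in\rewardset$, take a consistent $\tree_{\mathcal{X}}$ with $\treeevent{\tree_{\mathcal{X}}}=A$ and $\normgambles(\tree_{\mathcal{X}})=\mathcal{X}$, and form $\atree=A\tree_{\mathcal{X}}\chancenodemixture\compl{A}z$, a standalone tree with $\treeevent{\atree}=\pspace$ and $\normgambles(\atree)=A\mathcal{X}\gambplus\compl{A}z$. Let $N$ be the root of $\tree_{\mathcal{X}}$; since $N$ is reached only across a chance arc it lies in every normal form decision, so subtree perfectness applies at $N$ and gives $\subtreeat{\normoper_{\opt}(\atree)}{N}=\normoper_{\opt}(\tree_{\mathcal{X}})$. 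Applying $\normgambles$ to both sides, and noting that restricting an optimal decision of $\atree$ to $N$ sends its gamble $AX'\gambplus\compl{A}z$ to the subtree gamble $X'$, yields $\opt(\mathcal{X}|A)=\{X'\in\mathcal{X}\colon AX'\gambplus\compl{A}z\in\opt(A\mathcal{X}\gambplus\compl{A}z|\pspace)\}$. The point is now that $AX=AY$ makes $AX\gambplus\compl{A}z$ and $AY\gambplus\compl{A}z$ the \emph{same} gamble, so $X$ lies in the right-hand set if and only if $Y$ does; hence $X\in\opt(\mathcal{X}|A)$ implies $Y\in\opt(\mathcal{X}|A)$.

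The \emph{intersection} and \emph{mixture} properties follow the same recipe but chain two applications of subtree perfectness via transitivity of restriction, $\subtreeat{\subtreeat{\setoftrees}{M}}{N}=\subtreeat{\setoftrees}{N}$ when $N$ descends from $M$. For intersection (assume $A\neq\pspace$, the case $A=\pspace$ being direct), realise $\mathcal{X}$ and $\mathcal{Y}$ by consistent trees $\tree_{\mathcal{X}},\tree_{\mathcal{Y}}$ with preceding-event $A$ and set $\atree=A(\tree_{\mathcal{X}}\decnodeunion\tree_{\mathcal{Y}})\chancenodemixture\compl{A}z$; writing $M$ for the decision node (reached across a chance arc, hence in every decision) and $N$ for the root of $\tree_{\mathcal{Y}}$, subtree perfectness at $M$ identifies $\subtreeat{\normoper_{\opt}(\atree)}{M}$ with $\normoper_{\opt}(\tree_{\mathcal{X}}\decnodeunion\tree_{\mathcal{Y}})$, whose gambles are $\opt(\mathcal{X}|A)$. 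By Lemma~\ref{lemma:I:Istar:equality} the hypothesis $\opt(\mathcal{X}|A)\cap\mathcal{Y}\neq\emptyset$ is exactly what makes $N$ lie in some element of $\normoper_{\opt}(\tree_{\mathcal{X}}\decnodeunion\tree_{\mathcal{Y}})$, and hence, through $M$, of $\normoper_{\opt}(\atree)$; so subtree perfectness also applies at $N$, and transitivity gives $\normoper_{\opt}(\tree_{\mathcal{Y}})=\subtreeat{\normoper_{\opt}(\tree_{\mathcal{X}}\decnodeunion\tree_{\mathcal{Y}})}{N}$, whose $\normgambles$ delivers $\opt(\mathcal{Y}|A)=\opt(\mathcal{X}|A)\cap\mathcal{Y}$. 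For mixture, set $\atree=B(A\tree_{\mathcal{X}}\chancenodemixture\compl{A}\tree_Z)\chancenodemixture\compl{B}w$ (dropping the $\compl{B}$ arc when $B=\pspace$), with $\tree_{\mathcal{X}}$ and $\tree_Z$ realising $\mathcal{X}$ and $\{Z\}$ so that their preceding events are $A\cap B$ and $\compl{A}\cap B$; subtree perfectness at the inner chance node $M$ (reached across $B$) produces $\opt(A\mathcal{X}\gambplus\compl{A}Z|B)$, while subtree perfectness at the root $N$ of $\tree_{\mathcal{X}}$ produces $\opt(\mathcal{X}|A\cap B)$, and combining the two identities characterises the optimal gambles of $\atree$ at $M$ as precisely $A\opt(\mathcal{X}|A\cap B)\gambplus\compl{A}Z$.

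The technical heart is not the choice of trees but the bookkeeping when passing from the set-of-trees equalities supplied by subtree perfectness to the set-of-gambles equalities that are the properties. One must track how restricting a normal form decision across the chance arcs relates its gamble $A(\cdot)\gambplus\compl{A}(\cdot)$ to the corresponding subtree gamble, keep in mind that $\normgambles$ is many-to-one---a feature that is in fact the engine of the conditioning argument, where distinct decisions using $X$ and $Y$ collapse onto a single gamble---and verify, before each appeal to subtree perfectness, that the relevant node lies in some optimal decision (automatic at chance nodes, and supplied by Lemma~\ref{lemma:I:Istar:equality} at the decision node). Finally one checks that every manufactured tree is consistent and every gamble-subset the appropriate flavour of consistent, so that $\normoper_{\opt}$ is defined on it; the non-emptiness hypotheses $A\cap B\neq\emptyset$ and $\compl{A}\cap B\neq\emptyset$, together with the $A$-consistency assumptions, are exactly what guarantee this.
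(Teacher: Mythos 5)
Your proof is correct and takes essentially the same route as the paper's: you realise the abstract gamble sets by the two tree shapes of Figure~\ref{fig:conditioning:intersection:mixture:are:necessary:for:factuality} and read each property off the identity $\normgambles(\normoper_{\opt}(\tree))=\opt(\normgambles(\tree)|\treeevent{\tree})$ via subtree perfectness, with the hypothesis $\opt(\mathcal{X}|A)\cap\mathcal{Y}\neq\emptyset$ (made explicit through Lemma~\ref{lemma:I:Istar:equality}, a nice touch the paper leaves tacit) licensing the application at the decision node. The only difference is cosmetic: the paper directly considers trees annotated with $\treeevent{\tree}=A$ or $B$, whereas you manufacture the conditioning event by an extra enclosing chance arc from a standalone tree with $\treeevent{\atree}=\pspace$; both are sound within the paper's framework, yours merely applying subtree perfectness to a narrower class of trees.
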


  \begin{figure}
    \begin{center}
      \begin{tikzpicture}
        [minimum size=2em,parent anchor=east,child anchor=west,grow'=east,transform shape]
        \node[draw,circle]{}
        [level distance = 4em,sibling distance=2em]
          child{
            node[draw,rectangle]{$N$}
            [sibling distance = 1em]
            child{
              node[right]{$X_1$}
            }
            child{
              node[right]{\tiny$\vdots$}
            }
            child{
              node[right]{$X_n$}
            }
            edge from parent
            node[above,sloped]{$A$}
          }
          child{
            node[right]{$Z$}
            edge from parent
            node[below,sloped]{$\compl{A}$}
          };        
      \end{tikzpicture}
      \hspace{2em}
    \begin{tikzpicture}
      [minimum size=2em,parent anchor=east,child anchor=west,grow'=east,transform shape]
      \node[draw,rectangle]{}
      [level distance = 4em, sibling distance=4em]
        child{
          node[draw,rectangle]{$N$}
          [sibling distance = 1em]
          child{
            node[right]{$Y_1$}
          }
          child{
            node[right]{\tiny$\vdots$}
          }
          child{
            node[right]{$Y_m$}
          }
        }
        child{
          node[draw,rectangle]{}
          [sibling distance = 1em]
          child{
            node[right]{$X_1$}
          }
          child{
            node[right]{\tiny$\vdots$}
          }
          child{
            node[right]{$X_n$}
          }
        };        
    \end{tikzpicture}
    \caption{Decision trees for Lemma~\ref{lemma:conditioning:intersection:mixture:are:necessary:for:factuality}.}
    \label{fig:conditioning:intersection:mixture:are:necessary:for:factuality}
  \end{center}
\end{figure}
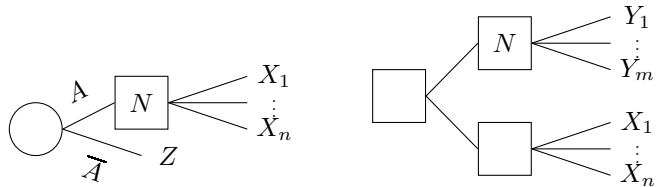

We are now ready to establish that Properties~\ref{prop:conditioning:property}, \ref{prop:intersection:property}, and~\ref{prop:mixture:property} are also sufficient for subtree perfectness. 

\begin{theorem}[Subtree perfectness theorem]\label{thm:conditions:to:be:factual}
  A normal form operator $\normoper_{\opt}$ is subtree perfect if and only if $\opt$ satisfies Properties~\ref{prop:conditioning:property}, \ref{prop:intersection:property} and~\ref{prop:mixture:property}.
\end{theorem}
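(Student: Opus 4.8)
The forward implication (necessity) requires no new work: it is exactly Lemma~\ref{lemma:conditioning:intersection:mixture:are:necessary:for:factuality}. So the plan is to concentrate on sufficiency, assuming $\opt$ satisfies Properties~\ref{prop:conditioning:property}, \ref{prop:intersection:property}, and~\ref{prop:mixture:property}, and proving that $\normoper_{\opt}$ is subtree perfect. I would argue by structural induction on the size of the tree $\tree$. The base case, where $\tree$ is a single reward leaf, is trivial since the root is the only node. For the inductive step, Lemma~\ref{lemma:factuality:at:children} reduces the whole statement to a local claim: it suffices to verify subtree perfectness at the children $K$ of the root (its condition~\ref{lemma:factuality:at:children:childcondition}), because its condition~\ref{lemma:factuality:at:children:grandchildcondition}---subtree perfectness of $\normoper_{\opt}$ on each strictly smaller subtree $\subtreeat{\tree}{K}$---is precisely the induction hypothesis. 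Thus everything comes down to establishing, for each relevant child $K$, that $\subtreeat{\normoper_{\opt}(\tree)}{K}=\normoper_{\opt}(\subtreeat{\tree}{K})$, and I would split this into two cases according to whether the root of $\tree$ is a chance node or a decision node.

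Suppose first the root is a chance node, so $\tree=\bigchancenodemixture_{i=1}^n E_i\tree_i$ and $\subtreeat{\tree}{K}=\tree_i$ for the child $K$ rooting $\tree_i$. By Lemma~\ref{lemma:gamble:normopt:equivalence:chance:nodes} (which uses Property~\ref{prop:conditioning:property}), it is enough to prove the gamble-level identity $\normgambles(\normoper_{\opt}(\tree))=\biggambplus_{i=1}^n E_i\normgambles(\normoper_{\opt}(\tree_i))$, for then $\normoper_{\opt}(\tree)=\bigchancenodemixture_{i=1}^n E_i\normoper_{\opt}(\tree_i)$, and restricting to $K$ yields $\normoper_{\opt}(\tree_i)$ as required. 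To obtain this identity I would chain the basic equalities: by Eq.~\eqref{eq:gambofnormoptisopt} and Eq.~\eqref{eq:normgambles:chancenodes}, $\normgambles(\normoper_{\opt}(\tree))=\opt(\biggambplus_{i=1}^n E_i\normgambles(\tree_i)\mid\treeevent{\tree})$; since $\treeevent{\tree_i}=E_i\cap\treeevent{\tree}$ is non-empty by consistency and $\normgambles(\tree_i)$ is $\treeevent{\tree_i}$-consistent, Lemma~\ref{lemma:opt:of:setsums:strong:equality} (which uses Properties~\ref{prop:intersection:property} and~\ref{prop:mixture:property}) turns this into $\biggambplus_{i=1}^n E_i\opt(\normgambles(\tree_i)\mid\treeevent{\tree_i})$, and a final application of Eq.~\eqref{eq:gambofnormoptisopt} to each $\tree_i$ gives the claimed right-hand side.

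Now suppose the root is a decision node, so $\tree=\bigdecnodeunion_{i=1}^n\tree_i$ and $\treeevent{\tree_i}=\treeevent{\tree}$ for every $i$. Set $\mathcal{I}=\{i\colon\normgambles(\tree_i)\cap\opt(\normgambles(\tree)\mid\treeevent{\tree})\neq\emptyset\}$; by Lemma~\ref{lemma:I:Istar:equality} these are exactly the children lying in some element of $\normoper_{\opt}(\tree)$. By Lemma~\ref{lemma:gamble:normopt:equivalence:decision:nodes} (which uses Property~\ref{prop:intersection:property}), it suffices to prove $\normgambles(\normoper_{\opt}(\tree))=\bigcup_{i\in\mathcal{I}}\normgambles(\normoper_{\opt}(\tree_i))$. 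Here I would invoke Property~\ref{prop:very:strong:path:independence}, which is equivalent to Property~\ref{prop:intersection:property} by Lemma~\ref{lemma:strong:path:independence:equivalence}: applied to $\mathcal{X}_i=\normgambles(\tree_i)$ with conditioning event $\treeevent{\tree}$, it states precisely that $\opt(\bigcup_i\normgambles(\tree_i)\mid\treeevent{\tree})=\bigcup_{i\in\mathcal{I}}\opt(\normgambles(\tree_i)\mid\treeevent{\tree})$, its active index set being exactly the $\mathcal{I}$ defined above. Rewriting both sides through Eq.~\eqref{eq:gambofnormoptisopt} and Eq.~\eqref{eq:normgambles:decisionnodes} yields the desired gamble-level equality; Lemma~\ref{lemma:gamble:normopt:equivalence:decision:nodes} then gives $\normoper_{\opt}(\tree)=\nfd(\bigdecnodeunion_{i\in\mathcal{I}}\normoper_{\opt}(\tree_i))$, and restriction to a child $K$ with $i\in\mathcal{I}$ recovers $\normoper_{\opt}(\tree_i)$.

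I expect the main obstacle to be organizational rather than computational: the key insight is matching the right equivalent of Property~\ref{prop:intersection:property} to each case---the mixture-commutation Lemma~\ref{lemma:opt:of:setsums:strong:equality} for the chance node, and the very strong path independence formulation for the decision node, whose active index set coincides with the $\mathcal{I}$ of Lemma~\ref{lemma:I:Istar:equality}. The one point demanding care is the bookkeeping of conditioning events and $A$-consistency, so that each Property is applied to a genuinely consistent set of gambles with a non-empty conditioning event; this is guaranteed by consistency of $\tree$ together with the identities $\treeevent{\tree_i}=E_i\cap\treeevent{\tree}$ (chance node) and $\treeevent{\tree_i}=\treeevent{\tree}$ (decision node).
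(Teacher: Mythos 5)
Your proposal is correct and takes essentially the same route as the paper's own proof: necessity via Lemma~\ref{lemma:conditioning:intersection:mixture:are:necessary:for:factuality}, and sufficiency by structural induction reduced to the root's children through Lemma~\ref{lemma:factuality:at:children}, with Eq.~\eqref{eq:gambofnormoptisopt} and Lemma~\ref{lemma:opt:of:setsums:strong:equality} feeding Lemma~\ref{lemma:gamble:normopt:equivalence:chance:nodes} at chance nodes, and Property~\ref{prop:very:strong:path:independence} feeding Lemmas~\ref{lemma:gamble:normopt:equivalence:decision:nodes} and~\ref{lemma:I:Istar:equality} at decision nodes. The only cosmetic difference is that you define $\mathcal{I}$ directly by the gamble-intersection condition where the paper defines it by node membership and introduces $\mathcal{I}^*$, identifying the two via Lemma~\ref{lemma:I:Istar:equality}---an equivalent piece of bookkeeping.
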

\begin{proof}
  ``only if''. See Lemma~\ref{lemma:conditioning:intersection:mixture:are:necessary:for:factuality}.
  
  ``if''.
  We proceed by structural induction on all possible arguments of $\normoper_{\opt}$, that is, on all consistent decision trees. In the base step, we prove the implication for trees consisting of only a single node. In the induction step, we prove that if the implication holds for the subtrees at every child of the root node, then the implication also holds for the whole tree.

  First, if the decision tree $\tree$ has only a single node (a reward, and no further children) then 
subtree perfectness is trivially satisfied.

  Next, suppose that the consistent decision tree $\tree$ has multiple nodes. Let $\{N_1,\dots,N_n\}=\children(\tree)$ be the children of the root node of $T$, and let $\tree_i=\subtreeat{\tree}{N_i}$. The induction hypothesis is: subtree perfectness holds for all subtrees at every child of the root node, that is, for all $\tree_i$. More precisely, for all $i\in\{1,\dots,n\}$, and all nodes $L\in\tree_i$ which are in at least one element of $\normoper_{\opt}(\tree_i)$
  \begin{equation*}
    \subtreeat{\normoper_{\opt}(\tree_i)}{L}=\normoper_{\opt}(\subtreeat{\tree_i}{L}).
  \end{equation*}
  We must show that
  \begin{equation*}
    \subtreeat{\normoper_{\opt}(\tree)}{N}=\normoper_{\opt}(\subtreeat{\tree}{N})
  \end{equation*}
  for all nodes $N$ in $\tree$ such that $N$ is in at least one element of $\normoper_{\opt}(\tree)$. By Lemma~\ref{lemma:factuality:at:children}, and the induction hypothesis, it suffices to prove the above equality only for $N\in\children(\tree)$, that is, it suffices to show that
  \begin{equation}\label{eq:conditions:to:be:factual:help:1}
    \subtreeat{\normoper_{\opt}(\tree)}{N_i}=\normoper_{\opt}(\tree_i)
  \end{equation}
  for each $i\in\{1,\dots,n\}$ such that $N_i$ is in at least one element of $\normoper_{\opt}(\tree)$. 

  If $\tree$ has a chance node as its root, that is, $\tree=\bigchancenodemixture_{i=1}^nE_i\tree_i$, then all $N_i$ are actually in every element of $\normoper_{\opt}(\tree)$, so we must simply establish Eq.~\eqref{eq:conditions:to:be:factual:help:1} for all $i\in\{1,\dots,n\}$. Observe that, if we can establish
  \begin{equation}\label{eq:conditions:to:be:factual:help:2}
    \normoper_{\opt}(\tree)=\bigchancenodemixture_{i=1}^n\event[i]\normoper_{\opt}(\tree_i),
  \end{equation}
  then Eq.~\eqref{eq:conditions:to:be:factual:help:1} follows immediately.
  Indeed, by Eq.~\eqref{eq:gambofnormoptisopt},
  \begin{align*}
    \normgambles(\normoper_{\opt}(\tree))
    &=
    \opt(\normgambles(\tree)|\treeevent{T})
    \\
    \intertext{and by the definition of the $\normgambles$ operator, Eq.~\eqref{eq:normgambles:chancenodes} in particular,}
    &=
    \opt\Bigg(\biggambplus_{i=1}^n\event[i]\normgambles(\tree_i)\Bigg|\treeevent{T}\Bigg)
    \\
    \intertext{and so by Lemma~\ref{lemma:opt:of:setsums:strong:equality},}
    &=
    \biggambplus_{i=1}^n\event[i]\opt(\normgambles(\tree_i)|\treeevent{T}\cap \event[i])
    \\
    \intertext{so, since $\treeevent{T}\cap \event[i]=\treeevent{T_i}$, and again by Eq.~\eqref{eq:gambofnormoptisopt},}
    &=
    \biggambplus_{i=1}^n\event[i]\normgambles(\normoper_{\opt}(\tree_i))
  \end{align*}
  Whence, Eq.~\eqref{eq:conditions:to:be:factual:help:2} follows by Lemma~\ref{lemma:gamble:normopt:equivalence:chance:nodes}.
  
  Finally, assume that $\tree$ has a decision node as its root, that is, $\tree=\bigdecnodeunion_{i=1}^n \tree_i$. Let $\mathcal{I}$ be the subset of $\{1,\dots,n\}$ such that $i\in\mathcal{I}$ if and only if $N_i$ is in at least one element of $\normoper_{\opt}(\tree)$. We must establish Eq.~\eqref{eq:conditions:to:be:factual:help:1} for all $i\in\mathcal{I}$. Equivalently, we must show that
  \begin{equation}\label{eq:conditions:to:be:factual:help:3}
    \normoper_{\opt}(\tree)=\nfd\Bigg(\bigdecnodeunion_{i\in\mathcal{I}}\normoper_{\opt}(\tree_i)\Bigg).
  \end{equation}
  Indeed, by Eq.~\eqref{eq:gambofnormoptisopt},
  \begin{align*}
    \normgambles(\normoper_{\opt}(\tree))
    &=
    \opt(\normgambles(\tree)|\treeevent{T})
    \\
    \intertext{and by the definition of the $\normgambles$ operator, Eq.~\eqref{eq:normgambles:decisionnodes} in particular,}
    &=
    \opt\Bigg(\bigcup_{i=1}^n\normgambles(\tree_i)\Bigg|\treeevent{T}\Bigg)
    \\
    \intertext{and so by Property~\ref{prop:very:strong:path:independence},}
    &=
    \bigcup_{i\in\mathcal{I}^*}\opt(\normgambles(\tree_i)|\treeevent{T}),
    \\
    \intertext{where $\mathcal{I}^*=\{i\in\{1,\dots,n\}\colon\normgambles(\tree_i) \cap \opt(\normgambles(\tree)|\treeevent{T})\neq\emptyset\}$, and so because $\treeevent{T}=\treeevent{T_i}$, and again by Eq.~\eqref{eq:gambofnormoptisopt},}
    &=
    \bigcup_{i\in\mathcal{I}^*} \normgambles(\normoper_{\opt}(\tree_i)).
  \end{align*}
  Hence, the conditions of Lemma~\ref{lemma:gamble:normopt:equivalence:decision:nodes} are satisfied, and $\mathcal{I}^*=\mathcal{I}$ by Lemma~\ref{lemma:I:Istar:equality}, so Eq.~\eqref{eq:conditions:to:be:factual:help:3} is established.
\end{proof}

\section{Backward Induction}
\label{sec:backward:induction:and:factuality}


Although Selten's \cite{1975:selten} definition of subgame perfectness does not explicitly refer to backward induction, one of its aims is unmistakably to identify when a game's equilibrium point can be found by backward induction. In fact, when maximizing expected utility, as in Selten's work, there seems little reason to distinguish between subtree perfectness and backward induction because the optimal decisions are essentially unique: multiple optimal decisions will have the same expectation, so it suffices to consider only a single representative.

However, if our concept of choice has no reference to expectation, multiple optimal decisions are not necessarily equivalent in any sense. Consequently, backward induction does not need to be tied to subtree perfectness.

Therefore, elsewhere \cite{2008:huntley:troffaes::impdectrees:smps}, we suggested the following backward induction method, which generalizes classical backward induction to arbitrary choice functions, and which is useful when $\normgambles(\tree)$ is very large and applying $\opt$ in one go is not feasible. We first extend $\normoper_{\opt}$ to act on sets of decision trees.

\begin{definition}
  Given a choice function $\opt$ and any set $\setoftrees$ of consistent decision trees, where $\treeevent{\tree}=A$ for all $\tree\in\setoftrees$,
  \begin{equation*}
    \normoper_{\opt}(\setoftrees) = \{\atree\in\nfd(\setoftrees)\colon \normgambles(\atree)\subseteq \opt(\normgambles(\setoftrees)|A)\}.
  \end{equation*}
\end{definition}

The goal of our backward induction algorithm is to reach a normal form solution of $\tree$ by finding normal form solutions of subtrees of $\tree$, and using these solutions to remove some elements of $\normgambles(\tree)$ before applying $\opt$:

\begin{subequations}
\begin{definition}
  \label{def:backopt}
  The normal form operator $\backopt$ is defined for any consistent decision tree $\tree$ through:
  \begin{itemize}
  \item If $\tree$ consists of only a leaf with reward $\reward\in\rewardset$, then
    \begin{equation}\backopt(\tree) = \{T\}.\end{equation}
  \item If $\tree$ has a chance node as root, that is, $\tree=\bigchancenodemixture_{i=1}^n E_i \tree_i$, then
    \begin{equation}\label{eq:backopt:chance}
      \backopt\left(\bigchancenodemixture_{i=1}^n E_i \tree_i\right)=\normoper_{\opt}\left(
        \bigchancenodemixture_{i=1}^n
        E_i
        \backopt\left(\tree_i\right)
      \right)
    \end{equation} 
  \item If $\tree$ has a decision node as root, that is, if $\tree=\bigdecnodeunion_{i=1}^n \tree_i$, then
    \begin{equation}\label{eq:backopt:decision}
      \backopt\left(\bigdecnodeunion_{i=1}^n\tree_i\right)
      =
      \normoper_{\opt}\left(\bigdecnodeunion_{i=1}^n
        \backopt(\tree_i)\right).
    \end{equation}
  \end{itemize}
\end{definition}
\end{subequations}

This algorithm is almost identical to that suggested by Kikuti et al.~\cite{2005:kikuti}: they apply $\opt$ only at decision nodes, whereas we apply $\opt$ at all type of node.

It is instructive to compare the definition of the $\normgambles$ operator (Definition~\ref{def:normgambles}) with Definition~\ref{def:backopt}. The main difference is that $\backopt$ inserts $\normoper_{\opt}$ at every stage, to remove as many normal form decisions as possible, early on.

If $\backopt=\normoper_{\opt}$, we can use the former as an efficient way of calculating the latter. Of course, this only works if a normal form gamble that is non-optimal in a subtree at a node cannot be part of an optimal gamble in the full tree. It is well known that choice functions exist for which this property does not hold: for examples, see LaValle and Wapman \cite{1986:lavalle}, Jaffray \cite{1999:jaffray}, and Seidenfeld \cite{2004:seidenfeld}. In such cases, there exist trees such that $\backopt(\tree) \neq \normoper_{\opt}(\tree)$.

The following four properties are necessary and sufficient for $\backopt$ to coincide with $\normoper_{\opt}$.

\begin{property}[Backward conditioning property]\label{prop:backward:conditioning:property}
  Let $A$ and $B$ be events such that $A\cap B\neq\emptyset$ and $\compl{A}\cap B \neq \emptyset$, and let $\mathcal{X}$ be a non-empty finite $A\cap B$-consistent set of gambles, with $\{X,Y\}\subseteq \mathcal{X}$ such that $AX=AY$. Then $X\in\opt(\mathcal{X}|A\cap B)$ implies $Y\in\opt(\mathcal{X}|A\cap B)$ whenever there is a non-empty finite $\compl{A}\cap B$-consistent set of gambles $\mathcal{Z}$ such that, for at least one $Z\in\mathcal{Z}$,
  \begin{equation*}
    AX \gambplus \compl{A}Z \in\opt(A\mathcal{X} \gambplus \compl{A}\mathcal{Z} | B).
  \end{equation*}
\end{property}

This is clearly just a minor relaxation of Property~\ref{prop:conditioning:property}.


\begin{property}[Insensitivity of optimality to the omission of non-optimal elements]\label{prop:insensitivity:to:non:optimal:elements}
For any event $A\neq\emptyset$, and any non-empty finite $A$-consistent sets of gambles $\mathcal{X}$ and $\mathcal{Y}$,
\begin{equation*}
\opt(\mathcal{X}|A) \subseteq \mathcal{Y} \subseteq \mathcal{X} \Rightarrow \opt(\mathcal{Y}|A) = \opt(\mathcal{X}|A).
\end{equation*}
\end{property}

If $\opt$ satisfies this property, then removing non-optimal elements from a set does not affect whether or not each of the remaining elements is optimal. The property is called `insensitivity to the omission of non-optimal elements' by De~Cooman and Troffaes \cite{2005:decooman}, and `property $\epsilon$' by Sen \cite{1977:sen} who attributes this designation to Douglas Blair.

\begin{property}[Preservation of non-optimality under the addition of elements]\label{prop:preservation:under:addition:of:elements}
For any event $A\neq\emptyset$, and any non-empty finite $A$-consistent sets of gambles $\mathcal{X}$ and $\mathcal{Y}$,
\begin{equation*}
\mathcal{Y} \subseteq \mathcal{X} \Rightarrow \opt(\mathcal{Y}|A) \supseteq \opt(\mathcal{X}|A) \cap \mathcal{Y}.
\end{equation*}
\end{property}

This is `property $\alpha$' in Sen \cite{1977:sen}, Axiom 7 in Luce and Raiffa \cite[p.~288]{1957:luce}, and `independence of irrelevant alternatives' in Radner and Marschak \cite{1954:radnermarschak}.

\begin{property}[Backward mixture property]\label{prop:backward:mixture:property}
For any events $A$ and $B$ such that $B\cap A \neq \emptyset$ and $B\cap\compl{A}\neq\emptyset$, any $B\cap\compl{A}$-consistent gamble $Z$, and any non-empty finite $B\cap A$-consistent set of gambles $\mathcal{X}$,
\begin{equation*}
\opt\left(A\mathcal{X} \gambplus \compl{A}Z|B\right) \subseteq A\opt(\mathcal{X}|A\cap B) \gambplus \compl{A}Z.
\end{equation*}
\end{property}

This is an ``inclusion-only'' version of Property~\ref{prop:mixture:property}. We do not need the full property because backward induction moves from right to left.


The proof of the following theorem is, up to obvious details, identical to the proof of Theorem~\ref{thm:conditions:to:be:factual}, and is therefore left as an amusing exercise to the reader.

\begin{theorem}[Backward induction theorem]\label{thm:backopt:normopt:equivalence}
  A normal form operator $\normoper_{\opt}$ satisfies backward induction (that is, $\backopt(\tree)=\normoper_{\opt}(\tree)$ for every consistent decision tree $\tree$) if and only if $\opt$ satisfies Properties~\ref{prop:backward:conditioning:property}, \ref{prop:insensitivity:to:non:optimal:elements}, \ref{prop:preservation:under:addition:of:elements}, and~\ref{prop:backward:mixture:property}.
\end{theorem}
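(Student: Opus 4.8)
The plan is to follow the template of the proof of Theorem~\ref{thm:conditions:to:be:factual}: split into the ``only if'' and ``if'' directions, and prove sufficiency by structural induction on consistent decision trees, with the induction hypothesis being $\backopt(\tree_i)=\normoper_{\opt}(\tree_i)$ for the subtrees at the children of the root. The conceptual shift is that wherever the subtree perfectness proof used the two-sided equalities of Properties~\ref{prop:intersection:property} and~\ref{prop:mixture:property} (through Lemma~\ref{lemma:opt:of:setsums:strong:equality}), we now have only the inclusion-version Properties~\ref{prop:preservation:under:addition:of:elements} and~\ref{prop:backward:mixture:property}. The device that makes these suffice is Property~\ref{prop:insensitivity:to:non:optimal:elements}: to prove $\backopt(\tree)=\normoper_{\opt}(\tree)$ it is enough to show, at the level of gambles, that the pruned set $\mathcal{Y}=\normgambles(\backopt(\tree))$ satisfies $\opt(\mathcal{X}|\treeevent{\tree})\subseteq\mathcal{Y}\subseteq\mathcal{X}$ where $\mathcal{X}=\normgambles(\tree)$, since insensitivity then forces $\opt(\mathcal{Y}|\treeevent{\tree})=\opt(\mathcal{X}|\treeevent{\tree})$. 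Thus only one-sided inclusions are ever required, and no analogue of Lemma~\ref{lemma:factuality:at:children} is needed because $\backopt$ is already defined recursively through the children.

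For sufficiency, the base case is a leaf, where $\backopt(\tree)=\{\tree\}=\normoper_{\opt}(\tree)$ trivially. For $\tree=\bigchancenodemixture_{i=1}^n E_i\tree_i$, Eq.~\eqref{eq:backopt:chance} and the hypothesis give $\backopt(\tree)=\normoper_{\opt}(\bigchancenodemixture_{i=1}^n E_i\normoper_{\opt}(\tree_i))$, whose gamble set is $\mathcal{Y}=\biggambplus_{i=1}^n E_i\opt(\normgambles(\tree_i)|\treeevent{\tree_i})$, while $\normgambles(\tree)=\mathcal{X}=\biggambplus_{i=1}^n E_i\normgambles(\tree_i)$. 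I would first prove an inclusion-only analogue of Lemma~\ref{lemma:opt:of:setsums:strong:equality}, namely $\opt(\mathcal{X}|\treeevent{\tree})\subseteq\mathcal{Y}$: given an optimal $\biggambplus_i E_i X_i$, isolate its $j$-th coordinate by writing it as $E_jX_j\gambplus\compl{E_j}W$, use Property~\ref{prop:preservation:under:addition:of:elements} to keep it optimal after deleting the other coordinates, then apply Property~\ref{prop:backward:mixture:property} to obtain $E_jX_j=E_jX_j'$ for some $X_j'\in\opt(\normgambles(\tree_j)|\treeevent{\tree_j})$, and finally use Property~\ref{prop:backward:conditioning:property} to upgrade this to $X_j$ itself being optimal; doing this for every $j$ places the gamble in $\mathcal{Y}$. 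Property~\ref{prop:insensitivity:to:non:optimal:elements} then gives $\opt(\mathcal{Y}|\treeevent{\tree})=\opt(\mathcal{X}|\treeevent{\tree})$, and an argument in the spirit of Lemma~\ref{lemma:gamble:normopt:equivalence:chance:nodes} (again invoking Property~\ref{prop:backward:conditioning:property} to match trees to gambles) lifts this to $\backopt(\tree)=\normoper_{\opt}(\tree)$ as sets of trees. The decision-node step $\tree=\bigdecnodeunion_{i=1}^n\tree_i$ is easier: here $\mathcal{Y}=\bigcup_i\opt(\normgambles(\tree_i)|\treeevent{\tree})$, and Property~\ref{prop:preservation:under:addition:of:elements} alone gives $\opt(\mathcal{X}|\treeevent{\tree})\cap\normgambles(\tree_i)\subseteq\opt(\normgambles(\tree_i)|\treeevent{\tree})$, hence $\opt(\mathcal{X}|\treeevent{\tree})\subseteq\mathcal{Y}\subseteq\mathcal{X}$; Property~\ref{prop:insensitivity:to:non:optimal:elements} closes the gamble equality and the tree identity follows directly, with no recourse to the mixture or conditioning properties.

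For necessity, I would mirror Lemma~\ref{lemma:conditioning:intersection:mixture:are:necessary:for:factuality} by evaluating both $\backopt$ and $\normoper_{\opt}$ on the two trees of Figure~\ref{fig:conditioning:intersection:mixture:are:necessary:for:factuality}. On the left (chance-node) tree, $\normoper_{\opt}$ computes $\opt(A\mathcal{X}\gambplus\compl{A}Z|B)$ in one go whereas $\backopt$ computes $\opt(A\opt(\mathcal{X}|A\cap B)\gambplus\compl{A}Z|B)$; equating these across all choices of $\mathcal{X}$, $Z$, $A$, $B$ reads off Property~\ref{prop:backward:mixture:property} (the inclusion direction) together with Property~\ref{prop:backward:conditioning:property} (from the requirement that whole trees, not merely gambles, coincide, so that $AX=AY$ with $X$ optimal forces the $Y$-branch to survive). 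On the right (nested decision-node) tree, $\normoper_{\opt}$ computes $\opt(\mathcal{X}\cup\mathcal{Y}|A)$ while $\backopt$ computes $\opt(\opt(\mathcal{X}|A)\cup\opt(\mathcal{Y}|A)|A)$; equating these across suitable nested and disjoint choices of $\mathcal{X}$ and $\mathcal{Y}$ yields Properties~\ref{prop:preservation:under:addition:of:elements} and~\ref{prop:insensitivity:to:non:optimal:elements}.

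I expect the main obstacle to be exactly the one-sidedness just described: one must resist reproducing the clean equality of Lemma~\ref{lemma:opt:of:setsums:strong:equality} and instead settle for the inclusion $\opt(\mathcal{X}|\treeevent{\tree})\subseteq\mathcal{Y}$, trusting Property~\ref{prop:insensitivity:to:non:optimal:elements} to recover equality of the \emph{optimal} sets rather than of the sets themselves---indeed $\mathcal{Y}$ may strictly contain $\opt(\mathcal{Y}|\treeevent{\tree})$, which is precisely why not every combination of locally optimal substrategies is globally optimal. The secondary fiddly point, as in the forward proof, is the bookkeeping that converts an equality of gamble sets into an equality of sets of trees; this is where Property~\ref{prop:backward:conditioning:property} does its work, reconciling gambles that agree on the conditioning event but differ elsewhere.
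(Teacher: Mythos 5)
Your proposal is correct and follows essentially the same route as the paper's appendix proof: there, too, sufficiency is proved by structural induction (with no analogue of Lemma~\ref{lemma:factuality:at:children}, as you note), using an inclusion-only analogue of Lemma~\ref{lemma:opt:of:setsums:strong:equality} (Lemma~\ref{lemma:opt:of:setsums:subseteq}, from Properties~\ref{prop:preservation:under:addition:of:elements} and~\ref{prop:backward:mixture:property}), with Property~\ref{prop:insensitivity:to:non:optimal:elements} restoring equality of the \emph{optimal} sets (Lemma~\ref{lemma:opt:of:setsums:equality}) and Property~\ref{prop:backward:conditioning:property} doing the gamble-to-tree bookkeeping (Lemma~\ref{lemma:gambnorm:norm:relationship:chancenodes}), while necessity is read off exactly the trees you describe, the decision tree yielding path independence (Property~\ref{prop:path:independence}), which is converted into Properties~\ref{prop:insensitivity:to:non:optimal:elements} and~\ref{prop:preservation:under:addition:of:elements} by Sen's equivalence (Lemma~\ref{lemma:opt:of:unions:equality}), precisely the content of your ``suitable nested and disjoint choices.'' The one detail to adjust is that for the necessity of Property~\ref{prop:backward:conditioning:property} the $\compl{A}$-branch must carry a subtree with an arbitrary gamble set $\mathcal{Z}$ rather than the single leaf $Z$ shown in Figure~\ref{fig:conditioning:intersection:mixture:are:necessary:for:factuality} (or else one first establishes Property~\ref{prop:preservation:under:addition:of:elements} and uses it to reduce the general hypothesis to the singleton case), which is how Lemma~\ref{lemma:backward:conditioning:property:is:necessary} proceeds.
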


If $\backopt(\tree)=\normoper_{\opt}(\tree)$ for any consistent $\tree$, then, at decision nodes, $\backopt(\tree)$ coincides with the method of Kikuti et al.~\cite{2005:kikuti}.

\section{Discussion}\label{sec:discussion}

\subsection{Relationship Between Subtree Perfectness and Backward Induction}

Obviously, Property~\ref{prop:conditioning:property} implies Property~\ref{prop:backward:conditioning:property}, and Property~\ref{prop:mixture:property} implies Property~\ref{prop:backward:mixture:property}. Also, it is easily shown that:

\begin{lemma}\label{lem:intersection:implies:insensitivity:and:preservation}
  Property~\ref{prop:intersection:property} implies Properties~\ref{prop:insensitivity:to:non:optimal:elements} and~\ref{prop:preservation:under:addition:of:elements}.
\end{lemma}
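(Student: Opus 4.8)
The plan is to derive both implications directly from the Intersection property (Property~\ref{prop:intersection:property}), treating each of the two target properties separately. Both reduce to invoking Property~\ref{prop:intersection:property} after checking that its hypotheses hold; no auxiliary construction or induction is needed, so the argument should be short.

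First I would establish Property~\ref{prop:insensitivity:to:non:optimal:elements}. Given $A$-consistent sets $\mathcal{X}$ and $\mathcal{Y}$ with $\opt(\mathcal{X}|A)\subseteq\mathcal{Y}\subseteq\mathcal{X}$, the key observation is that the inclusion $\opt(\mathcal{X}|A)\subseteq\mathcal{Y}$ forces $\opt(\mathcal{X}|A)\cap\mathcal{Y}=\opt(\mathcal{X}|A)$, and this set is non-empty because $\opt$ always returns a non-empty subset (Definition~\ref{def:choice:function}). Hence the hypotheses of Property~\ref{prop:intersection:property} are satisfied, and it yields $\opt(\mathcal{Y}|A)=\opt(\mathcal{X}|A)\cap\mathcal{Y}$, which equals $\opt(\mathcal{X}|A)$ by the same observation. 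This gives the required equality.

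Next I would establish Property~\ref{prop:preservation:under:addition:of:elements}. Given $A$-consistent sets with $\mathcal{Y}\subseteq\mathcal{X}$, I would split on whether $\opt(\mathcal{X}|A)\cap\mathcal{Y}$ is empty. In the non-empty case, the hypotheses of Property~\ref{prop:intersection:property} hold outright, so $\opt(\mathcal{Y}|A)=\opt(\mathcal{X}|A)\cap\mathcal{Y}$, and in particular the desired inclusion $\opt(\mathcal{Y}|A)\supseteq\opt(\mathcal{X}|A)\cap\mathcal{Y}$ holds. In the empty case, the right-hand side of the inclusion is $\emptyset$, so the inclusion is trivially true.

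The main point to be careful about — rather than a genuine obstacle — is that Property~\ref{prop:intersection:property} carries the side condition $\opt(\mathcal{X}|A)\cap\mathcal{Y}\neq\emptyset$, which cannot always be assumed for Property~\ref{prop:preservation:under:addition:of:elements}; this is exactly why the case split on emptiness is required there, whereas for Property~\ref{prop:insensitivity:to:non:optimal:elements} the hypothesis $\opt(\mathcal{X}|A)\subseteq\mathcal{Y}$ already guarantees non-emptiness. Otherwise the $A$-consistency assumptions are inherited verbatim from the statements of the two properties, so no additional verification is needed, and the proof is immediate.
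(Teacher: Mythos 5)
Your proposal is correct and follows essentially the same route as the paper's own proof: apply Property~\ref{prop:intersection:property} directly, noting for Property~\ref{prop:insensitivity:to:non:optimal:elements} that $\opt(\mathcal{X}|A)\subseteq\mathcal{Y}$ makes the intersection equal $\opt(\mathcal{X}|A)$, and splitting on emptiness of $\opt(\mathcal{X}|A)\cap\mathcal{Y}$ for Property~\ref{prop:preservation:under:addition:of:elements}. Your explicit check that the non-emptiness side condition of Property~\ref{prop:intersection:property} is guaranteed by the non-emptiness of $\opt(\mathcal{X}|A)$ is a small point the paper leaves implicit, but otherwise the arguments coincide.
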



\begin{corollary}
  If $\normoper_{\opt}$ is subtree perfect, then $\normoper_{\opt}=\backopt$.
\end{corollary}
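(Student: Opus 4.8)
The plan is to chain the two characterization theorems through the property implications collected just before the corollary. First I would invoke Theorem~\ref{thm:conditions:to:be:factual}: since $\normoper_{\opt}$ is assumed subtree perfect, $\opt$ must satisfy Properties~\ref{prop:conditioning:property}, \ref{prop:intersection:property}, and~\ref{prop:mixture:property}. The target is then the hypothesis list of Theorem~\ref{thm:backopt:normopt:equivalence}, namely Properties~\ref{prop:backward:conditioning:property}, \ref{prop:insensitivity:to:non:optimal:elements}, \ref{prop:preservation:under:addition:of:elements}, and~\ref{prop:backward:mixture:property}; once $\opt$ is shown to satisfy all four, the equality $\backopt(\tree)=\normoper_{\opt}(\tree)$ for every consistent $\tree$ follows directly, which is exactly $\normoper_{\opt}=\backopt$.

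Next I would assemble the three implications recorded immediately above the corollary. Property~\ref{prop:conditioning:property} implies Property~\ref{prop:backward:conditioning:property}, since the latter is merely a relaxation of the former, and Property~\ref{prop:mixture:property} implies Property~\ref{prop:backward:mixture:property}, since the latter is the inclusion-only half of the former; both are immediate. Lemma~\ref{lem:intersection:implies:insensitivity:and:preservation} then supplies the remaining two at once: Property~\ref{prop:intersection:property} implies both Property~\ref{prop:insensitivity:to:non:optimal:elements} and Property~\ref{prop:preservation:under:addition:of:elements}.

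Combining these, $\opt$ satisfies all four hypotheses of Theorem~\ref{thm:backopt:normopt:equivalence}, and the corollary is concluded by applying that theorem. The argument is purely a matter of threading the implications together.

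The hard part is essentially absent at this stage, as all the substance has been absorbed into the two characterization theorems and into Lemma~\ref{lem:intersection:implies:insensitivity:and:preservation}. The only point needing care is verifying that the property implications genuinely cover the \emph{entire} list demanded by Theorem~\ref{thm:backopt:normopt:equivalence}---in particular that Property~\ref{prop:intersection:property} alone discharges both the $\epsilon$-type condition (Property~\ref{prop:insensitivity:to:non:optimal:elements}) and the $\alpha$-type condition (Property~\ref{prop:preservation:under:addition:of:elements})---but this is precisely what the preceding lemma and remarks guarantee, so the result drops out immediately.
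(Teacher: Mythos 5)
Your proposal is correct and is exactly the paper's intended argument: the text immediately preceding the corollary records that Property~\ref{prop:conditioning:property} implies Property~\ref{prop:backward:conditioning:property}, Property~\ref{prop:mixture:property} implies Property~\ref{prop:backward:mixture:property}, and Lemma~\ref{lem:intersection:implies:insensitivity:and:preservation} supplies Properties~\ref{prop:insensitivity:to:non:optimal:elements} and~\ref{prop:preservation:under:addition:of:elements}, after which the corollary follows by chaining Theorem~\ref{thm:conditions:to:be:factual} with Theorem~\ref{thm:backopt:normopt:equivalence}. No gaps; you have correctly identified that all substance lives in the two characterization theorems and the implication lemma.
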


Subtree perfectness is, however, not necessary for backward induction. For example, point-wise dominance satisfies Properties~\ref{prop:conditioning:property}, \ref{prop:insensitivity:to:non:optimal:elements}, \ref{prop:preservation:under:addition:of:elements}, and~\ref{prop:backward:mixture:property}, but as we saw in Example~\ref{ex:maximality:counterfactual}, it lacks subtree perfectness.

Backward induction does imply a weaker form of subtree perfectness. Suppose $\backopt(\tree)=\normoper_{\opt}(\tree)$ for all consistent decision trees. By definition of $\backopt$, for any node $N$ that is in at least one element of $\backopt(\tree)$ we have
\begin{equation*}
  \subtreeat{\normoper_{\opt}(\tree)}{N}=\subtreeat{\backopt(\tree)}{N}\subseteq\backopt(\subtreeat{\tree}{N})=\normoper_{\opt}(\subtreeat{\tree}{N}).
\end{equation*}

Why can this be seen as a type of subtree perfectness? A subgame perfect equilibrium point is one that induces an equilibrium point in all subgames. A subtree perfect normal form operator is one that induces its normal form solution in all subtrees. Theorem~\ref{thm:backopt:normopt:equivalence} implies \emph{every} optimal normal form \emph{decision} induces \emph{an} optimal normal form decision in any subtree. So although we do not have subtree perfectness of \emph{solutions} we do have subtree perfectness of \emph{decisions}.

\begin{definition}
  $\normoper$ is \emph{subtree perfect for normal form decisions} if for every $N$ in $\tree$ such that $N$ is in at least one element of $\normoper(\tree)$,
\begin{equation*}
  \subtreeat{\normoper(\tree)}{N}\subseteq \normoper(\subtreeat{\tree}{N}).
\end{equation*}
\end{definition}

Obviously, backward induction implies subtree perfectness for normal form decisions. But, the opposite implication does not hold: Property~\ref{prop:insensitivity:to:non:optimal:elements} is not necessary for subtree perfectness for normal form decisions.

\begin{theorem}\label{thm:weak:subtree:perfectness}
  For any choice function $\opt$, $\normoper_{\opt}$ is subtree perfect for normal form decisions if and only if $\opt$ satisfies Properties~\ref{prop:backward:conditioning:property}, \ref{prop:preservation:under:addition:of:elements}, and~\ref{prop:backward:mixture:property}.
\end{theorem}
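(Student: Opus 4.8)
The plan is to prove both directions by mirroring the proofs of Theorem~\ref{thm:conditions:to:be:factual} and Lemma~\ref{lemma:conditioning:intersection:mixture:are:necessary:for:factuality}, everywhere replacing the equality $\subtreeat{\normoper_{\opt}(\tree)}{N}=\normoper_{\opt}(\subtreeat{\tree}{N})$ by the one-sided inclusion $\subtreeat{\normoper_{\opt}(\tree)}{N}\subseteq\normoper_{\opt}(\subtreeat{\tree}{N})$. Subtree perfectness for decisions is precisely this ``$\subseteq$'' half of subtree perfectness, so I expect the three inclusion-only/backward properties to take over the roles of the three full properties of Theorem~\ref{thm:conditions:to:be:factual}: Property~\ref{prop:preservation:under:addition:of:elements} (property~$\alpha$) replaces the intersection property, while Properties~\ref{prop:backward:mixture:property} and~\ref{prop:backward:conditioning:property} replace the mixture and conditioning properties.

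For the ``only if'' direction I would reuse the two trees of Figure~\ref{fig:conditioning:intersection:mixture:are:necessary:for:factuality} exactly as in Lemma~\ref{lemma:conditioning:intersection:mixture:are:necessary:for:factuality}, now extracting only the inclusion. From the second tree, writing $A=\treeevent{\tree}$, $\mathcal{Y}=\normgambles(\subtreeat{\tree}{N})$ and $\mathcal{X}=\normgambles(\tree)\supseteq\mathcal{Y}$, the inclusion at $N$ says $\opt(\mathcal{X}|A)\cap\mathcal{Y}\subseteq\opt(\mathcal{Y}|A)$, which is Property~\ref{prop:preservation:under:addition:of:elements}. From the first tree with the $\compl{A}$-branch carrying a single reward $Z$ and $N$ carrying $\mathcal{X}$, the inclusion at $N$ says $\opt(A\mathcal{X}\gambplus\compl{A}Z|B)\subseteq A\opt(\mathcal{X}|A\cap B)\gambplus\compl{A}Z$, which is Property~\ref{prop:backward:mixture:property}. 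Finally, letting the $\compl{A}$-branch realise a set $\mathcal{Z}$ and taking $X,Y\in\mathcal{X}$ with $AX=AY$, the decision through $N$ choosing $Y$ has the same gamble $AX\gambplus\compl{A}Z=AY\gambplus\compl{A}Z$ as the one choosing $X$; whenever this mixture is optimal (the trigger of Property~\ref{prop:backward:conditioning:property}) both decisions lie in $\normoper_{\opt}(\tree)$, and the inclusion at $N$ forces $Y\in\opt(\mathcal{X}|A\cap B)$, giving Property~\ref{prop:backward:conditioning:property}.

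For the ``if'' direction I would run the same structural induction as in Theorem~\ref{thm:conditions:to:be:factual}. The base case (a single reward) is trivial, and the reduction to children is handled by an inclusion analogue of Lemma~\ref{lemma:factuality:at:children}: because restriction is monotone under set inclusion, a child-level inclusion $\subtreeat{\normoper_{\opt}(\tree)}{N_i}\subseteq\normoper_{\opt}(\tree_i)$ combined with the induction hypothesis inside $\tree_i=\subtreeat{\tree}{N_i}$ propagates the inclusion to every deeper node. At a decision-node root $\tree=\bigdecnodeunion_{i=1}^n\tree_i$ the child-level inclusion is immediate: any $\atree\in\normoper_{\opt}(\tree)$ passing through $N_i$ has gamble $g\in\normgambles(\tree_i)\cap\opt(\normgambles(\tree)|\treeevent{\tree})$, and since $\treeevent{\tree_i}=\treeevent{\tree}$, Property~\ref{prop:preservation:under:addition:of:elements} (with $\mathcal{Y}=\normgambles(\tree_i)\subseteq\normgambles(\tree)$) gives $g\in\opt(\normgambles(\tree_i)|\treeevent{\tree_i})$, i.e.\ $\subtreeat{\atree}{N_i}\in\normoper_{\opt}(\tree_i)$.

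The chance-node root $\tree=\bigchancenodemixture_{i=1}^n E_i\tree_i$ is where the real work lies, and is the step I expect to be the main obstacle. Here I would first establish the inclusion-only analogue of Lemma~\ref{lemma:opt:of:setsums:strong:equality}, namely $\opt(\biggambplus_{i=1}^n E_i\mathcal{X}_i|B)\subseteq\biggambplus_{i=1}^n E_i\opt(\mathcal{X}_i|E_i\cap B)$, one branch at a time: for an optimal $g=\biggambplus_{i=1}^n E_i g_i$, Property~\ref{prop:preservation:under:addition:of:elements} keeps $g$ optimal after restricting every other branch to its value under $g$, turning the complementary part into a single gamble, so that the binary Property~\ref{prop:backward:mixture:property} applies and yields a representative $h_i\in\opt(\mathcal{X}_i|E_i\cap B)$ agreeing with $g_i$ on $E_i$. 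The remaining subtlety, exactly as in Lemma~\ref{lemma:gamble:normopt:equivalence:chance:nodes}, is that the gamble $g_i$ of the actual restricted decision may differ from $h_i$ off $\treeevent{\tree_i}$; Property~\ref{prop:backward:conditioning:property}, whose trigger is supplied by the optimal mixture $g$ itself, transfers optimality from $h_i$ to $g_i$ and hence gives $\subtreeat{\atree}{N_i}\in\normoper_{\opt}(\tree_i)$. Combining the three properties in this way closes the chance-node step and completes the induction.
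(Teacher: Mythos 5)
Your proposal is correct and takes essentially the same route as the paper: the necessity direction uses the two trees of Figure~\ref{fig:conditioning:intersection:mixture:are:necessary:for:factuality} exactly as in Lemmas~\ref{lemma:weak:subtree:perfectness:backward:conditioning:property:is:necessary}, \ref{lemma:weak:subtree:perfectness:preservation:is:necessary}, and~\ref{lemma:weak:subtree:perfectness:backward:mixture:is:necessary}, while the sufficiency direction runs the same structural induction, reducing to children via the inclusion analogue of Lemma~\ref{lemma:factuality:at:children} (Lemma~\ref{lemma:weak:perfectness:at:children:implies:weak:perfectness:everywhere}), handling decision-node roots by Property~\ref{prop:preservation:under:addition:of:elements}, and handling chance-node roots by the inclusion-only setsum result (Lemma~\ref{lemma:opt:of:setsums:subseteq}) followed by Property~\ref{prop:backward:conditioning:property} with the trigger supplied by the optimal mixture itself. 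Your one-branch-at-a-time derivation of the setsum inclusion is only a minor repackaging of the paper's Lemmas~\ref{lemma:opt:of:unions:subseteq} and~\ref{lemma:setsums:subseteq}, so the argument is the same in substance.
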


\subsection{Normal Form and Extensive Form Equivalence}\label{sec:equivalence}

Extensive form solutions can be seen as more natural solutions to decision trees, because they represent the sequential nature of the decision making, whereas the normal form solutions remove the sequential aspects. It is common to ask whether there is an equivalence between certain normal form operators and extensive form operators. For our definitions of normal and extensive form, this is very different from the of ``normal form/extensive form coincidence'' of McClennen~\cite[p.~115]{1990:mcclennen}. In McClennen's terms, the normal form and extensive form refer to the structure of the tree, and coincidence requires the solution of the two forms to be the same. In the case of $\normoper_{\opt}$, this coincidence will always occur by definition.

The clearest equivalence arises when $\normoper(\tree)=\{\extoper(\tree)\}$, that is, when the extensive form solution is itself a normal form decision, and is also the only element of the normal form solution. Even when using choice functions corresponding to total preorders, this may not arise for every decision tree, so a more general concept of equivalence is required. We shall consider an extensive form operator and a normal form operator to be equivalent if, for every consistent decision tree $\tree$,
\begin{equation*}
  \normoper(\tree)=\nfd(\extoper(\tree)).
\end{equation*}
If such an equivalence holds, it is easy to move from the extensive form solution to the normal form solution, simply by finding all normal form decisions. It is also easy to move from the normal form to extensive form, as the following result shows.

\begin{lemma}\label{lemma:equivalent:extensive:form:construction}
  Suppose $\normoper$ and $\extoper$ are equivalent. Then a node $N$ is in $\extoper(\tree)$ if and only if $N$ is in at least one element of $\normoper(\tree)$.
\end{lemma}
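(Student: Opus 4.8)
The plan is to exploit the hypothesis that $\normoper$ and $\extoper$ are equivalent, i.e.\ $\normoper(\tree)=\nfd(\extoper(\tree))$ for every consistent $\tree$, to reduce the claim to a purely structural fact about the single extensive form solution $\atree:=\extoper(\tree)$. Indeed, since $\normoper(\tree)=\nfd(\atree)$, the statement ``$N$ is in at least one element of $\normoper(\tree)$'' is literally the statement ``$N$ is in at least one element of $\nfd(\atree)$''. Hence it suffices to prove the following: for an extensive form solution $\atree$, a node $N$ lies in $\atree$ if and only if $N$ lies in some normal form decision $\atree'\in\nfd(\atree)$. This is where the work is, and it is a tree-surgery argument that does not use the choice function at all.

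For the ``if'' direction I would argue directly from the definition of $\nfd$. Any $\atree'\in\nfd(\atree)$ is obtained from $\atree$ by retaining exactly one arc at each decision node, so $\atree'$ is a subtree of $\atree$ and every node of $\atree'$ is a node of $\atree$. Thus if $N$ is in some $\atree'$, then $N$ is in $\atree$, giving this half immediately.

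For the ``only if'' direction I would construct an explicit normal form decision of $\atree$ passing through $N$. Since $N$ is in $\atree$, the unique path from the root of $\atree$ to $N$ lies entirely in $\atree$; in particular, at each decision node strictly preceding $N$ on this path, the arc continuing toward $N$ is one of the retained arcs of $\atree$. I would then build $\atree'$ by selecting, at every decision node, exactly one retained arc: along the path to $N$ select the arc toward $N$, and at every other decision node (those off the path, or those occurring beyond $N$) select an arbitrary retained arc. At chance nodes all arcs are kept, since an extensive form solution deletes only decision arcs. By construction $\atree'\in\nfd(\atree)$ and $\atree'$ contains the whole path to $N$, hence contains $N$.

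The main obstacle is not conceptual but a matter of verifying that this selection is always well defined and yields a genuine element of $\nfd(\atree)$. The crucial point is that, by the definition of an extensive form solution, the set of arcs retained at each decision node is \emph{non-empty}, so an arbitrary choice is always available at the off-path decision nodes; and on the path to $N$ the required arc is available precisely because $N\in\atree$ forces the entire root-to-$N$ path into $\atree$. Checking that the result selects exactly one arc per decision node, keeps all chance arcs, and therefore qualifies as a normal form decision containing $N$ is the part that warrants care, but it is otherwise routine.
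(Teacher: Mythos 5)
Your proposal is correct and takes essentially the same route as the paper: the paper's proof is the same reduction via $\normoper(\tree)=\nfd(\extoper(\tree))$ to the structural fact that a node lies in $\extoper(\tree)$ if and only if it lies in some element of $\nfd(\extoper(\tree))$, merely stated contrapositively and with that fact asserted rather than proved. Your explicit root-to-$N$ path construction (selecting the on-path retained arc at decision nodes, an arbitrary retained arc elsewhere, and all chance arcs) simply fills in the detail the paper leaves implicit.
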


If we can find equivalent normal form and extensive form operators, then either both are subtree perfect, or neither is.

\begin{lemma}\label{lemma:corresponding:operators:both:factual:or:neither}
  Suppose that $\extoper$ and $\normoper$ are equivalent. Then, $\normoper$ is subtree perfect if and only if $\extoper$ is subtree perfect.
\end{lemma}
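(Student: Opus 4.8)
The plan is to reduce the stated equivalence to two purely structural facts about extensive form solutions and the $\nfd$ operator, after which subtree perfectness of $\normoper$ and of $\extoper$ become the same equation read through $\nfd$. Throughout I would fix a consistent decision tree $\tree$ and a node $N$. The first observation is that, by Lemma~\ref{lemma:equivalent:extensive:form:construction}, ``$N$ is in $\extoper(\tree)$'' holds if and only if ``$N$ is in at least one element of $\normoper(\tree)$''; hence the two definitions of subtree perfectness quantify over exactly the same set of pairs $(\tree,N)$, and it suffices to prove, for each such pair, that $\subtreeat{\extoper(\tree)}{N}=\extoper(\subtreeat{\tree}{N})$ holds if and only if $\subtreeat{\normoper(\tree)}{N}=\normoper(\subtreeat{\tree}{N})$.

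The first structural fact I would establish is that restriction commutes with $\nfd$ on extensive form solutions: for any extensive form solution $S$ with $N$ in $S$, one has $\subtreeat{\nfd(S)}{N}=\nfd(\subtreeat{S}{N})$. The inclusion ``$\subseteq$'' is immediate, since restricting a single-arc-per-decision-node subtree of $S$ that passes through $N$ yields a single-arc-per-decision-node subtree of $\subtreeat{S}{N}$. For ``$\supseteq$'', given a normal form decision $V$ of $\subtreeat{S}{N}$, I would extend it to a normal form decision $U$ of $S$ routed through $N$: along the path from the root of $S$ to $N$, select at each decision node the arc towards $N$ (available because $N$ is in $S$), and at every remaining decision node of $S$---both on the side branches of chance nodes along this path and elsewhere---select any retained arc (possible since an extensive form solution keeps a non-empty arc set at each decision node). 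Then $\subtreeat{U}{N}=V$, giving the reverse inclusion.

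The second structural fact is that $\nfd$ is injective on extensive form solutions of a fixed tree. A decision arc belongs to an extensive form solution $S$ exactly when it belongs to some element of $\nfd(S)$, so $S$ is recovered as the union of its normal form decisions; hence two extensive form solutions of the same tree with equal $\nfd$ are equal. I would apply this to the two extensive form solutions $\subtreeat{\extoper(\tree)}{N}$ and $\extoper(\subtreeat{\tree}{N})$ of the tree $\subtreeat{\tree}{N}$, first noting that $\subtreeat{\extoper(\tree)}{N}$ is indeed an extensive form solution of $\subtreeat{\tree}{N}$ (it retains a non-empty arc set at each decision node below $N$).

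Assembling these, the equivalence $\normoper(\cdot)=\nfd(\extoper(\cdot))$ together with the first fact gives $\subtreeat{\normoper(\tree)}{N}=\subtreeat{\nfd(\extoper(\tree))}{N}=\nfd(\subtreeat{\extoper(\tree)}{N})$, while $\normoper(\subtreeat{\tree}{N})=\nfd(\extoper(\subtreeat{\tree}{N}))$ directly. Thus the normal form equality holds if and only if $\nfd(\subtreeat{\extoper(\tree)}{N})=\nfd(\extoper(\subtreeat{\tree}{N}))$, which by the second fact is equivalent to $\subtreeat{\extoper(\tree)}{N}=\extoper(\subtreeat{\tree}{N})$, i.e.\ the extensive form equality. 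Since the ranges of $(\tree,N)$ coincide, the two ``for all'' statements are equivalent. I expect the main obstacle to be the first fact, specifically its ``$\supseteq$'' extension step, where one must verify that every normal form decision downstream of $N$ genuinely arises by restricting some global normal form decision of $\extoper(\tree)$ routed through $N$; the bookkeeping over the chance-node side branches is the only place where real care is needed, and everything else follows routinely from the definitions.
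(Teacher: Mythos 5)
Your proof is correct and takes essentially the same route as the paper: the paper's ``if'' direction rests on exactly your first fact, the commutation $\subtreeat{\nfd(\extoper(\tree))}{N}=\nfd(\subtreeat{\extoper(\tree)}{N})$ (asserted there without proof), while its ``only if'' direction, which tracks node membership in $\subtreeat{\extoper(\tree)}{N}$ versus $\extoper(\subtreeat{\tree}{N})$ via Lemma~\ref{lemma:equivalent:extensive:form:construction}, is your injectivity of $\nfd$ on extensive form solutions of a fixed tree stated in node-membership form. Your explicit extension argument for the inclusion ``$\supseteq$'' in the commutation fact merely fills in a detail the paper leaves implicit.
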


With these results at hand, normal-extensive form equivalence is easily established, using structural induction as usual (the proof is left to the reader):

\begin{theorem}\label{thm:factual:norm:implies:corresponding:ext}
  If a normal form operator $\normoper_{\opt}$ induced by a choice function $\opt$ is subtree perfect, then there exists an equivalent subtree perfect extensive form operator $\extoper$.
\end{theorem}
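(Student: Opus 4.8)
The plan is to build the extensive form operator $\extoper$ directly out of $\normoper_{\opt}$ by retaining, at every decision node, exactly those arcs that are used by at least one optimal normal form decision, and then to obtain subtree perfectness of $\extoper$ for free from Lemma~\ref{lemma:corresponding:operators:both:factual:or:neither}. Concretely, I would define $\extoper$ by structural recursion: on a leaf, $\extoper(\tree)=\tree$; on a chance-rooted tree $\tree=\bigchancenodemixture_{i=1}^n E_i\tree_i$, set $\extoper(\tree)=\bigchancenodemixture_{i=1}^n E_i\extoper(\tree_i)$; and on a decision-rooted tree $\tree=\bigdecnodeunion_{i=1}^n\tree_i$, set $\extoper(\tree)=\bigdecnodeunion_{i\in\mathcal{I}}\extoper(\tree_i)$, where $\mathcal{I}=\{i:N_i\text{ is in at least one element of }\normoper_{\opt}(\tree)\}$ and $N_i$ is the root of $\tree_i$. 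Because a choice function always returns a non-empty set, $\normoper_{\opt}(\tree)\neq\emptyset$, so $\mathcal{I}\neq\emptyset$ and a non-empty subset of arcs is retained at each decision node; chance structure is kept intact; hence $\extoper$ is a genuine extensive form operator that prunes a subtree of $\tree$.

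The core of the argument is to show that $\extoper$ is equivalent to $\normoper_{\opt}$, that is, $\nfd(\extoper(\tree))=\normoper_{\opt}(\tree)$ for every consistent $\tree$, which I would establish by structural induction. The leaf case is immediate, since $\normgambles(\tree)=\{\reward\}$ forces $\normoper_{\opt}(\tree)=\{\tree\}=\nfd(\extoper(\tree))$. At a chance node, $\nfd$ distributes over $\bigchancenodemixture$, so the induction hypothesis gives $\nfd(\extoper(\tree))=\bigchancenodemixture_{i=1}^n E_i\nfd(\extoper(\tree_i))=\bigchancenodemixture_{i=1}^n E_i\normoper_{\opt}(\tree_i)$, and this equals $\normoper_{\opt}(\tree)$ by the decomposition Eq.~\eqref{eq:conditions:to:be:factual:help:2}. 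At a decision node, $\nfd$ distributes over $\bigdecnodeunion$ as a union, so the induction hypothesis gives $\nfd(\extoper(\tree))=\bigcup_{i\in\mathcal{I}}\normoper_{\opt}(\tree_i)=\nfd\bigl(\bigdecnodeunion_{i\in\mathcal{I}}\normoper_{\opt}(\tree_i)\bigr)$, which is exactly $\normoper_{\opt}(\tree)$ by Eq.~\eqref{eq:conditions:to:be:factual:help:3}, after matching the index sets via $\mathcal{I}^*=\mathcal{I}$ from Lemma~\ref{lemma:I:Istar:equality}. Both recursive decompositions are available precisely because, by the ``only if'' direction of Theorem~\ref{thm:conditions:to:be:factual}, subtree perfectness of $\normoper_{\opt}$ forces $\opt$ to satisfy Properties~\ref{prop:conditioning:property}, \ref{prop:intersection:property}, and~\ref{prop:mixture:property}, under which those equations were derived.

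With equivalence established, the conclusion is immediate: $\normoper_{\opt}$ is subtree perfect by hypothesis, and equivalent operators are simultaneously subtree perfect or not by Lemma~\ref{lemma:corresponding:operators:both:factual:or:neither}, so $\extoper$ is subtree perfect as well, which is what the theorem asserts.

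I expect the main obstacle to be the inclusion $\nfd(\extoper(\tree))\subseteq\normoper_{\opt}(\tree)$ in the decision-node step, i.e. ruling out that the arc-union construction over-generates strategies. The reverse inclusion is trivial, since every optimal decision uses only retained arcs, but a naive arc-union can manufacture spurious normal form decisions that splice locally optimal arcs at different decision nodes into a globally non-optimal strategy---exactly the pathology of the two-decision-node example in Section~\ref{sec:extensive:normal:form:solutions}. What rescues the construction here is that $\normoper_{\opt}$ is induced by a choice function satisfying Properties~\ref{prop:conditioning:property}--\ref{prop:mixture:property}: the decomposition identities Eqs.~\eqref{eq:conditions:to:be:factual:help:2} and~\eqref{eq:conditions:to:be:factual:help:3} show that $\normoper_{\opt}(\tree)$ really is the product over chance branches and the union over retained decision branches of the subtree solutions, so no spurious cross-combinations survive. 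This is also precisely why the theorem is stated only for operators induced by a choice function, and not for arbitrary subtree perfect normal form operators.
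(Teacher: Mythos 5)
Your proposal is correct and follows essentially the same route as the paper: you build the arc-retaining extensive form operator (your recursive definition coincides, under the hypothesis, with the node-membership characterization of Lemma~\ref{lemma:equivalent:extensive:form:construction}), prove $\nfd(\extoper(\tree))=\normoper_{\opt}(\tree)$ by structural induction using the decompositions of Eqs.~\eqref{eq:conditions:to:be:factual:help:2} and~\eqref{eq:conditions:to:be:factual:help:3} (available because subtree perfectness forces Properties~\ref{prop:conditioning:property}, \ref{prop:intersection:property}, and~\ref{prop:mixture:property} via Theorem~\ref{thm:conditions:to:be:factual}), and conclude subtree perfectness of $\extoper$ from Lemma~\ref{lemma:corresponding:operators:both:factual:or:neither}, exactly as the paper does. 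Your closing diagnosis---that the decision-node inclusion is the crux and that the decomposition identities are what prevent the arc-union from over-generating strategies, which is why the theorem holds only for operators induced by a choice function---matches the paper's own remarks following the theorem.
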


Perhaps surprisingly, there are subtree perfect normal form operators (necessarily \emph{not} induced by a choice function) which have no equivalent extensive form operator. Indeed, the proof of Theorem~\ref{thm:factual:norm:implies:corresponding:ext} relies on the following consequence of Lemmas~\ref{lemma:opt:of:setsums:strong:equality} and~\ref{lemma:gamble:normopt:equivalence:chance:nodes}:
\begin{equation*}
  \textstyle\normoper_{\opt}\left(\bigchancenodemixture_{i=1}^n \event_i \tree_i \right) = \bigchancenodemixture_{i=1}^n\event_i \normoper_{\opt}(\tree_i),
\end{equation*}
General subtree perfect normal form operators need not satisfy this. For instance, the example of Section~\ref{sec:extensive:normal:form:solutions} can yield a subtree perfect normal form operator which has no equivalent extensive form representation.

\subsection{Related Work}

As mentioned earlier, our results have strong links with the work of Hammond~\cite{1988:hammond}, Machina~\cite{1989:machina}, and McClennen~\cite{1990:mcclennen}. 

Machina~\cite{1989:machina} assumes probabilities at chance nodes and choice functions that correspond to total preorders. Under these assumptions, a necessary condition for subtree perfectness of $\normoper_{\opt}$ is that $\opt$ satisfies \emph{separability over mutually exclusive events}. Consider $n$ possible events with probabilities $p_1$, \dots, $p_n$, a gamble giving reward $r_i\in\rewardset$ if event $i$ obtains, and a second gamble that gives reward $r_i$ if event $i$ occurs for $i>1$ and $r_*\in\rewardset$ for $i=1$. Separability says that the second gamble is preferred to the first if and only if $r_*$ is preferred to $r_1$.

There are various ways to adapt separability for our more general setting. For example, consider a non-trivial event $A$ and gambles $X$ and $Y$ such that $\compl{A}X=\compl{A}Y$, $AX=Ar_1$, and $AY=Ar_2$ for some rewards $r_1$ and $r_2\in\rewardset$. Separability could be: $\opt(\{X,Y\})=\{X\}$ if and only if $\opt(\{r_1, r_2\})=\{r_1\}$. Note that Property~\ref{prop:mixture:property} implies this. Indeed, Property~\ref{prop:mixture:property} can be seen as a strong form of separability, for it implies every reasonable generalization of separability.

As already noted, Hammond's~\cite{1988:hammond} results are slightly different from ours due to the definition of decision trees, and whether gambles involving probabilities are admitted. If such details are dealt with, 
then his results become very similar to ours. 
Using our terminology and notation, Hammond's first defines:

\begin{definition}
  An extensive form operator $\extoper$ is \emph{consistent} if it is subtree perfect.
\end{definition}

\begin{definition}\label{def:consequentialist}
  An extensive form operator $\extoper$ is \emph{consequentialist} if, for any decision trees $\tree_1$ and $\tree_2$ such that $\normgambles(\tree_1)=\normgambles(\tree_2)$ and $\treeevent{\tree_1}=\treeevent{\tree_2}$,
    \begin{equation*}
      \normgambles(\extoper(\tree_1))=\normgambles(\extoper(\tree_2)).
    \end{equation*}
\end{definition}

\begin{definition}
  An extensive form operator $\extoper$ is \emph{strongly consequentialist} if it is consequentialist and, for any $U\in\nfd(\tree)$ such that $\normgambles(U)\subseteq\normgambles(\extoper(\tree))$, $U\in\nfd(\extoper(\tree))$.
\end{definition}

Note that Hammond does not use the term ``strongly consequentialist''. 
He writes that consequentialism means that decisions should be valued by their gambles, which corresponds to strong consequentialism. Yet, his only mathematical definition of consequentialism seems to be identical to Definition~\ref{def:consequentialist}. The difference 
is small, but important to link our results with his.

Hammond argues that consistent and (not necessarily strongly) consequentialist extensive form operators induce a choice function on gambles as follows.

\begin{definition}
  For a consistent and consequentialist extensive form operator $\extoper$, define its corresponding choice function $\opt_{\extoper}$ by
  \begin{equation*}
    \opt_{\extoper}(\mathcal{X}|A)=\normgambles(\extoper(\tree)),
  \end{equation*}
  where $\tree$ is any consistent decision tree with $\normgambles(\tree)=\mathcal{X}$ and $\treeevent{\tree}=A$. Because $\extoper$ is consistent and consequentialist, this choice function exists and does not depend on the choice of $\tree$.
\end{definition}

With these definitions, we can prove a slightly stronger version of Hammond's results~\cite[Theorem~5.4, Theorem~6, Theorem~7, and Theorem~8]{1988:hammond}.

\begin{theorem}\label{theorem:strong:hammond:theorem}
  A choice function $\opt$ satisfies Properties~\ref{prop:conditioning:property}, \ref{prop:intersection:property}, and~\ref{prop:mixture:property} if and only if there is a consistent and strongly consequentialist extensive form operator $\extoper$ such that $\opt_{\extoper}=\opt$.
\end{theorem}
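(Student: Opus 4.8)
The plan is to route both implications through the equivalence between the normal form operator $\normoper_{\opt}$ and an extensive form operator, so that the substantive work is delegated to the Subtree Perfectness Theorem~\ref{thm:conditions:to:be:factual}, to Theorem~\ref{thm:factual:norm:implies:corresponding:ext}, and to Lemma~\ref{lemma:corresponding:operators:both:factual:or:neither}. Throughout I will use that two operators are \emph{equivalent} exactly when $\normoper(\tree)=\nfd(\extoper(\tree))$ for every consistent $\tree$, that $\normgambles(\tree)=\normgambles(\nfd(\tree))$, and the identity $\normgambles(\normoper_{\opt}(\tree))=\opt(\normgambles(\tree)|\treeevent{\tree})$ of Eq.~\eqref{eq:gambofnormoptisopt}.

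For the ``only if'' direction I would assume $\opt$ satisfies Properties~\ref{prop:conditioning:property}, \ref{prop:intersection:property}, and~\ref{prop:mixture:property}. By Theorem~\ref{thm:conditions:to:be:factual}, $\normoper_{\opt}$ is subtree perfect, so Theorem~\ref{thm:factual:norm:implies:corresponding:ext} hands me an equivalent subtree perfect $\extoper$; in particular $\extoper$ is consistent. The three remaining demands on $\extoper$ I would then read off from the equivalence and Eq.~\eqref{eq:gambofnormoptisopt}. Consequentialism holds because $\normgambles(\extoper(\tree))=\normgambles(\nfd(\extoper(\tree)))=\normgambles(\normoper_{\opt}(\tree))=\opt(\normgambles(\tree)|\treeevent{\tree})$ depends only on $\normgambles(\tree)$ and $\treeevent{\tree}$, which is essentially Theorem~\ref{thm:normopt:strategically:equivalent}. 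Evaluating the same chain on a tree $\tree$ with $\normgambles(\tree)=\mathcal{X}$ and $\treeevent{\tree}=A$ (one exists whenever $\mathcal{X}$ is $A$-consistent) gives $\opt_{\extoper}(\mathcal{X}|A)=\opt(\mathcal{X}|A)$. Finally, strong consequentialism follows from Definition~\ref{def:normoper}: if $U\in\nfd(\tree)$ with $\normgambles(U)\subseteq\normgambles(\extoper(\tree))=\opt(\normgambles(\tree)|\treeevent{\tree})$, then $U\in\normoper_{\opt}(\tree)=\nfd(\extoper(\tree))$.

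For the ``if'' direction I would assume a consistent and strongly consequentialist $\extoper$ with $\opt_{\extoper}=\opt$. The key step is to establish that $\extoper$ and $\normoper_{\opt}$ are equivalent; Lemma~\ref{lemma:corresponding:operators:both:factual:or:neither} then transfers subtree perfectness from $\extoper$ to $\normoper_{\opt}$, and Theorem~\ref{thm:conditions:to:be:factual} delivers the three properties. Substituting $\opt=\opt_{\extoper}$ into Definition~\ref{def:normoper} yields
\begin{equation*}
  \normoper_{\opt}(\tree)=\{U\in\nfd(\tree)\colon\normgambles(U)\subseteq\normgambles(\extoper(\tree))\}.
\end{equation*}
The inclusion $\nfd(\extoper(\tree))\subseteq\normoper_{\opt}(\tree)$ is immediate, since each $U\in\nfd(\extoper(\tree))$ lies in $\nfd(\tree)$ and satisfies $\normgambles(U)\subseteq\normgambles(\nfd(\extoper(\tree)))=\normgambles(\extoper(\tree))$. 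The reverse inclusion is precisely where strong consequentialism enters: if $U\in\nfd(\tree)$ with $\normgambles(U)\subseteq\normgambles(\extoper(\tree))$, then strong consequentialism forces $U\in\nfd(\extoper(\tree))$.

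I expect the decisive point—and the reason the statement requires \emph{strong} consequentialism rather than plain consequentialism—to be this last inclusion $\normoper_{\opt}(\tree)\subseteq\nfd(\extoper(\tree))$. Mere consequentialism controls only the optimal gamble set $\normgambles(\extoper(\tree))$, and as the non-injectivity example of Section~\ref{sec:extensive:normal:form:solutions} shows, distinct normal form decisions can induce the same gamble, so membership of a decision's gamble in $\normgambles(\extoper(\tree))$ does not by itself place the decision in $\nfd(\extoper(\tree))$; only strong consequentialism closes this gap. Everything else is routine bookkeeping with Eq.~\eqref{eq:gambofnormoptisopt} and $\normgambles(\tree)=\normgambles(\nfd(\tree))$.
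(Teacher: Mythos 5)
Your proof is correct, but the two directions relate to the paper's proof differently. Your ``only if'' direction is essentially the paper's own argument: invoke Theorem~\ref{thm:conditions:to:be:factual} and Theorem~\ref{thm:factual:norm:implies:corresponding:ext} to get an equivalent subtree perfect $\extoper$, read off consequentialism from Theorem~\ref{thm:normopt:strategically:equivalent} via Eq.~\eqref{eq:gambofnormoptisopt}, and check strong consequentialism and $\opt_{\extoper}=\opt$ directly from Definition~\ref{def:normoper} (you spell out these last steps in more detail than the paper, which compresses them into ``by construction''). Your ``if'' direction, however, is genuinely different: the paper simply delegates it to Hammond's external proofs (his Theorems~5.4 and~7), remarking that those require only plain consequentialism, whereas you give a self-contained internal argument --- first proving the operator identity $\normoper_{\opt}(\tree)=\nfd(\extoper(\tree))$ (the inclusion $\nfd(\extoper(\tree))\subseteq\normoper_{\opt}(\tree)$ from Lemma~\ref{lemma:gambisgambnfd}, the reverse inclusion being literally the definition of strong consequentialism), then transferring subtree perfectness through Lemma~\ref{lemma:corresponding:operators:both:factual:or:neither}, and finally applying the necessity half of Theorem~\ref{thm:conditions:to:be:factual} (i.e., Lemma~\ref{lemma:conditioning:intersection:mixture:are:necessary:for:factuality}). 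Each route buys something: yours keeps the whole theorem inside the paper's machinery and makes explicit the equivalence $\normoper_{\opt}=\nfd\circ\extoper$ that the paper only discusses informally after the theorem; the paper's citation of Hammond establishes the formally stronger fact that mere (not strong) consequentialism already suffices for the ``if'' direction, which your argument does not show, since your reverse inclusion leans on strong consequentialism. That is no defect --- strong consequentialism is part of the theorem's hypothesis --- but note that your closing diagnosis slightly misplaces where strength is indispensable: in the paper's economy, strong consequentialism is needed to make the ``only if'' construction deliver the stated operator and to get the uniqueness/equivalence discussed afterwards, while the ``if'' direction survives without it.
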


\begin{proof}
  ``if''. Follow the approach of Hammond~\cite[Theorem~5.4 and Theorem~7]{1988:hammond}. Note that these proofs require only consequentialism.
  
  ``only if''. Suppose $\opt$ satisfies Properties~\ref{prop:conditioning:property}, \ref{prop:intersection:property}, and~\ref{prop:mixture:property}. By Theorems~\ref{thm:conditions:to:be:factual} and~\ref{thm:factual:norm:implies:corresponding:ext}, $\normoper_{\opt}$ is subtree perfect and has an equivalent subtree perfect extensive form operator $\extoper$. 
By definition of $\normoper_{\opt}$, for any strategically equivalent trees $\tree_1$ and $\tree_2$, $\normgambles(\normoper_{\opt}(\tree_1))=\normgambles(\normoper_{\opt}(\tree_2))$, and so the same holds for $\extoper$. Hence, $\extoper$ is consistent and consequentialist. By construction, $\extoper$ is also strongly consequentialist, and obviously also $\opt_{\extoper}=\opt$.
\end{proof}

It is easily seen that, for a particular choice function $\opt$, there is exactly one consistent and strongly consequentialist extensive form operator that induces $\opt$. Therefore, there is an equivalence between the consistent and \emph{strongly} consequentialist extensive form operator inducing $\opt$ and the subtree perfect normal form operator induced by $\opt$. This equivalence is \emph{not} present in Hammond's account, since 
multiple consistent and (\emph{not strongly}) consequentialist extensive form operators can induce the same choice function.

For example, if $\tree$ has two normal form decisions inducing the same gamble $X$, $\nfd(\extoper_1(\tree))$ could only include one, while $\nfd(\extoper_2(\tree))$ includes both, without violating subtree perfectness. Moreover, $\normgambles(\tree)$ contains $X$ so strategic equivalence is preserved as required. Hence, both operators can be consistent and consequentialist, 
however
only at most one of these can be equivalent to $\normoper_{\opt}$. Indeed, 
a consistent and consequentialist extensive form operator can \emph{only} be equivalent to its corresponding $\normoper_{\opt}$ if it is \emph{strongly} consequentialist.

As with Hammond, McClennen's \cite{1990:mcclennen} decision trees differ in that some chance nodes can have probabilities for events. 
Also, there seems to be no concept of conditioning in McClennen's account. As noted in Section~\ref{sec:forms}, McClennen's dynamic normal form solutions are more general types of normal form solutions. Some of his results \cite[Theorems~8.1 and~8.2]{1990:mcclennen} are similar to ours, and are based on three restrictions placed on his solutions. 

\begin{definition}[McClennen, \protect{\cite[p.~120]{1990:mcclennen}}]
  A dynamic normal form solution satisfies \emph{dynamic consistency} if, for every node $N$ in the tree, the restriction of the optimal set at the root node to $N$ is exactly the optimal set at $N$.
\end{definition}

Obviously, there is a one-to-one correspondence between dynamic normal form solutions satisfying dynamic consistency, and our normal form solutions. 

\begin{definition}[McClennen,\protect{\cite[p.~114]{1990:mcclennen}}]
  A dynamic normal form solution satisfies \emph{plan reduction} if, for every normal form decision in $\tree$ that induces the same gamble, either all or none of them are optimal.
\end{definition}

By definition, $\normoper_{\opt}$ satisfies plan reduction.

\begin{definition}[McClennen,\protect{\cite[p.~122]{1990:mcclennen}}]
  A dynamic normal form solution satisfies \emph{separability} if, for any tree $\tree$ and any node $N$ in $\tree$, the set of optimal plans at $N$ is the same as the set of optimal plans of the separate tree $\subtreeat{\tree}{N}$.
\end{definition}

On its own, separability is not exactly subtree perfectness, but if dynamic consistency holds then the two properties are equivalent. McClennen's two theorems can then be adapted into our setting as:

\begin{theorem}
  If a dynamic normal form solution satisfies plan reduction, dynamic consistency, and separability, then then it coincides with $\normoper_{\opt}$ for a choice function $\opt$ satisfying Properties~\ref{prop:conditioning:property}, \ref{prop:intersection:property}, and~\ref{prop:mixture:property}.
\end{theorem}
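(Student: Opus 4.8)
The plan is to reduce the statement to the subtree perfectness theorem (Theorem~\ref{thm:conditions:to:be:factual}). First I would use dynamic consistency to replace the dynamic normal form solution by an ordinary normal form operator $\normoper$: since the optimal set at any node $N$ is the restriction to $N$ of the optimal set at the root, the entire dynamic solution is determined by the root-level assignment $\tree\mapsto\normoper(\tree)$, which is a normal form operator in our sense. Combining dynamic consistency with separability then yields subtree perfectness of $\normoper$: separability says the optimal set at $N$ equals $\normoper(\subtreeat{\tree}{N})$, whereas dynamic consistency says it equals $\subtreeat{\normoper(\tree)}{N}$, so $\subtreeat{\normoper(\tree)}{N}=\normoper(\subtreeat{\tree}{N})$ at every reached node $N$, which is precisely subtree perfectness.

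Next I would construct the choice function. For an $A$-consistent set $\mathcal{X}$, pick any consistent tree $\tree$ with $\normgambles(\tree)=\mathcal{X}$ and $\treeevent{\tree}=A$, and set $\opt(\mathcal{X}|A)=\normgambles(\normoper(\tree))$; this is a legitimate choice function (non-empty and contained in $\mathcal{X}$) provided it does not depend on the representing tree, that is, provided $\normoper$ respects strategic equivalence. This well-definedness is the main obstacle, because subtree perfectness alone does not force an operator to arise from a choice function. The idea is to bring in plan reduction here. Given two trees $\tree_1,\tree_2$ with the same normal form gambles $\mathcal{X}$ and the same conditioning event $A$, I would form $\tree'=\tree_1\decnodeunion\tree_2$, so that $\normgambles(\tree')=\mathcal{X}$ and $\treeevent{\tree'}=A$. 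Writing $N_1,N_2$ for the two children, plan reduction (optimality of a plan depends only on its gamble) gives $\normgambles(\subtreeat{\normoper(\tree')}{N_j})=\normgambles(\normoper(\tree'))\cap\normgambles(\tree_j)=\normgambles(\normoper(\tree'))$ for $j=1,2$, and subtree perfectness at each $N_j$ then gives $\normgambles(\normoper(\tree_j))=\normgambles(\normoper(\tree'))$, whence $\normgambles(\normoper(\tree_1))=\normgambles(\normoper(\tree_2))$.

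Finally I would identify $\normoper$ with $\normoper_{\opt}$ and invoke Theorem~\ref{thm:conditions:to:be:factual}. For any consistent $\tree$, the tree $\tree$ itself represents $\normgambles(\tree)$ with event $\treeevent{\tree}$, so by well-definedness $\opt(\normgambles(\tree)|\treeevent{\tree})=\normgambles(\normoper(\tree))$; hence $\normoper_{\opt}(\tree)$ consists of exactly those $U\in\nfd(\tree)$ whose gamble lies in $\normgambles(\normoper(\tree))$, which by plan reduction is precisely $\normoper(\tree)$, so $\normoper_{\opt}=\normoper$. Since $\normoper$ is subtree perfect, so is $\normoper_{\opt}$, and the ``only if'' direction of Theorem~\ref{thm:conditions:to:be:factual} delivers Properties~\ref{prop:conditioning:property}, \ref{prop:intersection:property}, and~\ref{prop:mixture:property}. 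The only points needing care along the way are the non-emptiness of the optimal sets (so that each $N_j$ above is genuinely reached by an optimal plan) and the observation that separability, stated for all nodes, indeed implies the reached-node version of subtree perfectness used in Theorem~\ref{thm:conditions:to:be:factual}.
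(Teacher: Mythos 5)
Your proposal is correct, and at its core it runs through the same material as the paper: the paper's proof is a one-line deferral to Lemma~\ref{lemma:conditioning:intersection:mixture:are:necessary:for:factuality}, i.e.\ it re-runs the two tree constructions of Figure~\ref{fig:conditioning:intersection:mixture:are:necessary:for:factuality} directly on the dynamic solution, whereas you instead modularize: reduce to an ordinary normal form operator $\normoper$ (dynamic consistency), extract subtree perfectness (separability at reached nodes), construct $\opt$ and prove $\normoper=\normoper_{\opt}$, and then cite the ``only if'' direction of Theorem~\ref{thm:conditions:to:be:factual} instead of redoing its proof. The genuine added value of your organization is the explicit well-definedness step: the theorem asserts the \emph{existence} of a choice function, and since plan reduction only constrains plans within a single tree, one must check that $\normgambles(\normoper(\tree))$ depends only on $(\normgambles(\tree),\treeevent{\tree})$ --- exactly the consequentialism issue the paper belabours for Hammond but leaves implicit here. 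Your gluing argument handles it correctly: for $\tree'=\tree_1\decnodeunion\tree_2$ with $\normgambles(\tree_1)=\normgambles(\tree_2)$, plan reduction (plus non-emptiness of the optimal set) guarantees both children are reached by optimal plans and that $\normgambles(\subtreeat{\normoper(\tree')}{N_j})=\normgambles(\normoper(\tree'))$, so subtree perfectness at each $N_j$ yields $\normgambles(\normoper(\tree_1))=\normgambles(\normoper(\tree_2))$; note that $\tree'$ is precisely the right-hand tree of Figure~\ref{fig:conditioning:intersection:mixture:are:necessary:for:factuality}, so even this step is in the spirit of the paper's intended argument. Your two flagged caveats are the right ones, and the second deserves the emphasis you give it: McClennen-style separability is stated at \emph{all} nodes, while the paper's subtree perfectness only quantifies over nodes reached by some optimal plan, and only the reached-node version is used (at unreached nodes the two McClennen conditions are in outright tension, which the paper's gloss ``equivalent under dynamic consistency'' quietly skirts). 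In short: same mathematics, but your write-up fills in the one step the paper's proof sketch genuinely omits.
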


\begin{proof}
  The proof is essentially identical to that of Lemma~\ref{lemma:conditioning:intersection:mixture:are:necessary:for:factuality}.
\end{proof}

\section{Conclusion}

We extended Selten's idea of subtree perfectness to decision trees, for normal and extensive for solutions. Subtree perfectness for extensive form solutions is Hammond's consistency condition. Subtree perfectness for normal form solutions is, under the assumption of dynamic consistency, McClennen's separability. We found necessary and sufficient conditions for a choice function to induce a subtree perfect normal form operator. These turned out to be similar to, but stronger than, those for backward induction to work. So, even if a normal form operator lacks subtree perfectness, it may still be possible to find the normal form solution by backward induction.

While many choice functions satisfy Property~\ref{prop:conditioning:property}, Properties~\ref{prop:intersection:property} and~\ref{prop:mixture:property} are perhaps more restrictive than one would like. Is violating subtree perfectness acceptable? We believe that subtree perfectness is a desirable property and one must think carefully before abandoning it. On the other hand, if one is attracted to the three properties for other reasons, then subtree perfectness gives them a strong justification, particularly since (at least for Properties~\ref{prop:intersection:property} and~\ref{prop:mixture:property}) they are much more difficult to justify in a static setting. Attempts to justify violation of subtree perfectness, without violating dynamic consistency, have however been made, for example by Machina~\cite{1989:machina} and McClennen~\cite[9.6]{1990:mcclennen}.

We recovered the well-known fact that subtree perfectness requires total preordering. Many choice functions suggested in the literature, such as maximality \cite[Sec.~3.9]{1991:walley} and E-admissibility \cite{1980:levi}, violate total preordering, and hence fail subtree perfectness. Interestingly, we can easily establish that some of these (particularly, maximality and E-admissibility) still admit backward induction.

If one is committed to the idea of subtree perfectness but also wishes to use a choice function that fails Property~\ref{prop:intersection:property}, then the best solution may be to use an extensive form operator. One can easily define subtree perfect normal form solutions based on choice functions (but not directly induced as in Definition~\ref{def:normoper}). These, however, can have unpleasant behaviour such as admitting pointwise dominated options. It is easier to avoid such behaviour with subtree perfect extensive form solutions, as proposed for instance by Seidenfeld \cite{1988:seidenfeld}, although Seidenfeld's idea will only work if the choice function can model complete ignorance. Roughly, this solution is found by backward induction, and when a decision node admits multiple optimal options, then it is treated as a chance node with complete ignorance about which of the optimal decisions is chosen.

We have seen that, when 
using choice functions on gambles, subtree perfectness is closely related to equivalence between extensive form and normal form operators, both using our approach of defining an operator based on a choice function, and Hammond's approach of defining a choice function based on an operator. Interestingly, normal-extensive form equivalence need \emph{not} hold for subtree perfect operators that are not induced by choice functions.

Although we have primarily investigated $\normoper_{\opt}$ in this paper, we do not argue that a normal form operator, and $\normoper_{\opt}$ in particular, gives the best solution to a decision tree. A normal form solution requires a policy for all eventualities to be specified and adhered to. The subject adheres to this policy only by his own resolution: he may of course have the ability to change his policy upon reaching a decision node \cite{1988:seidenfeld}. One could therefore argue that a normal form solution is only acceptable for sequential problems when the subject does not get the chance to change his mind (for example, if he instructs, in advance, others to carry out the actions).

In practice, applying $\opt$ to the set of all normal form gambles may be difficult. Therefore, we defined a normal form operator which yields a normal form solution by means of backward induction, and which will, in many cases, be easier to apply than $\normoper_{\opt}$. We found necessary and sufficient conditions on $\opt$ for our backward induction algorithm to yield exactly $\normoper_{\opt}$. As mentioned, we found that total preordering is not necessary for backward induction to work: the set of choice functions that satisfy the backward induction properties are a subset of those that satisfy subtree perfectness for normal form solutions and a superset of those that satisfy subtree perfectness. Hence, it remains unclear whether backward induction has any justification beyond practicality.

\section*{Acknowledgements}

The authors are indebted to Teddy Seidenfeld for suggesting the term `subtree perfectness', and to Wlodek Rabinowicz for suggesting the link between separability and subtree perfectness. EPSRC supports the first author.

\bibliography{imprecisetrees} 

\begin{thebibliography}{10}

\bibitem{1966:arrow}
K.~Arrow.
\newblock Exposition of the theory of choice under uncertainty.
\newblock {\em Synthese}, 16(3--4):253--269, Dec 1966.

\bibitem{1959:arrow}
Kenneth~J. Arrow.
\newblock Rational choice functions and orderings.
\newblock {\em Economica}, 26(102):121--127, May 1959.

\bibitem{2001:clemen}
Robert~T. Clemen and Terence Reilly.
\newblock {\em Making Hard Decisions}.
\newblock Duxbury, 2001.

\bibitem{2005:decooman}
G.~De~Cooman and M.~C.~M. Troffaes.
\newblock Dynamic programming for deterministic discrete-time systems with
  uncertain gain.
\newblock {\em International Journal of Approximate Reasoning},
  39(2-3):257--278, Jun 2005.

\bibitem{1999:gross::graph:theory}
Jonathan Gross and Jay Yellen.
\newblock {\em Graph Theory and Its Applications}.
\newblock CRC Press, London, 1999.

\bibitem{1988:hammond}
P.~Hammond.
\newblock Consequentialist foundations for expected utility.
\newblock {\em Theory and Decision}, 25(1):25--78, Jul 1988.

\bibitem{1950:houthakker}
H.~S. Houthakker.
\newblock Revealed preference and the utility function.
\newblock {\em Economica}, 17(66):159--174, May 1950.

\bibitem{2008:huntley:troffaes::impdectrees:smps}
Nathan Huntley and Matthias C.~M. Troffaes.
\newblock An efficient normal form solution to decision trees with lower
  previsions.
\newblock In Didier Dubois, M.~Asunci\'on Lubiano, Henri Prade,
  Mar\'ia~\'Angeles Gil, Przemyslaw Grzegorzewski, and Olgierd Hryniewicz,
  editors, {\em Soft Methods for Handling Variability and Imprecision},
  Advances in Soft Computing, pages 419--426. Springer, Sep 2008.

\bibitem{1999:jaffray}
J.~Jaffray.
\newblock Rational decision making with imprecise probabilities.
\newblock In {\em 1st International Symposium on Imprecise Probabilities and
  Their Applications}, 1999.

\bibitem{2005:kikuti}
D.~Kikuti, F.~Cozman, and C.P. de~Campos.
\newblock Partially ordered preferences in decision trees: Computing strategies
  with imprecision in probabilities.
\newblock In R.~Brafman and U.~Junker, editors, {\em IJCAI-05 Multidisciplinary
  Workshop on Advances in Preference Handling}, pages 118--123, 2005.

\bibitem{1986:lavalle}
I.~LaValle and K.~Wapman.
\newblock Rolling back decision trees requires the independence axiom!
\newblock {\em Management Science}, 32(3):382--385, Mar 1986.

\bibitem{1987:lavalle}
I.~H. LaValle and P.~C. Fishburn.
\newblock Equivalent decision trees and their associated strategy sets.
\newblock {\em Theory and Decision}, 23(1):37--63, 1987.

\bibitem{1980:levi}
I.~Levi.
\newblock {\em The Enterprise of Knowledge}.
\newblock MIT Press, London, 1980.

\bibitem{1985:lindley}
D.~V. Lindley.
\newblock {\em Making Decisions}.
\newblock Wiley, London, 2nd edition, 1985.

\bibitem{1957:luce}
R.~D. Luce and H.~Raiffa.
\newblock {\em Games and Decisions: introduction and critical survery}.
\newblock Wiley, 1957.

\bibitem{1989:machina}
M.~J. Machina.
\newblock Dynamic consistency and non-expected utility models of choice under
  uncertainty.
\newblock {\em Journal of Economic Literature}, 27(1622-1688), 1989.

\bibitem{1990:mcclennen}
E.~F. McClennen.
\newblock {\em Rationality and Dynamic Choice: Foundational Explorations}.
\newblock Cambridge University Press, 1990.

\bibitem{1973:plott}
C.~R. Plott.
\newblock Path independence, rationality, and social choice.
\newblock {\em Econometrica}, 41(6):1075--1091, Nov 1973.

\bibitem{1954:radnermarschak}
R.~Radner and J.~Marschak.
\newblock Note on some proposed decision criteria.
\newblock In R.~M. Thrall, C.~H Coombs, and R.~L. Davies, editors, {\em
  Decision Processes}, pages 61--68. John Wiley, 1954.

\bibitem{1961:raiffa}
H.~Raiffa and R.~Schlaifer.
\newblock {\em Applied Statistical Decision Theory}.
\newblock Harvard University Press, 1961.

\bibitem{1988:seidenfeld}
T.~Seidenfeld.
\newblock Decision theory without `independence' or without `ordering': What is
  the difference?
\newblock {\em Economics and Philosophy}, 4:267--290, 1988.

\bibitem{2004:seidenfeld}
T.~Seidenfeld.
\newblock A contrast between two decision rules for use with (convex) sets of
  probabilities: {$\Gamma$}-maximin versus {$E$}-admissibility.
\newblock {\em Synthese}, 140:69--88, 2004.

\bibitem{1975:selten}
R.~Selten.
\newblock Reexamination of the perfectness concept for equilibrium points in
  extensive games.
\newblock {\em International Journal of Game Theory}, 4(1):25--55, Mar 1975.

\bibitem{1977:sen}
A.~K. Sen.
\newblock Social choice theory: A re-examination.
\newblock {\em Econometrica}, 45(1):53--89, 1977.

\bibitem{1951:ville}
Jean Ville and P.~K. Newman.
\newblock The existence-conditions of a total utility function.
\newblock {\em The Review of Economic Studies}, 19(2):123--128, 1951.

\bibitem{1991:walley}
P.~Walley.
\newblock {\em Statistical Reasoning with Imprecise Probabilities}.
\newblock Chapman and Hall, London, 1991.

\end{thebibliography}
\bibliographystyle{plain}

\newpage

\appendix

\section{Proofs}

\subsection{Proof of Eq.~\eqref{eq:normgambles:tree:is:normgambles:nfd:tree}}

\begin{lemma}\label{lemma:gambisgambnfd}
  For any decision tree $\tree$, $\normgambles(\tree)=\normgambles(\nfd(\tree))$.
\end{lemma}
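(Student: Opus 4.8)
The plan is to prove the identity by structural induction on the decision tree $\tree$, mirroring exactly the three recursive clauses of Definition~\ref{def:normgambles}. The one ingredient that must be pinned down before the induction can run is how the operator $\nfd$ itself decomposes along the tree structure, since $\nfd$ was only described informally as the set of extensive form solutions retaining a single arc at each decision node. I would first record two decomposition facts. At a chance node $\tree=\bigchancenodemixture_{i=1}^n E_i \tree_i$ no arc is a choice, so a normal form decision keeps every chance arc and selects, independently, a normal form decision within each branch; hence $\nfd(\tree)=\bigchancenodemixture_{i=1}^n E_i \nfd(\tree_i)$ as sets of trees, matching the mixture operator on sets of trees. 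At a decision node $\tree=\bigdecnodeunion_{i=1}^n \tree_i$ a normal form decision first selects exactly one branch $i$ and then a normal form decision within $\tree_i$, the chosen branch sitting behind a single-option decision arc; writing $\decnodeunion U$ for this prefixing, we get $\nfd(\tree)=\bigcup_{i=1}^n\{\decnodeunion U\colon U\in\nfd(\tree_i)\}$.

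For the base case, if $\tree$ is a single leaf with reward $\reward$ then $\nfd(\tree)=\{\tree\}$, so $\normgambles(\nfd(\tree))=\normgambles(\tree)=\{\reward\}$ by Eq.~\eqref{eq:normgambles:rewards}. For the chance-node step, combining $\nfd(\tree)=\bigchancenodemixture_{i=1}^n E_i \nfd(\tree_i)$ with the set-of-trees rule Eq.~\eqref{eq:normgambles:chancenodes:for:sets:of:trees} gives $\normgambles(\nfd(\tree))=\biggambplus_{i=1}^n E_i \normgambles(\nfd(\tree_i))$; the induction hypothesis $\normgambles(\nfd(\tree_i))=\normgambles(\tree_i)$ followed by Eq.~\eqref{eq:normgambles:chancenodes} then yields $\normgambles(\tree)$.

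For the decision-node step I would use that prefixing a single-option decision node does not change the gambles, i.e.\ $\normgambles(\decnodeunion U)=\normgambles(U)$ (this is Eq.~\eqref{eq:normgambles:decisionnodes} with $n=1$). Together with $\normgambles(\setoftrees)=\bigcup_{\tree\in\setoftrees}\normgambles(\tree)$ and the decomposition above, this gives
\begin{equation*}
  \normgambles(\nfd(\tree))=\bigcup_{i=1}^n\normgambles(\nfd(\tree_i))=\bigcup_{i=1}^n\normgambles(\tree_i)=\normgambles(\tree),
\end{equation*}
where the middle equality is the induction hypothesis and the last is Eq.~\eqref{eq:normgambles:decisionnodes}, closing the case.

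The main obstacle is not computational: once the two $\nfd$ decomposition facts are in hand, every step is a direct substitution of an already-stated equation. The real care lies in justifying those facts precisely from the informal description of a normal form decision, in particular the decision-node case, where each strategy of a subtree $\tree_i$ reappears behind a trivial single-option decision arc. I expect this bookkeeping to be routine but slightly tedious, which is presumably why it is deferred to the appendix; the observation that such a trivial prefix leaves $\normgambles$ unchanged is exactly what lets the decision-node step go through without the prefix ever mattering.
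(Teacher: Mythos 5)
Your proposal is correct and follows essentially the same route as the paper's own proof: structural induction with the identical decompositions of $\nfd$ at chance nodes ($\nfd(\tree)=\bigchancenodemixture_{i=1}^n \event_i \nfd(\tree_i)$) and decision nodes (strategies of the $\tree_i$ prefixed by a single-option decision arc), closed by the same observation that $\normgambles(\decnodeunion U)=\normgambles(U)$. Your added remark that this last identity is just Eq.~\eqref{eq:normgambles:decisionnodes} with $n=1$ is a small justification the paper leaves implicit, but it does not change the argument.
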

\begin{proof}
  We prove this by structural induction. In the base step, we prove the equality for trees comprising only one node. In the induction step, we prove that if the equality holds for the subtrees at every child of the root node of $\tree$, then the equality also holds for $\tree$.
  
  If $\tree$ consists of only a single node, namely a reward node, then $\nfd(\tree)=\{\tree\}$ and the result holds trivially. Thus the base step is confirmed.
  
  Suppose $\tree$ has a chance node at the root, that is, $\tree=\bigchancenodemixture_{i=1}^n \event_i \tree_i$. Each element of $\nfd(\tree)$ is of the form $\bigchancenodemixture_{i=1}^n \atree_i$, where $\atree_i \in \nfd(\tree_i)$. In other words, $\nfd(\tree)$ is the set of all possible mixtures of the elements of $\nfd(\tree_i)$, that is,
  \begin{equation*}
    \nfd(\tree)=\bigchancenodemixture_{i=1}^n \event_i \nfd(\tree_i).
  \end{equation*}
  The induction hypothesis is $\normgambles(\tree_i)=\normgambles(\nfd(\tree_i))$ for each $i$. We have
  \begin{align*}
    \normgambles(\nfd(\tree)) &= \normgambles\Bigg(\bigchancenodemixture_{i=1}^n \event_i \nfd(\tree_i)\Bigg) \\
    &= \biggambplus_{i=1}^n \event_i \normgambles(\nfd(\tree_i)) \\
    &= \biggambplus_{i=1}^n \event_i \normgambles(\tree_i) \\
    &= \normgambles(\tree).
  \end{align*}
  
  On the other hand, if $\tree$ has a decision node as a root, that is, $\tree=\bigdecnodeunion_{i=1}^n \tree_i$, then
  \begin{equation*}
    \nfd(\tree)= \Bigg\{\decnodeunion U\colon U \in \bigcup_{i=1}^n \nfd(\tree_i)\Bigg\},
  \end{equation*}
  and, since $\normgambles(\decnodeunion U) = \normgambles(U)$ for any $U$, 
  \begin{align*}
    \normgambles(\nfd(\tree))  &= \Bigg\{\normgambles(\decnodeunion U)\colon U\in\bigcup_{i=1}^n \nfd(\tree_i)\Bigg\}\\
    &= \Bigg\{\normgambles(U)\colon U\in\bigcup_{i=1}^n \nfd(\tree_i)\Bigg\}\\
    &= \bigcup_{i=1}^n \normgambles(\nfd(\tree_i)) \\
    \intertext{and again, the induction hypothesis says that $\normgambles(\tree_i)=\normgambles(\nfd(\tree_i))$ for each $i$, so}
    &= \bigcup_{i=1}^n \normgambles(\tree_i) \\
    &= \normgambles(\tree).
  \end{align*}
  This completes the induction step.
\end{proof}

\subsection{Equivalence of Definition~\ref{def:gambles:consistent}\ref{def:gambles:consistent:via:trees} and~\ref{def:gambles:consistent:via:inverse:map}}

\begin{proof}[Proof of equivalence]
  \ref{def:gambles:consistent:via:trees}$\implies$\ref{def:gambles:consistent:via:inverse:map}. We prove the implication by structural induction on the tree $\tree$. In the base step, we prove that, for every $\mathcal{X}$ and non-empty event $A$, the implication holds for consistent decision trees which consist of only a single node. In the induction step, we prove that if, for all $\mathcal{X}$ and non-empty $A$, the implication holds for the subtrees at every child of the root node, then, for all $\mathcal{X}$ and non-empty $A$, the implication also holds for the whole tree.

  First, if $\tree$ consists of only a single node, namely a reward node, then $\normgambles(\tree)=\{s\}$ for some $s\in\rewardset$, so by assumption, $\mathcal{X}=\normgambles(\tree)=\{X\}$ where $X$ is the gamble yielding a constant value $s$. Clearly, $X^{-1}(r)=\emptyset$ for all $r\neq s$ and $X^{-1}(s)=\pspace$, hence indeed $X^{-1}(r)\cap A=A\neq\emptyset$ whenever $X^{-1}(r)\neq\emptyset$, and this for every non-empty event $A$.
  
  Next, suppose $\tree$ has a chance node as its root, that is, $\tree=\bigchancenodemixture_{i=1}^n E_i \tree_i$. By assumption, and by the definition of $\normgambles$ (see Definition~\ref{def:normgambles}),
  \begin{equation*}
    \mathcal{X}=\normgambles(\tree)=\biggambplus_{i=1}^n \event_i\normgambles(\tree_i)=\biggambplus_{i=1}^n \event_i\mathcal{X}_i
  \end{equation*}
  where $\mathcal{X}_i$ is a shorthand notation for $\normgambles(\tree_i)$. Also, by assumption, $\treeevent{\tree}=A$. By the induction hypothesis, we know that for every $r\in\rewardset$ and every $X_i\in\mathcal{X}_i$ such that $X_i^{-1}(r)\neq\emptyset$, it holds that $X_i^{-1}(r)\cap A\cap E_i\neq\emptyset$ (indeed, $\treeevent{\tree_i}=\treeevent{\tree}\cap E_i=A\cap E_i$). Now we have all ingredients to prove the desired implication. Indeed, for any $X\in\mathcal{X}$, or equivalently, for any $X=\biggambplus_{i=1}^n\event_i X_i$ with $X_i\in\mathcal{X}_i$,
  \begin{equation}
    \label{eq:def:gambles:consistent:helper:1}
    X^{-1}(r)
    =\left(\biggambplus_{i=1}^n\event_i X_i\right)^{-1}(r)
    =\bigcup_{i=1}^n X_i^{-1}(r)\cap\event_i
  \end{equation}
  Hence Eq.~\eqref{eq:def:gambles:consistent:helper:1} implies that $X^{-1}(r)\neq\emptyset$ whenever $X_i^{-1}(r)\cap\event_i\neq\emptyset$ for at least one $i\in\{1,\dots,n\}$, and in that case, it obviously follows that also $X_i^{-1}(r)\neq\emptyset$, which implies, as just shown, that $X_i^{-1}(r)\cap A\cap E_i\neq\emptyset$. But, then, again by Eq.~\eqref{eq:def:gambles:consistent:helper:1}, it must also hold that $X^{-1}(r)\cap A\neq\emptyset$.

  Finally, suppose that the root of $\tree$ is a decision node, that is, $\tree=\bigdecnodeunion_{i=1}^n \tree_i$. By assumption, and by the definition of $\normgambles$ (see Definition~\ref{def:normgambles}),
  \begin{equation*}
    \mathcal{X}=\normgambles(\tree)=\bigcup_{i=1}^n \normgambles(\tree_i)=\bigcup_{i=1}^n\mathcal{X}_i
  \end{equation*}
  where $\mathcal{X}_i$ is a shorthand notation for $\normgambles(\tree_i)$. Also, by assumption, $\treeevent{\tree}=A$. By the induction hypothesis, we know that for every $r\in\rewardset$ and every $X_i\in\mathcal{X}_i$ such that $X_i^{-1}(r)\neq\emptyset$, it holds that $X_i^{-1}(r)\cap A\neq\emptyset$ (indeed, $\treeevent{\tree_i}=\treeevent{\tree}=A$). But, for any $X\in\mathcal{X}$, it follows that $X=X_i$ for some $X_i\in\mathcal{X}_i$, so the desired implication follows immediately from the induction hypothesis.

  \ref{def:gambles:consistent:via:inverse:map}$\implies$\ref{def:gambles:consistent:via:trees}. Suppose that for every $r\in\rewardset$ and every $X\in\mathcal{X}$ such that $X^{-1}(r)\neq\emptyset$, it holds that $X^{-1}(r)\cap A\neq\emptyset$. Consider the decision tree
  \begin{equation*}
    \tree=\bigdecnodeunion_{X\in\mathcal{X}}\bigchancenodemixture_{\substack{r\in\rewardset \\ X^{-1}(r)\neq\emptyset}}X^{-1}(r)r
  \end{equation*}
  with $\treeevent{\tree}=A$. Let $N(X)$ denote the chance node of $\tree$ associated with $X$, and let $N(X,r)$ denote the reward node of $\tree$ associated with $X$ and $r$ (of course $N(X,r)$ only exists for $X^{-1}(r)\neq\emptyset$, by definition of $\tree$).

  Clearly, $\tree$ is consistent, because $\treeevent{\subtreeat{\tree}{N(X)}}=\treeevent{\tree}=A\neq\emptyset$ and $\treeevent{\subtreeat{\tree}{N(X,r)}}=\treeevent{\subtreeat{\tree}{N(X)}}\cap X^{-1}(r)=A\cap X^{-1}(r)\neq\emptyset$ by assumption, and
  \begin{align*}
    \normgambles(\tree)
    &=\bigcup_{X\in\mathcal{X}}\normgambles\left(\bigchancenodemixture_{\substack{r\in\rewardset \\ X^{-1}(r)\neq\emptyset}}X^{-1}(r) r\right)
    \\
    &
    =\bigcup_{X\in\mathcal{X}}\left\{\biggambplus_{\substack{r\in\rewardset \\ X^{-1}(r)\neq\emptyset}}X^{-1}(r) r\right\}
    =\bigcup_{X\in\mathcal{X}}\{X\}=\mathcal{X}
  \end{align*}
  which establishes \ref{def:gambles:consistent:via:trees}.
\end{proof}

\subsection{Proof of Lemma~\ref{lemma:strong:path:independence:equivalence}}

\begin{proof}
  In this proof, $A$ is a non-empty event and all gambles are $A$-consistent.
  
  Property~\ref{prop:intersection:property}$\implies$Property~\ref{prop:very:strong:path:independence}.
  Let $\mathcal{X}_1$, \dots ,$\mathcal{X}_n$ be non-empty finite sets of gambles, and let $\mathcal{X}=\bigcup_{i=1}^n \mathcal{X}_i$. If $\opt(\mathcal{X}|A)\cap\mathcal{X}_k\neq\emptyset$, then $\opt(\mathcal{X}_k|A)=\opt(\mathcal{X}|A)\cap\mathcal{X}_k$. Hence,
  \begin{equation*}
    \opt(\mathcal{X}|A)
    =\bigcup_{k=1}^n\opt(\mathcal{X}|A)\cap\mathcal{X}_k
    =\bigcup_{\substack{k=1\\\mathcal{X}_k\cap\opt(\mathcal{X}|A)\neq\emptyset}}^n\opt(\mathcal{X}_k|A)
  \end{equation*}

  Property~\ref{prop:very:strong:path:independence}$\implies$Property~\ref{prop:strong:path:independence}. Immediate.
  
  Property~\ref{prop:strong:path:independence}$\implies$Property~\ref{prop:total:order:property}. Define $X\succeq_A Y$ if $X\in\opt(\{X,Y\}|A)$. First, we prove that $\succeq_A$ is a total preorder (i.e. total, reflexive, and transitive). Clearly, $\succeq_A$ is total since $X\in\opt(\{X,Y\}|A)$ or $Y\in\opt(\{X,Y\}|A)$, hence $X\succeq_A Y$ or  $Y\succeq_A X$, for all normal form decisions $X$ and $Y$. Obviously, $\succeq_A$ is reflexive. Is $\succeq_A$ transitive? Suppose $X\succeq_A Y$ and $Y\succeq_A Z$.

  By Property~\ref{prop:strong:path:independence},
  \begin{equation*}
    \opt(\{X,Y,Z\}|A)
    =
    \begin{cases}
      \opt(\{Y,Z\}|A), & \text{or} \\
      \{X\}, & \text{or} \\
      \opt(\{Y,Z\}|A)\cup\{X\}.
    \end{cases}
  \end{equation*}
  Since, $Y\succeq_A Z$, it follows that $\{X,Y\}\cap\opt(\{X,Y,Z\}|A)\neq\emptyset$.

  Again, by Property~\ref{prop:strong:path:independence},
  \begin{equation*}
    \opt(\{X,Y,Z\}|A)
    =
    \begin{cases}
      \opt(\{X,Y\}|A), & \text{or} \\
      \{Z\}, & \text{or} \\
      \opt(\{X,Y\}|A)\cup\{Z\}.
    \end{cases}
  \end{equation*}
  The case $\opt(\{X,Y,Z\}|A)=\{Z\}$ cannot occur however, because we just showed that $\{X,Y\}\cap\opt(\{X,Y,Z\}|A)\neq\emptyset$. Hence, because $X\in\opt(\{X,Y\}|A)$, it follows that $X\in\opt(\{X,Y,Z\}|A)$.

  Once more by Property~\ref{prop:strong:path:independence},
  \begin{equation*}
    \opt(\{X,Y,Z\}|A)
    =
    \begin{cases}
      \opt(\{X,Z\}|A), & \text{or} \\
      \{Y\}, & \text{or} \\
      \opt(\{X,Z\}|A)\cup\{Y\}.
    \end{cases}
  \end{equation*}
  We just showed that $X\in\opt(\{X,Y,Z\}|A)$, hence the second case cannot occur, and it can only be that also $X\in\opt(\{X,Z\}|A)$, establishing $X\succeq_A Z$.

  Finally, we prove that
  \begin{align*}
    \opt(\mathcal{X}|A)
    =
    \{X\in\mathcal{X}\colon(\forall Y\in\mathcal{X})(X\succeq_A Y)\},
  \end{align*}
  or equivalently, we prove for any $X\in\mathcal{X}$ that $X\in\opt(\mathcal{X}|A)$ if and only if $X\in\opt(\{X,Y\}|A)$ for all $Y\in\mathcal{X}$.

  Indeed, by Property~\ref{prop:strong:path:independence}, for any $X$ and $Y$ in $\mathcal{X}$, it holds that
  \begin{equation*}
    \opt(\mathcal{X}|A)=
    \begin{cases}
      \opt(\{X,Y\}|A), & \text{or} \\
      \opt(\mathcal{X}\setminus\{X,Y\}|A), & \text{or} \\
      \opt(\{X,Y\}|A)\cup\opt(\mathcal{X}\setminus\{X,Y\}|A).
    \end{cases}
  \end{equation*}
  and hence, if $X\in\opt(\mathcal{X}|A)$ then the second option is impossible and therefore $X\in\opt(\{X,Y\}|A)$ for all $Y\in\mathcal{Y}$.

  Conversely, again by Property~\ref{prop:strong:path:independence}
  \begin{equation*}
    \opt(\mathcal{X}|A)
    =\opt\Bigg(\bigcup_{Y\in\mathcal{X}}\{X,Y\}\Bigg|A\Bigg)
    =\bigcup_{Y\in\mathcal{Y}}\opt(\{X,Y\}|A)
  \end{equation*}
  for some subset $\mathcal{Y}$ of $\mathcal{X}$, and hence,
  if $X\in\opt(\{X,Y\}|A)$ for all $Y\in\mathcal{X}$, then $X\in\opt(\mathcal{X}|A)$.

  Property~\ref{prop:total:order:property}$\implies$Property~\ref{prop:intersection:property}. Assume that $\opt(\mathcal{X}|A)\cap\mathcal{Y}\neq\emptyset$. This means that there must be an $Y^*\in\mathcal{Y}$ such that $Y^*\succeq_A X$ for all $X\in\mathcal{X}$. Clearly, $Y^*\in\opt(\mathcal{Y}|A)$. But, for all $Y\in\opt(\mathcal{Y}|A)$ it must also hold that $Y\succeq_A Y^*$, and hence $Y\succeq_A X$ for all $X\in\mathcal{X}$ as $\succeq_A$ is transitive. We conclude:
  \begin{equation*}
    \opt(\mathcal{Y}|A)
    =
    \{Y\in\mathcal{Y}\colon(\forall X\in\mathcal{X})(Y\succeq_A X)\}
    =
    \opt(\mathcal{X}|A)\cap\mathcal{Y}
  \end{equation*}
\end{proof}

\subsection{Proof of Lemma~\ref{lemma:factuality:at:children}}

\begin{proof}
  If $N$ is the root of $\tree$, then the statement is trivial. If $N\in\children(\tree)$, then the statement follows from \ref{lemma:factuality:at:children:childcondition}. Otherwise, $N$ must belong to $\subtreeat{\tree}{K}$ for some $K\in\children(\tree)$.

  First, note that $K$ is a node of at least one element of $\normoper(\tree)$. Indeed, it is given that $N$ is a node of at least one element, say $\atree$, of $\normoper(\tree)$. Then, obviously, $K$ must also be a node of $\atree$, simply because any node on the unique path within $\tree$ between the root of $\tree$ and $N$ must be a node of $\atree$, and one of those nodes is $K$. So, $K$ is a node of an element of $\normoper(\tree)$ (namely, $U$).

  Secondly, note that $N$ is also a node of at least one element of $\normoper(\subtreeat{\tree}{K})$. Indeed, $N$ is a node of an element of $\normoper(\tree)$, and hence, in particular also of $\subtreeat{\normoper(\tree)}{K}$. But, by \ref{lemma:factuality:at:children:childcondition}, and the fact that $K$ is a node of at least one element of $\normoper(\tree)$ (as just proven), it follows that $\subtreeat{\normoper(\tree)}{K}=\normoper(\subtreeat{\tree}{K})$. Hence, $N$ is also a node of at least one element of $\normoper(\subtreeat{\tree}{K})$.

  Combining everything, it follows that
  \begin{align*}
    \normoper(\subtreeat{\tree}{N})
    &=\normoper(\subtreeat{\subtreeat{\tree}{K}}{N}) \\
    \intertext{so, by \ref{lemma:factuality:at:children:grandchildcondition}, and because $N$ is in at least one element of $\normoper(\subtreeat{\tree}{K})$,}
    &=\subtreeat{\normoper(\subtreeat{\tree}{K})}{N} \\
    \intertext{hence, by \ref{lemma:factuality:at:children:childcondition}, and since $K$ is in at least one element of $\normoper(\tree)$,}
    &=\subtreeat{\subtreeat{\normoper(\tree)}{K}}{N}
    =\subtreeat{\normoper(\tree)}{N}.
  \end{align*}
\end{proof}

\subsection{Proof of Lemma~\ref{lemma:opt:of:setsums:strong:equality}}

\begin{proof}
  The statement is trivial if $n=1$ (because, in that case, $A_1=\pspace$). Let us prove the statement also in case $n\ge 2$.
  
  Let $\mathcal{X}=\biggambplus_{i=1}^nA_i\mathcal{X}_i$. Consider any $k\in\{1,\dots,n\}$ and let $$\mathcal{Z}_k=\biggambplus_{j\neq k} A'_j \mathcal{X}_j$$
where $(A'_j)_{j\neq k}$ forms an arbitrary partition of $\pspace$ such that $\compl{A}_k\cap A'_j=A_j$ for all $j\neq k$. Clearly, $\mathcal{Z}_k$ is $\compl{A}_k\cap B$-consistent because we can trivially find a consistent decision tree $\tree$ with $\treeevent{\tree}=\compl{A}_k\cap B$ and $\normgambles(\tree)=\mathcal{Z}_k$, using the $A_j \cap B$-consistency (and hence, $\compl{A}_k\cap A'_j \cap B$-consistency) of each $\mathcal{X}_j$ for $j\neq k$.

Now, observe that by construction of $\mathcal{Z}_k$,
\begin{equation*}
\mathcal{X}=A_k \mathcal{X}_k \gambplus \compl{A}_k \mathcal{Z}_k
 =\bigcup_{Z_k\in\mathcal{Z}_k} (A_k \mathcal{X}_k \gambplus \compl{A}_k Z_k).
\end{equation*}
Note that $\mathcal{X}$ is $B$-consistent (indeed, because each $\mathcal{X}_i$ is $A_i \cap B$-consistent, we can trivially find a consistent decision tree $\tree$ with $\treeevent{\tree}=B$ and $\normgambles(\tree)=\mathcal{X}$).

  Since Property~\ref{prop:intersection:property} holds, Property~\ref{prop:strong:path:independence} holds as well by Lemma~\ref{lemma:strong:path:independence:equivalence}. So, if we apply $\opt(\cdot|B)$ on both sides of the above equality, then it follows from Property~\ref{prop:strong:path:independence} that
  \begin{align}
    \nonumber
    \opt(\mathcal{X}|B)
    &=
    \nonumber
    \bigcup_{Z_k\in\mathcal{Z}_k^*}
    \opt(A_k\mathcal{X}_k\gambplus\compl{A}_k Z_k|B),\\
    \intertext{
      for some $\mathcal{Z}_k^*\subseteq\mathcal{Z}_k$.
      By Property~\ref{prop:mixture:property},}
    &=
    \nonumber
    \bigcup_{Z_k\in\mathcal{Z}_k^*}
    (A_k\opt(\mathcal{X}_k|A_k\cap B) \gambplus \compl{A}_k Z)\\
    &=
    \label{eq:opt:of:setsums:strong:equality:help:1}
    A_k\opt(\mathcal{X}_k|A_k\cap B) \gambplus \compl{A}_k\mathcal{Z}_k^*
  \end{align}
  Since this holds for each $k\in\{1,\dots,n\}$, we arrive at Eq.~\eqref{eq:opt:of:setsums:strong:equality}, by Lemma~\ref{lemma:setsums:equality}.
\end{proof}

We used the following lemma.

\begin{lemma}\label{lemma:setsums:equality}
  Let $A_1$, \dots, $A_n$ be a finite partition of $\pspace$, $n\ge 2$. Let $\mathcal{X}$, and $\mathcal{X}_1$, $\mathcal{Z}_1$, \dots, $\mathcal{X}_n$, $\mathcal{Z}_n$ be non-empty finite sets of gambles. If
  \begin{equation*}
    \mathcal{X}=A_k\mathcal{X}_k\gambplus\compl{A}_k \mathcal{Z}_k
  \end{equation*}
  for all $k\in\{1,\dots,n\}$, then
  \begin{equation*}
    \mathcal{X}=\biggambplus_{i=1}^nA_i\mathcal{X}_i.
  \end{equation*}
\end{lemma}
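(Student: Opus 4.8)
The plan is to prove the two set inclusions separately, exploiting throughout that since $A_1,\dots,A_n$ is a partition of $\pspace$, every gamble $X$ is recovered from its restrictions to the cells via $X=\biggambplus_{i=1}^n A_i X$. I also read the hypothesis $\mathcal{X}=A_k\mathcal{X}_k\gambplus\compl{A}_k\mathcal{Z}_k$ in the same sense as in the proof of Lemma~\ref{lemma:opt:of:setsums:strong:equality}, namely as the full product $\{A_k X\gambplus\compl{A}_k Z\colon X\in\mathcal{X}_k,\ Z\in\mathcal{Z}_k\}$.

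The inclusion $\mathcal{X}\subseteq\biggambplus_{i=1}^n A_i\mathcal{X}_i$ is the easy direction. Given $X\in\mathcal{X}$, I would apply the hypothesis with $k=i$ for each $i$: since $\mathcal{X}=A_i\mathcal{X}_i\gambplus\compl{A}_i\mathcal{Z}_i$, there is some $X_i\in\mathcal{X}_i$ with $A_i X=A_i X_i$. Reassembling over the partition gives $X=\biggambplus_{i=1}^n A_i X=\biggambplus_{i=1}^n A_i X_i$, which lies in $\biggambplus_{i=1}^n A_i\mathcal{X}_i$.

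The substance is the reverse inclusion $\biggambplus_{i=1}^n A_i\mathcal{X}_i\subseteq\mathcal{X}$, a ``rectangularity'' claim: the hypotheses only assert that $\mathcal{X}$ factorizes along each coarse two-cell partition $\{A_k,\compl{A}_k\}$, and from these I must deduce that $\mathcal{X}$ is the full product over the fine partition $\{A_1,\dots,A_n\}$. I would argue by a coordinate-by-coordinate swapping argument. Fix a target $\biggambplus_{i=1}^n A_i X_i$ with each $X_i\in\mathcal{X}_i$, and prove by induction on $m\in\{0,\dots,n\}$ that there exists $Y\in\mathcal{X}$ with $A_j Y=A_j X_j$ for all $j\le m$; the base case $m=0$ is just non-emptiness of $\mathcal{X}$. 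For the step, given such a $Y$, I apply the hypothesis with $k=m+1$: from $Y\in\mathcal{X}=A_{m+1}\mathcal{X}_{m+1}\gambplus\compl{A}_{m+1}\mathcal{Z}_{m+1}$ I extract $Z\in\mathcal{Z}_{m+1}$ with $\compl{A}_{m+1}Y=\compl{A}_{m+1}Z$, and then, since $X_{m+1}\in\mathcal{X}_{m+1}$, the gamble $Y'=A_{m+1}X_{m+1}\gambplus\compl{A}_{m+1}Z$ again lies in $\mathcal{X}$ by that same factorization.

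The crucial observation that makes the induction close is that the swap leaves the already-fixed cells untouched: for $j\le m$ the cells are disjoint, so $A_j\subseteq\compl{A}_{m+1}$, whence $A_j Y'=A_j Z=A_j Y=A_j X_j$, while $A_{m+1}Y'=A_{m+1}X_{m+1}$; thus $Y'$ matches the target on $A_1,\dots,A_{m+1}$. Taking $m=n$ produces $Y\in\mathcal{X}$ with $A_i Y=A_i X_i$ for every $i$, i.e.\ $Y=\biggambplus_{i=1}^n A_i X_i\in\mathcal{X}$. The step I expect to require the most care is exactly this preservation of the fixed coordinates under the swap---tracking which cells sit inside $\compl{A}_{m+1}$ and checking the extended mixture notation is applied correctly---whereas the rest is routine once the swapping scheme is set up.
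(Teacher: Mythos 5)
Your proof is correct, but it takes a genuinely different route from the paper's. The paper proves Lemma~\ref{lemma:setsums:equality} by induction on the number of partition cells $n$: the base case $n=2$ follows by multiplying both representations by $A_2$ (giving $\compl{A}_1\mathcal{Z}_1=A_2\mathcal{X}_2$), and the induction step merges $A_1$ and $A_2$ into a single cell $A^*_1=A_1\cup A_2$ with factor set $\mathcal{X}^*_1=A_1\mathcal{X}_1\gambplus A_2\mathcal{X}_2$, reducing an $(m+1)$-cell instance to an $m$-cell one; throughout, the manipulations are identities between sets of gambles. You instead fix $n$ and prove the two inclusions element-wise: your easy direction (pick $X_i\in\mathcal{X}_i$ with $A_iX=A_iX_i$ and reassemble over the partition) is essentially the paper's separate Lemma~\ref{lemma:setsums:subseteq}, and your hard direction is an exchange argument, building a gamble of $\mathcal{X}$ that matches the target $\biggambplus_{i=1}^n A_iX_i$ on one cell at a time, using at each step that the hypothesis set $A_{m+1}\mathcal{X}_{m+1}\gambplus\compl{A}_{m+1}\mathcal{Z}_{m+1}$ is a \emph{full} product, so the recombination $A_{m+1}X_{m+1}\gambplus\compl{A}_{m+1}Z$ stays in $\mathcal{X}$, while disjointness ($A_j\subseteq\compl{A}_{m+1}$ for $j\le m$) preserves the already-matched cells. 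Both arguments are sound and of comparable length; yours has the advantage of working only with individual gambles, which makes the rectangularity mechanism explicit and avoids any worry about whether set-level substitutions preserve the product structure (a point the paper's proof uses silently when it substitutes $\compl{A}_1\mathcal{Z}_1=A_2\mathcal{X}_2\gambplus(\compl{A}_1\cap\compl{A}_2)\mathcal{Z}_2$ into a product), whereas the paper's cell-merging induction is more compact and reuses its $n=2$ case as the sole engine.
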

\begin{proof}
  Let us prove the implication by induction.

  The implication holds for $n=2$. Indeed, suppose that
  \begin{equation*}
    \mathcal{X}=A_1\mathcal{X}_1\gambplus\compl{A}_1\mathcal{Z}_1=A_2\mathcal{X}_2\gambplus\compl{A}_2\mathcal{Z}_2.
  \end{equation*}
  By multiplying all elements of these sets with $A_2$, and noting that $\compl{A}_1=A_2$ and $\compl{A}_2=A_1$, we find that
  \begin{equation*}
    \compl{A}_1\mathcal{Z}_1=A_2\mathcal{X}_2
  \end{equation*}
  which establishes the base step.

  For the induction step, assume that the implication holds for $n=m$. We prove that the implication also holds for $n=m+1$. Suppose that
  \begin{align*}
     \mathcal{X}&=A_1\mathcal{X}_1\gambplus\compl{A}_1\mathcal{Z}_1 \\
     \mathcal{X}&=A_2\mathcal{X}_2\gambplus\compl{A}_2\mathcal{Z}_2 \\
     &\quad\vdots \\
     \mathcal{X}&=A_{m+1}\mathcal{X}_{m+1}\gambplus\compl{A}_{m+1}\mathcal{Z}_{m+1}
  \end{align*}
  First, note that the equality $A_1\mathcal{X}_1\gambplus\compl{A}_1\mathcal{Z}_1=A_2\mathcal{X}_2\gambplus\compl{A}_2\mathcal{Z}_2$
  implies in particular that (by multiplying all elements of these sets with $\compl{A}_1$)
  \begin{equation*}
    \compl{A}_1\mathcal{Z}_1=A_2\mathcal{X}_2\gambplus(\compl{A}_1\cap\compl{A}_2)\mathcal{Z}_2
  \end{equation*}
  Hence,
  \begin{align*}
    \mathcal{X}
    &=A_1\mathcal{X}_1\gambplus\compl{A}_1\mathcal{Z}_1
    =A_1\mathcal{X}_1\gambplus\compl{A}_1(A_2\mathcal{X}_2\gambplus(\compl{A}_1\cap\compl{A}_2)\mathcal{Z}_2) \\
    &=A_1\mathcal{X}_1\gambplus A_2\mathcal{X}_2\gambplus(\compl{A}_1\cap\compl{A}_2)\mathcal{Z}_2 \\
    &=(A_1\cup A_2)(A_1\mathcal{X}_1\gambplus A_2\mathcal{X}_2)\gambplus(\compl{A}_1\cap\compl{A}_2)\mathcal{Z}_2
  \end{align*}
  With $A^*_1=A_1\cup A_2$, $\mathcal{X}^*_1=A_1\mathcal{X}_1\gambplus A_2\mathcal{X}_2$, and $\mathcal{Z}^*_1=\mathcal{Z}_2$, we now have that
  \begin{align*}
     \mathcal{X}&=A^*_1\mathcal{X}^*_1\gambplus\compl{A}^*_1\mathcal{Z}^*_1 \\
     \mathcal{X}&=A_3\mathcal{X}_3\gambplus\compl{A}_3\mathcal{Z}_3 \\
     &\quad\vdots \\
     \mathcal{X}&=A_{m+1}\mathcal{X}_{m+1}\gambplus\compl{A}_{m+1}\mathcal{Z}_{m+1}
  \end{align*}
  By assumption, the implication to be proven already holds for $n=m$, so, from the above equalities, it follows that
  \begin{align*}
    \mathcal{X}&=A^*_1\mathcal{X}^*_1\gambplus A_3\mathcal{X}_3\gambplus\dots\gambplus A_{m+1}\mathcal{X}_{m+1}
    \\
    \intertext{and, by construction of $A^*_1$ and $\mathcal{X}^*_1$,}
    &=\biggambplus_{i=1}^{m+1}A_i\mathcal{X}_i
  \end{align*}
  which proves the induction step.
\end{proof}

\subsection{Proof of Lemma~\ref{lemma:I:Istar:equality}}

\begin{proof}
  By Eq.~\eqref{eq:normgambles:tree:is:normgambles:nfd:tree}, Eq.~\eqref{eq:lemma:I:Istar:equality} holds if and only if
  \begin{equation*}
    \normgambles(\nfd(\tree_i)) \cap \opt(\normgambles(\tree)|\treeevent{T})\neq\emptyset,
  \end{equation*}
or equivalently, if and only if there is a normal form decision $U\in\nfd(\tree_i)$ such that $\normgambles(U)\subseteq\opt(\normgambles(\tree)|\treeevent{T})$ (remember that $\normgambles(U)$ is a singleton).

  But, by definition of the $\normgambles$ operator, it also holds that $\normgambles(U)=\normgambles(\decnodeunion U)$. Hence, Eq.~\eqref{eq:lemma:I:Istar:equality} holds if and only if there is a normal form decision $U\in\nfd(\tree_i)$ such that $\normgambles(\decnodeunion U)\subseteq\opt(\normgambles(\tree)|\treeevent{T})$, or equivalently, if and only if there is a normal form decision $V\in\nfd(\tree)$ which contains the node $N_i$, such that $\normgambles(V)\subseteq\opt(\normgambles(\tree)|\treeevent{T})$.

By definition of $\normoper_{\opt}$, this is equivalent to stating that Eq.~\eqref{eq:lemma:I:Istar:equality} holds if and only if $N_i$ is in at least one element of $\normoper_{\opt}(\tree)$.
\end{proof}

\subsection{Proof of Lemma~\ref{lemma:gamble:normopt:equivalence:chance:nodes}}

\begin{proof}
  Assume that Eq.~\eqref{lemma:gamble:normopt:equivalence:chance:nodes:helper} holds.

  First, consider a normal form decision $U\in\bigchancenodemixture_{i=1}^nE_i\normoper_{\opt}(\tree_i)$. Obviously,
  \begin{align*}
    \normgambles(U) &\subseteq \normgambles\Bigg(\bigchancenodemixture_{i=1}^n E_i \normoper_{\opt}(\tree_i)\Bigg)  \\
    \intertext{so, by the definition of $\normgambles$, Eq.~\eqref{eq:normgambles:chancenodes:for:sets:of:trees} in particular,}
    &= \biggambplus_{i=1}^n E_i \normgambles(\normoper_{\opt}(\tree_i)) \\
    \intertext{
      and hence, by Eq.~\eqref{lemma:gamble:normopt:equivalence:chance:nodes:helper},}
    &= \normgambles(\normoper_{\opt}(\tree)).
  \end{align*}
  So, there exists a normal form decision $V\in\normoper_{\opt}(\tree)$ such that $\normgambles(V)=\normgambles(U)$. Since $U\in\nfd(\tree)$, by definition of $\normoper_{\opt}$ we have $U\in\normoper_{\opt}(\tree)$. So we have shown that
  \begin{equation*}
    \normoper_{\opt}(\tree)\supseteq\bigchancenodemixture_{i=1}^nE_i\normoper_{\opt}(\tree_i).
  \end{equation*}

  Next, consider a normal form decision $U\in\normoper_{\opt}(\tree)$. We know by Eq.~\eqref{lemma:gamble:normopt:equivalence:chance:nodes:helper} that
  \begin{equation*}
    \normgambles(U) \subseteq \biggambplus_{i=1}^n E_i \normgambles(\normoper_{\opt}(\tree_i)).
  \end{equation*}
  We can write $U=\bigchancenodemixture_{i=1}^n E_iU_i$, where $U_i\in\nfd(\tree_i)$, so
  \begin{equation*}
    \biggambplus_{i=1}^n E_i \normgambles(U_i) \subseteq \biggambplus_{i=1}^n E_i \normgambles(\normoper_{\opt}(\tree_i)).
  \end{equation*}

  Consider any $k$ and any normal form decision $V_k\in\normoper_{\opt}(\tree_k)$. The above equation, and Eq.~\eqref{eq:gambofnormoptisopt}, tell us that we can choose each $V_k$ such that $E_k\normgambles(V_k)=E_k\normgambles(U_k)$. Of course, because $V_k\in\normoper_{\opt}(\tree_k)$,
  \begin{equation*}
    \normgambles(V_k)\subseteq\opt(\normgambles(\tree_k)|\treeevent{\tree_k}).
  \end{equation*}
  We wish to establish that also $\normgambles(U_k)\subseteq\opt(\normgambles(\tree_k)|\treeevent{\tree_k})$.

  Indeed, this follows from Property~\ref{prop:conditioning:property}: observe that both singletons $\normgambles(U_k)$ and $\normgambles(V_k)$ are subsets of $\normgambles(\tree_k)$, $E_k\normgambles(U_k)=E_k\normgambles(V_k)$, and $\normgambles(V_k)\subseteq\opt(\normgambles(\tree_k)|\treeevent{\tree_k})$. Consistency of $\tree$ confirms that $\normgambles(\tree_k)$ is $\treeevent{\tree_k}$-consistent. Hence, Property~\ref{prop:conditioning:property} applies, and
  \begin{equation*}
    \normgambles(U_k)\subseteq\opt(\normgambles(\tree_k)|\treeevent{\tree_k}).
  \end{equation*}

  Therefore, $U_k\in\normoper_{\opt}(\tree_k)$ by definition of $\normoper_{\opt}(\tree_i)$. Since this holds for any $k$, we conclude that $U\in\bigchancenodemixture_{i=1}^nE_i\normoper_{\opt}(\tree_i)$. So, we have shown that also
  \begin{equation*}
    \normoper_{\opt}(\tree)\subseteq\bigchancenodemixture_{i=1}^nE_i\normoper_{\opt}(\tree_i).
  \end{equation*}
\end{proof}

\subsection{Proof of Lemma~\ref{lemma:gamble:normopt:equivalence:decision:nodes}}

\begin{proof}
  Assume that Eq.~\eqref{lemma:gamble:normopt:equivalence:decision:nodes:helper} holds.

  Consider any normal form decision $V\in\nfd\left(\bigdecnodeunion_{i\in\mathcal{I}} \normoper_{\opt}(\tree_i)\right)$. By definition of $\normgambles$,
  \begin{align*}
    \normgambles(V)&\subseteq \bigcup_{i\in\mathcal{I}}\normgambles(\normoper_{\opt}(\tree_i))
    \\
    \intertext{and, by Eq.~\eqref{lemma:gamble:normopt:equivalence:decision:nodes:helper},}
    &=\normgambles(\normoper_{\opt}(\tree)).
  \end{align*}
  Hence, by definition of $\normoper_{\opt}$, and the obvious fact that $V\in\nfd(T)$, it follows that $V\in\normoper_{\opt}(T)$.
  So we have shown that
  \begin{equation*}
    \normoper_{\opt}(\tree) \supseteq \nfd\Bigg(\bigdecnodeunion_{i\in\mathcal{I}}\normoper_{\opt}(\tree_i)\Bigg).
  \end{equation*}
  
  Conversely, let $V \in\normoper_{\opt}(\tree)$. Then, again by Eq.~\eqref{lemma:gamble:normopt:equivalence:decision:nodes:helper},
  \begin{equation*}
    \normgambles(V) \subseteq \normgambles(\normoper_{\opt}(\tree)) = \bigcup_{i\in\mathcal{I}}\normgambles(\normoper_{\opt}(\tree_i)).
  \end{equation*}
  Now $V=\decnodeunion U$ where $U\in\nfd(\tree_i)$ for some $i\in\mathcal{I}$. We want to show that $U\in\normoper_{\opt}(\tree_i)$.

  Indeed, let $X$ be the gamble corresponding to $V$, and also $U$,
  \begin{equation*}
    \normgambles(V)=\normgambles(U)=\{X\}.
  \end{equation*}
  Because $V \in\normoper_{\opt}(\tree)$, we know that $X\in\opt(\normgambles(\tree)|\treeevent{\tree})$. It is established that $U\in\normoper_{\opt}(\tree_i)$ if we can show that $X\in\opt(\normgambles(\tree_i)|\treeevent{\tree_i})$. But this follows at once from Property~\ref{prop:intersection:property}, because $i\in\mathcal{I}$ (and the definition of $\mathcal{I}$), $\normgambles(\tree_i)\subseteq\normgambles(\tree)$, $\treeevent{\tree}=\treeevent{\tree_i}$, and all sets of gambles are consistent with respect to the relevant events:
  \begin{equation*}
    \opt(\normgambles(\tree_i)|\treeevent{\tree_i})=\opt(\normgambles(\tree)|\treeevent{\tree})\cap\normgambles(\tree_i).
  \end{equation*}
  Concluding, also
  \begin{equation*}
    \normoper_{\opt}(\tree)\subseteq\nfd\Bigg(\bigdecnodeunion_{i\in\mathcal{I}}\normoper_{\opt}(\tree_i)\Bigg).
  \end{equation*}
\end{proof}

\subsection{Proof of Lemma~\ref{lemma:conditioning:intersection:mixture:are:necessary:for:factuality}}

\begin{proof}
  Assume that $\normoper_{\opt}$ is subtree perfect.

  We first establish Property~\ref{prop:conditioning:property}. Let $A$ be a non-empty event, and let $\mathcal{X}$ be a non-empty finite set of $A$-consistent gambles such that $\{X,Y\}\subseteq \mathcal{X}$ with $AX=AY$ and $X\in\opt(\mathcal{X}|A)$. We show that $Y\in\opt(\mathcal{X}|A)$. If $A=\Omega$ the result is trivial, so assume $A\subset\Omega$.
  
  Consider a consistent decision tree $\tree=A\tree_1 \chancenodemixture \compl{A}\tree_2$, where $\treeevent{\tree}=\Omega$, $\normgambles(\tree_1)=\mathcal{X}$, and $\tree_2$ is a normal form decision with $\normgambles(\tree_2)=\{Z\}$, an $\compl{A}$-consistent gamble (see the left tree in Fig.~\ref{fig:conditioning:intersection:mixture:are:necessary:for:factuality}). We know by consistency of the gambles that there is such a $\tree$ (see Definition~\ref{def:gambles:consistent}).
  
  Consider $U\in\nfd(\tree_1)$ with $\normgambles(U)=\{X\}$ and $V\in\nfd(\tree_1)$ with $\normgambles(V)=\{Y\}$. By definition of $\normoper_{\opt}$, we have $U\in\normoper_{\opt}(\tree_i)$. Therefore by subtree perfectness, $AU\chancenodemixture\compl{A}\tree_2\in\normoper_{\opt}(\tree)$ and of course $AV\chancenodemixture\compl{A}\tree_2\in\normoper_{\opt}(\tree)$. Again by subtree perfectness, $V\in\normoper_{\opt}(\tree_i)$, whence $Y\in\opt(\mathcal{X}|A)$.

  Next, we establish Property~\ref{prop:intersection:property}. Let $A$ be a non-empty event, and let $\mathcal{Y}\subseteq\mathcal{X}$ be non-empty finite $A$-consistent sets of gambles such that $\opt(\mathcal{X}|A)\cap\mathcal{Y}\ne \emptyset$. We show that $\opt(\mathcal{Y}|A) = \opt(\mathcal{X}|A)\cap\mathcal{Y}$.
  
  Let $\tree=\tree_1\decnodeunion\tree_2$ be a consistent decision tree with $\treeevent{\tree}=A$, $\normgambles(\tree_1)=\mathcal{Y}$ and $\normgambles(\tree_2)=\mathcal{X}$ (see the right tree in Fig.~\ref{fig:conditioning:intersection:mixture:are:necessary:for:factuality}). We know by consistency of the gambles that there is such a $\tree$.
  
  Let $\decnode$ be the decision node at the root of $\tree_1$. By subtree perfectness, we have
  \begin{equation*}
    \normgambles(\subtreeat{\normoper_{\opt}(\tree)}{N})=\normgambles(\normoper_{\opt}(\subtreeat{\tree}{N})).
  \end{equation*}
  The right-hand side is equal to $\opt(\mathcal{Y}|A)$. Also,
  \begin{align*}
    \normgambles(\subtreeat{\normoper_{\opt}(\tree)}{N})&=\normgambles(\normoper_{\opt}(\tree))\cap\normgambles(\subtreeat{\tree}{N}) \\
    &= \opt(\mathcal{X}|A)\cap\mathcal{Y}
  \end{align*}
  as required.

  Finally, we establish Property~\ref{prop:mixture:property}. Let $A$ and $B$ be events such that $A\cap B \ne \emptyset$ and $\compl{A}\cap B\ne \emptyset$, let $\mathcal{X}$ be a non-empty finite $A\cap B$-consistent set of gambles, and let $Z$ be a $\compl{A}\cap B$-consistent gamble.
  
  Let $\tree=A\tree_1\chancenodemixture\compl{A}\tree_2$ be a consistent decision tree such that $\treeevent{\tree}=B$, $\normgambles(\tree_1)=\mathcal{X}$, and $\normgambles(\tree_2)=\{Z\}$ (see the left tree in Fig.~\ref{fig:conditioning:intersection:mixture:are:necessary:for:factuality}).
  
  By subtree perfectness, we have (letting $N$ be the root node of $\tree_1$)
  \begin{equation*}
    \normgambles(\subtreeat{\normoper_{\opt}(\tree)}{N})=\normgambles(\normoper_{\opt}(\subtreeat{\tree}{N})).
  \end{equation*}
  Here, the right-hand side is $\opt(\mathcal{X}|A \cap B)$, and
  \begin{equation*}
    \normgambles(\subtreeat{\normoper_{\opt}(\tree)}{N})=\{X\in\mathcal{X}\colon AX\gambplus\compl{A}Z\in\opt(A\mathcal{X}\gambplus\compl{A}Z|B),
  \end{equation*}
  whence Property~\ref{prop:mixture:property} follows.
\end{proof}

\subsection{Proof of Theorem~\ref{thm:backopt:normopt:equivalence}}

Before we prove Theorem~\ref{thm:backopt:normopt:equivalence}, we prove a few lemmas, provide two counterexamples, and introduce a new property called path independence.

\begin{lemma}[Sen \protect{\cite[Proposition~17]{1977:sen}}]
  \label{lemma:opt:of:unions:subseteq}
  A choice function $\opt$ satisfies Property~\ref{prop:preservation:under:addition:of:elements} if and only if, for any non-empty event $A$ and any finite family of non-empty finite $A$-consistent sets of gambles $\mathcal{X}_1$, \dots $\mathcal{X}_n$,
  \begin{equation*}
    \opt\Bigg(\bigcup_{i=1}^n\mathcal{X}_i\Bigg|A\Bigg) \subseteq \opt\Bigg(\bigcup_{i=1}^n\opt(\mathcal{X}_i|A)\Bigg| A \Bigg)\subseteq \bigcup_{i=1}^n\opt(\mathcal{X}_i|A).
  \end{equation*}
\end{lemma}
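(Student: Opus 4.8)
The plan is to prove the two directions of the equivalence separately, writing $\mathcal{X}=\bigcup_{i=1}^n\mathcal{X}_i$ and $\mathcal{Z}=\bigcup_{i=1}^n\opt(\mathcal{X}_i|A)$ throughout, so that the asserted chain reads $\opt(\mathcal{X}|A)\subseteq\opt(\mathcal{Z}|A)\subseteq\mathcal{Z}$. First I would note that $\mathcal{Z}\subseteq\mathcal{X}$, and that $A$-consistency propagates to every set in sight: by the pointwise characterization of Definition~\ref{def:gambles:consistent}\ref{def:gambles:consistent:via:inverse:map}, any subset of an $A$-consistent set is $A$-consistent and a finite union of $A$-consistent sets is $A$-consistent, so $\mathcal{X}$, $\mathcal{Z}$, each $\mathcal{X}_i$, and each $\opt(\mathcal{X}_i|A)$ are all $A$-consistent. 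This lets me apply Property~\ref{prop:preservation:under:addition:of:elements} freely to any pair of the relevant sets.

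For the ``only if'' direction I would assume Property~\ref{prop:preservation:under:addition:of:elements} and dispatch the two inclusions in turn. The second inclusion $\opt(\mathcal{Z}|A)\subseteq\mathcal{Z}$ is immediate from the definition of a choice function (Definition~\ref{def:choice:function}). For the first inclusion, I take an arbitrary $X\in\opt(\mathcal{X}|A)$; since $X\in\mathcal{X}=\bigcup_i\mathcal{X}_i$, it lies in some $\mathcal{X}_k$. Applying Property~\ref{prop:preservation:under:addition:of:elements} with the pair $\mathcal{X}_k\subseteq\mathcal{X}$ gives $\opt(\mathcal{X}_k|A)\supseteq\opt(\mathcal{X}|A)\cap\mathcal{X}_k\ni X$, so that $X\in\opt(\mathcal{X}_k|A)\subseteq\mathcal{Z}$; a second application, now with the pair $\mathcal{Z}\subseteq\mathcal{X}$, yields $\opt(\mathcal{Z}|A)\supseteq\opt(\mathcal{X}|A)\cap\mathcal{Z}\ni X$. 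Hence $X\in\opt(\mathcal{Z}|A)$, which is the first inclusion.

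For the ``if'' direction I would assume the double inclusion and recover Property~\ref{prop:preservation:under:addition:of:elements}. Given $A$-consistent $\mathcal{Y}\subseteq\mathcal{X}$ and some $X\in\opt(\mathcal{X}|A)\cap\mathcal{Y}$, the key idea is to choose the decomposition $\mathcal{X}_1=\mathcal{Y}$ together with one singleton $\mathcal{X}_i=\{Z\}$ for each $Z\in\mathcal{X}\setminus\mathcal{Y}$. Then $\bigcup_i\mathcal{X}_i=\mathcal{X}$, and since $\opt(\{Z\}|A)=\{Z\}$, the middle set collapses to $\mathcal{Z}=\opt(\mathcal{Y}|A)\cup(\mathcal{X}\setminus\mathcal{Y})$. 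The composed inclusion $\opt(\mathcal{X}|A)\subseteq\mathcal{Z}$ then forces $X\in\opt(\mathcal{Y}|A)\cup(\mathcal{X}\setminus\mathcal{Y})$; but $X\in\mathcal{Y}$ excludes membership in $\mathcal{X}\setminus\mathcal{Y}$, so $X\in\opt(\mathcal{Y}|A)$, establishing $\opt(\mathcal{X}|A)\cap\mathcal{Y}\subseteq\opt(\mathcal{Y}|A)$, which is exactly Property~\ref{prop:preservation:under:addition:of:elements}.

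The argument is essentially routine and I expect no genuine obstacle. The only points requiring a little care are that in the first inclusion of the ``only if'' direction one must invoke Property~\ref{prop:preservation:under:addition:of:elements} twice---once to land $X$ inside $\mathcal{Z}$, and once to certify it as optimal there---and that in the ``if'' direction one must hit upon the singleton decomposition, after which the otherwise trivial second inclusion does all the work. Everything else reduces to the bookkeeping of $A$-consistency recorded at the outset.
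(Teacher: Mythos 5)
Your proof is correct, and it is essentially the argument the paper intends: the paper gives no proof of its own but defers to Sen's Proposition~17, whose standard proof is exactly your double application of Property~\ref{prop:preservation:under:addition:of:elements} for the forward direction and the singleton decomposition $\mathcal{X}=\mathcal{Y}\cup\bigcup_{Z\in\mathcal{X}\setminus\mathcal{Y}}\{Z\}$ for the converse. Your opening observation that $A$-consistency passes to subsets and finite unions via Definition~\ref{def:gambles:consistent}\ref{def:gambles:consistent:via:inverse:map} is precisely the bookkeeping the authors allude to when they remark that Sen's proofs carry over despite the added conditioning and consistency apparatus.
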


\begin{property}\label{prop:path:independence}
 A choice function $\opt$ is said to be \emph{path independent} if, for any non-empty event $A$, and for any finite family of non-empty finite $A$-consistent sets of gambles $\mathcal{X}_1$, \dots, $\mathcal{X}_n$,
\begin{equation*}
 \opt\Bigg(\bigcup_{i=1}^n\mathcal{X}_i\Bigg|A\Bigg) = \opt\Bigg(\bigcup_{i=1}^n\opt(\mathcal{X}_i|A) \Bigg| A\Bigg).
\end{equation*}
\end{property}

Path independence appears frequently in the social choice literature. Plott \cite{1973:plott} gives a detailed investigation of path independence and its possible justifications. Path independence is also equivalent to Axiom $7'$ of Luce and Raiffa \cite[p.~289]{1957:luce}.

\begin{lemma}[Sen \protect{\cite[Proposition~19]{1977:sen}}]
  \label{lemma:opt:of:unions:equality}
  A choice function $\opt$ satisfies Properties~\ref{prop:insensitivity:to:non:optimal:elements} and~\ref{prop:preservation:under:addition:of:elements} if and only if $\opt$ satisfies Property~\ref{prop:path:independence}.
\end{lemma}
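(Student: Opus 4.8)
The plan is to prove both implications using Lemma~\ref{lemma:opt:of:unions:subseteq} together with one easy consequence of path independence: applying Property~\ref{prop:path:independence} to a single-set family ($n=1$) gives idempotence, $\opt(\opt(\mathcal{X}|A)|A)=\opt(\mathcal{X}|A)$. Throughout I write $\mathcal{X}=\bigcup_{i=1}^n\mathcal{X}_i$ and $\mathcal{Y}=\bigcup_{i=1}^n\opt(\mathcal{X}_i|A)$, and note that these unions, and all images under $\opt$, remain $A$-consistent (since $A$-consistency is an elementwise condition, preserved by taking subsets and unions), so every application of the properties and of Lemma~\ref{lemma:opt:of:unions:subseteq} is legitimate.

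For the ``only if'' direction, assume Properties~\ref{prop:insensitivity:to:non:optimal:elements} and~\ref{prop:preservation:under:addition:of:elements}. Since Property~\ref{prop:preservation:under:addition:of:elements} holds, Lemma~\ref{lemma:opt:of:unions:subseteq} gives $\opt(\mathcal{X}|A)\subseteq\opt(\mathcal{Y}|A)\subseteq\mathcal{Y}$, so in particular $\opt(\mathcal{X}|A)\subseteq\mathcal{Y}\subseteq\mathcal{X}$. Property~\ref{prop:insensitivity:to:non:optimal:elements} then yields $\opt(\mathcal{Y}|A)=\opt(\mathcal{X}|A)$, which is precisely the equality asserted by Property~\ref{prop:path:independence}.

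For the ``if'' direction, assume Property~\ref{prop:path:independence}. To recover Property~\ref{prop:preservation:under:addition:of:elements}, observe that path independence gives $\opt(\mathcal{X}|A)=\opt(\mathcal{Y}|A)$, while $\opt(\mathcal{Y}|A)\subseteq\mathcal{Y}$ trivially; the resulting chain $\opt(\mathcal{X}|A)\subseteq\opt(\mathcal{Y}|A)\subseteq\mathcal{Y}$ is exactly the condition of Lemma~\ref{lemma:opt:of:unions:subseteq}, so Property~\ref{prop:preservation:under:addition:of:elements} follows. To recover Property~\ref{prop:insensitivity:to:non:optimal:elements}, suppose $\opt(\mathcal{X}|A)\subseteq\mathcal{Y}\subseteq\mathcal{X}$ for a single pair $\mathcal{Y}\subseteq\mathcal{X}$. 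The just-established Property~\ref{prop:preservation:under:addition:of:elements}, applied to $\mathcal{Y}\subseteq\mathcal{X}$ together with $\opt(\mathcal{X}|A)\subseteq\mathcal{Y}$, gives $\opt(\mathcal{X}|A)\subseteq\opt(\mathcal{Y}|A)$. Now apply Property~\ref{prop:path:independence} to the two-set family $\{\mathcal{X},\mathcal{Y}\}$: since $\mathcal{Y}\subseteq\mathcal{X}$ we have $\mathcal{X}\cup\mathcal{Y}=\mathcal{X}$, so $\opt(\mathcal{X}|A)=\opt(\opt(\mathcal{X}|A)\cup\opt(\mathcal{Y}|A)|A)$. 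Because $\opt(\mathcal{X}|A)\subseteq\opt(\mathcal{Y}|A)$, the inner union collapses to $\opt(\mathcal{Y}|A)$, and idempotence collapses the right-hand side to $\opt(\mathcal{Y}|A)$, giving $\opt(\mathcal{X}|A)=\opt(\mathcal{Y}|A)$ as required.

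The routine parts are the set manipulations; the one place to take care is the ``if'' direction for Property~\ref{prop:insensitivity:to:non:optimal:elements}, where the \emph{equality} (rather than a mere inclusion) is obtained only by combining the two-set instance of path independence with idempotence and the inclusion supplied by Property~\ref{prop:preservation:under:addition:of:elements}. I expect no real obstacle beyond bookkeeping the $A$-consistency of every set fed into $\opt$ and into Lemma~\ref{lemma:opt:of:unions:subseteq}.
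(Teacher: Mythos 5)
Your proof is correct, but it does not follow the paper's route, because the paper supplies no argument of its own here: it defers entirely to Sen's Proposition~19 \cite{1977:sen}, remarking only that the proof carries over unchanged to the conditional, consistency-aware setting, and it invokes Plott \cite{1973:plott} to pass from Sen's pairwise path independence to the $n$-ary Property~\ref{prop:path:independence}. Your argument is instead self-contained within the paper's machinery. The ``only if'' direction is the standard one: writing $\mathcal{Y}=\bigcup_{i=1}^n\opt(\mathcal{X}_i|A)$, Lemma~\ref{lemma:opt:of:unions:subseteq} (which needs only Property~\ref{prop:preservation:under:addition:of:elements}) yields $\opt(\mathcal{X}|A)\subseteq\mathcal{Y}\subseteq\mathcal{X}$, and Property~\ref{prop:insensitivity:to:non:optimal:elements} upgrades this to the required equality. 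The ``if'' direction contains your two genuinely useful moves, both of which check out: first, since Lemma~\ref{lemma:opt:of:unions:subseteq} is stated as an equivalence and path independence trivially implies its double inclusion (the second inclusion being just $\opt(\mathcal{Z}|A)\subseteq\mathcal{Z}$ from Definition~\ref{def:choice:function}), Property~\ref{prop:preservation:under:addition:of:elements} drops out with no direct argument; second, Property~\ref{prop:insensitivity:to:non:optimal:elements} follows from the two-set instance of Property~\ref{prop:path:independence} together with idempotence (the $n=1$ instance) and the inclusion $\opt(\mathcal{X}|A)\subseteq\opt(\mathcal{Y}|A)$ already obtained from preservation, which is exactly what lets the union $\opt(\mathcal{X}|A)\cup\opt(\mathcal{Y}|A)$ collapse and the equality emerge. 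Your consistency bookkeeping is also sound: by Definition~\ref{def:gambles:consistent}\ref{def:gambles:consistent:via:inverse:map}, $A$-consistency is an elementwise condition, hence stable under subsets and unions, so every set you feed into $\opt$ and into Lemma~\ref{lemma:opt:of:unions:subseteq} is legitimate. As for what each approach buys: the paper's citation keeps the appendix lean but asks the reader to adapt Sen's unconditional framework and to supply Plott's pairwise-to-$n$-ary equivalence; your version is verifiable in place, uses path independence only for $n\in\{1,2\}$ in the backward direction while proving the full $n$-ary statement in the forward direction, and thereby absorbs the Plott step that the paper handles by reference.
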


Properties~\ref{prop:insensitivity:to:non:optimal:elements}, \ref{prop:preservation:under:addition:of:elements}, and~\ref{prop:path:independence} are expressed slightly differently here than in Sen~\cite{1977:sen}, who does not use the concepts of conditioning and consistency. Despite this, the proofs of Lemmas~\ref{lemma:opt:of:unions:subseteq} and~\ref{lemma:opt:of:unions:equality} proceed identically to the corresponding propositions by Sen. Also, Sen defines path independence only for pairs of subsets, but Plott~\cite[Theorem~1, p.~1082]{1973:plott} shows that this type of path independence is indeed equivalent to Property~\ref{prop:path:independence}.

\begin{lemma}\label{lemma:setsums:subseteq}
  Let $A_1$, \dots, $A_n$ be a finite partition of $\pspace$, $n\ge 2$. Let $\mathcal{X}$, and $\mathcal{X}_1$, $\mathcal{Z}_1$, \dots, $\mathcal{X}_n$, $\mathcal{Z}_n$ be finite sets of gambles. If
  \begin{equation*}
    \mathcal{X}\subseteq A_k\mathcal{X}_k\gambplus\compl{A}_k \mathcal{Z}_k
  \end{equation*}
  for all $k\in\{1,\dots,n\}$, then
  \begin{equation*}
    \mathcal{X}\subseteq \biggambplus_{k=1}^nA_k\mathcal{X}_k.
  \end{equation*}
\end{lemma}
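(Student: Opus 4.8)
The plan is to prove this single inclusion directly at the level of individual gambles, rather than mirroring the set-sum manipulation used for Lemma~\ref{lemma:setsums:equality}. Fix an arbitrary $X\in\mathcal{X}$; since $X$ is arbitrary, it suffices to show $X\in\biggambplus_{k=1}^n A_k\mathcal{X}_k$. The whole argument is then just a matter of reconstructing $X$ from its restrictions to the blocks $A_1,\dots,A_n$.

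For each $k\in\{1,\dots,n\}$, the hypothesis $\mathcal{X}\subseteq A_k\mathcal{X}_k\gambplus\compl{A}_k\mathcal{Z}_k$ supplies some $X_k\in\mathcal{X}_k$ and $Z_k\in\mathcal{Z}_k$ with $X = A_k X_k\gambplus\compl{A}_k Z_k$. The only consequence I would retain is that $X$ and $X_k$ agree on $A_k$, that is, $A_k X = A_k X_k$; the behaviour of $Z_k$ on $\compl{A}_k$ plays no role in this direction. This is precisely what makes the inclusion version much easier than the equality version: for equality one must also establish the reverse containment, and controlling which elements of $\biggambplus_{k=1}^n A_k\mathcal{X}_k$ actually arise forces the inductive bookkeeping of Lemma~\ref{lemma:setsums:equality}. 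Here no such control is needed.

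With these $X_k$ chosen, I would assemble the gamble $\biggambplus_{k=1}^n A_k X_k$. Since $A_1,\dots,A_n$ partition $\pspace$, each $\omega\in\pspace$ lies in a unique block $A_j$, where $\bigl(\biggambplus_{k=1}^n A_k X_k\bigr)(\omega)=X_j(\omega)=X(\omega)$ by the agreement just established. Hence $\biggambplus_{k=1}^n A_k X_k = X$, so $X\in\biggambplus_{k=1}^n A_k\mathcal{X}_k$, as required. There is essentially no obstacle; the only thing to watch is not to be distracted by the $\mathcal{Z}_k$, which are irrelevant once attention is restricted to the value of $X$ on each block. If one preferred to stay closer to the proof of Lemma~\ref{lemma:setsums:equality}, the same induction on $n$ would also work, but the direct pointwise assembly is shorter and makes clear that the hypothesis $n\ge 2$ is not actually used for this inclusion.
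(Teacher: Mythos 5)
Your proof is correct and takes essentially the same approach as the paper's: the paper likewise fixes $X\in\mathcal{X}$, uses the hypothesis to extract, for each $k$, some $X_k\in\mathcal{X}_k$ with $A_kX=A_kX_k$, and concludes directly that $X=\biggambplus_{k=1}^n A_kX_k\in\biggambplus_{k=1}^n A_k\mathcal{X}_k$. Your added observations (that the $\mathcal{Z}_k$ are irrelevant to this direction and that $n\ge 2$ is not used) are accurate but inessential.
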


\begin{proof}
  Let $X\in\mathcal{X}$, then $X\in A_k \mathcal{X}_k \gambplus \compl{A}_k \mathcal{Z}_k$ for all $k$. Therefore, for each $k$, there is an $X_k\in\mathcal{X}_k$ such that $A_k X = A_k X_k$. Whence,
  \begin{equation*}
    X=\biggambplus_{k=1}^n A_k X_k \in \biggambplus_{k=1}^n A_k \mathcal{X}_k.
  \end{equation*}
\end{proof}

\begin{lemma}\label{lemma:opt:of:setsums:subseteq}
Let $A_1$, \dots, $A_n$ be a finite partition of $\pspace$. Let $B$ be any event such that $A_i \cap B \neq \emptyset$ for all $A_i$. Let $\mathcal{X}_1$, \dots, $\mathcal{X}_n$ be a finite family of non-empty finite sets of gambles where each $\mathcal{X}_i$ is $A_i \cap B$-consistent. If a choice function $\opt$ satisfies Properties~\ref{prop:preservation:under:addition:of:elements} and~\ref{prop:backward:mixture:property}, then
\begin{equation}\label{eq:opt:of:setsums:subseteq}
  \opt\Bigg(\biggambplus_{i=1}^nA_i\mathcal{X}_i\Bigg|B\Bigg) \subseteq \biggambplus_{i=1}^n A_i \opt(\mathcal{X}_i|A_i \cap B).
\end{equation}
\end{lemma}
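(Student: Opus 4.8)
The plan is to mirror the proof of Lemma~\ref{lemma:opt:of:setsums:strong:equality} almost step for step, weakening each tool to its inclusion-only counterpart. The case $n=1$ is trivial, since then $A_1=\pspace$, so I would assume $n\ge 2$. Write $\mathcal{X}=\biggambplus_{i=1}^n A_i\mathcal{X}_i$. For a fixed $k$, I would reuse the same auxiliary construction as in that earlier lemma: choose a partition $(A'_j)_{j\neq k}$ of $\pspace$ with $\compl{A}_k\cap A'_j=A_j$ for all $j\neq k$, set $\mathcal{Z}_k=\biggambplus_{j\neq k}A'_j\mathcal{X}_j$, and observe that $\mathcal{Z}_k$ is $\compl{A}_k\cap B$-consistent and that $\mathcal{X}=A_k\mathcal{X}_k\gambplus\compl{A}_k\mathcal{Z}_k=\bigcup_{Z_k\in\mathcal{Z}_k}(A_k\mathcal{X}_k\gambplus\compl{A}_k Z_k)$.

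Next I would apply $\opt(\cdot|B)$ to this union. Where the stronger lemma invoked Property~\ref{prop:strong:path:independence} to obtain an exact split, here I would instead use Lemma~\ref{lemma:opt:of:unions:subseteq} (which holds because $\opt$ satisfies Property~\ref{prop:preservation:under:addition:of:elements}) to obtain the inclusion $\opt(\mathcal{X}|B)\subseteq\bigcup_{Z_k\in\mathcal{Z}_k}\opt(A_k\mathcal{X}_k\gambplus\compl{A}_k Z_k|B)$. Each term on the right is then bounded, using Property~\ref{prop:backward:mixture:property} in place of the full Property~\ref{prop:mixture:property}, by $A_k\opt(\mathcal{X}_k|A_k\cap B)\gambplus\compl{A}_k Z_k$; the required side conditions hold because $A_k\cap B\neq\emptyset$ by hypothesis and $\compl{A}_k\cap B\neq\emptyset$ since $n\ge 2$ and every $A_j\cap B\neq\emptyset$. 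Taking the union over $Z_k\in\mathcal{Z}_k$ collapses the right-hand side to give $\opt(\mathcal{X}|B)\subseteq A_k\opt(\mathcal{X}_k|A_k\cap B)\gambplus\compl{A}_k\mathcal{Z}_k$.

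Since this inclusion holds for every $k\in\{1,\dots,n\}$, I would finish by applying Lemma~\ref{lemma:setsums:subseteq} with the roles $\mathcal{X}\mapsto\opt(\mathcal{X}|B)$, $\mathcal{X}_k\mapsto\opt(\mathcal{X}_k|A_k\cap B)$, and $\mathcal{Z}_k\mapsto\mathcal{Z}_k$, which yields precisely $\opt(\mathcal{X}|B)\subseteq\biggambplus_{i=1}^n A_i\opt(\mathcal{X}_i|A_i\cap B)$, that is, Eq.~\eqref{eq:opt:of:setsums:subseteq}.

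I do not anticipate any serious obstacle, as the argument is strictly a one-sided shadow of the earlier equality proof: the two new ingredients, Lemma~\ref{lemma:opt:of:unions:subseteq} and Property~\ref{prop:backward:mixture:property}, are exactly the inclusion-only versions of Property~\ref{prop:strong:path:independence} and Property~\ref{prop:mixture:property} used there, and Lemma~\ref{lemma:setsums:subseteq} replaces Lemma~\ref{lemma:setsums:equality}. The only points demanding care are verifying the side conditions for Property~\ref{prop:backward:mixture:property} and the $\compl{A}_k\cap B$-consistency of each $\mathcal{Z}_k$, so that the backward mixture property legitimately applies; both follow routinely from $n\ge 2$ together with the standing hypothesis that $A_i\cap B\neq\emptyset$ for all $i$.
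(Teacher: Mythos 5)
Your proposal is correct and follows essentially the same route as the paper's own proof: the same auxiliary partition $(A'_j)_{j\neq k}$ and sets $\mathcal{Z}_k$, the same rewriting of $\biggambplus_{i=1}^n A_i\mathcal{X}_i$ as a union over $Z_k\in\mathcal{Z}_k$, the same application of Lemma~\ref{lemma:opt:of:unions:subseteq} followed by Property~\ref{prop:backward:mixture:property}, and the same final appeal to Lemma~\ref{lemma:setsums:subseteq}. Your explicit handling of the $n=1$ case and your verification of the side conditions $A_k\cap B\neq\emptyset$ and $\compl{A}_k\cap B\neq\emptyset$ are slightly more careful than the paper, which leaves these points implicit.
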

\begin{proof}
Let $\mathcal{X}=\biggambplus_{i=1}^nA_i\mathcal{X}_i$. Consider any $k\in\{1,\dots,n\}$ and let $$\mathcal{Z}_k=\biggambplus_{j\neq k} A'_j \mathcal{X}_j$$
where $(A'_j)_{j\neq k}$ forms an arbitrary partition of $\pspace$ such that $\compl{A}_k\cap A'_j=A_j$ for all $j\neq k$. Clearly, $\mathcal{Z}_k$ is $\compl{A}_k\cap B$-consistent because we can trivially find a consistent decision tree $\tree$ with $\treeevent{\tree}=\compl{A}_k\cap B$ and $\normgambles(\tree)=\mathcal{Z}_k$, using the $A_j \cap B$-consistency (and hence, $\compl{A}_k\cap A'_j \cap B$-consistency) of each $\mathcal{X}_j$ for $j\neq k$.

Now, observe that by construction of $\mathcal{Z}_k$,
\begin{equation*}
\mathcal{X}=A_k \mathcal{X}_k \gambplus \compl{A}_k \mathcal{Z}_k
 =\bigcup_{Z_k\in\mathcal{Z}_k} (A_k \mathcal{X}_k \gambplus \compl{A}_k Z_k).
\end{equation*}
Note that $\mathcal{X}$ is $B$-consistent (indeed, because each $\mathcal{X}_i$ is $A_i \cap B$-consistent, we can trivially find a consistent decision tree $\tree$ with $\treeevent{\tree}=B$ and $\normgambles(\tree)=\mathcal{X}$).

If we apply $\opt(\cdot|B)$ on both sides of the above equality, then it follows from Lemma~\ref{lemma:opt:of:unions:subseteq} that
\begin{align*}
\opt(\mathcal{X}|B)&\subseteq \bigcup_{Z_k\in\mathcal{Z}_k}\opt\left(A_k \mathcal{X}_k \gambplus \compl{A}_k Z_k|B\right)
\\
\intertext{and by Property~\ref{prop:backward:mixture:property} (once noted that $\mathcal{X}_k$ is $A_k\cap B$-consistent by assumption, and $Z_k$ is $\compl{A}_k\cap B$-consistent by construction),}
&\subseteq \bigcup_{Z_k\in\mathcal{Z}_k} (A_k \opt(\mathcal{X}_k|A_k \cap B) \gambplus \compl{A}_k Z_k) \\
&=A_k \opt(\mathcal{X}_k | A_k \cap  B) \gambplus \compl{A}_k \mathcal{Z}_k \\
\intertext{whence by Lemma~\ref{lemma:setsums:subseteq},}
\opt(\mathcal{X}|B)&\subseteq \biggambplus_{i=1}^n A_i \opt(\mathcal{X}_i | A_i \cap B).
\end{align*}
\end{proof}

\begin{lemma}\label{lemma:opt:of:setsums:equality}
Let $A_1$, \dots, $A_n$ be a finite partition of $\pspace$. Let $B$ be any event such that $A_i \cap B \neq \emptyset$ for all $A_i$. Let $\mathcal{X}_1$, \dots, $\mathcal{X}_n$ be a finite family of non-empty finite sets of gambles, where each $\mathcal{X}_i$ is $A_i\cap B$-consistent. If a choice function $\opt$ satisfies Properties~\ref{prop:insensitivity:to:non:optimal:elements}, \ref{prop:preservation:under:addition:of:elements}, and~\ref{prop:backward:mixture:property}, then
\begin{equation}\label{eq:opt:of:setsums:equality}
  \opt\Bigg(\biggambplus_{i=1}^nA_i\mathcal{X}_i\Bigg|B\Bigg) = \opt\Bigg(\biggambplus_{i=1}^n A_i \opt(\mathcal{X}_i|A_i \cap B) \Bigg| B\Bigg).
\end{equation}
\end{lemma}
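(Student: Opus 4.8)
The plan is to recognize Eq.~\eqref{eq:opt:of:setsums:equality} as a near-immediate consequence of the preceding Lemma~\ref{lemma:opt:of:setsums:subseteq} combined with Property~\ref{prop:insensitivity:to:non:optimal:elements}. Write $\mathcal{X}=\biggambplus_{i=1}^n A_i\mathcal{X}_i$ and $\mathcal{Y}=\biggambplus_{i=1}^n A_i\opt(\mathcal{X}_i|A_i\cap B)$, so that the goal becomes simply $\opt(\mathcal{X}|B)=\opt(\mathcal{Y}|B)$.

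First I would record two routine facts. Since $\opt(\mathcal{X}_i|A_i\cap B)\subseteq\mathcal{X}_i$ for each $i$, the definition of the $\biggambplus$ operator on sets of gambles immediately gives $\mathcal{Y}\subseteq\mathcal{X}$. Moreover, a subset of an $A$-consistent set is again $A$-consistent, because the criterion in Definition~\ref{def:gambles:consistent}\ref{def:gambles:consistent:via:inverse:map} is checked one gamble at a time; hence each $\opt(\mathcal{X}_i|A_i\cap B)$ is $A_i\cap B$-consistent, and therefore both $\mathcal{X}$ and $\mathcal{Y}$ are $B$-consistent, exactly as argued in the proof of Lemma~\ref{lemma:opt:of:setsums:subseteq}.

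Next comes the key inclusion. Lemma~\ref{lemma:opt:of:setsums:subseteq} gives $\opt(\mathcal{X}|B)\subseteq\biggambplus_{i=1}^n A_i\opt(\mathcal{X}_i|A_i\cap B)=\mathcal{Y}$, and its hypotheses (Properties~\ref{prop:preservation:under:addition:of:elements} and~\ref{prop:backward:mixture:property}) are among those assumed here. Combining with the previous paragraph, we obtain the chain $\opt(\mathcal{X}|B)\subseteq\mathcal{Y}\subseteq\mathcal{X}$, with all three sets $B$-consistent and non-empty. This is precisely the antecedent of Property~\ref{prop:insensitivity:to:non:optimal:elements}, taken with conditioning event $B$ and the pair of sets $\mathcal{X}$, $\mathcal{Y}$.

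Finally I would apply Property~\ref{prop:insensitivity:to:non:optimal:elements} directly, which yields $\opt(\mathcal{Y}|B)=\opt(\mathcal{X}|B)$, i.e.\ Eq.~\eqref{eq:opt:of:setsums:equality}. There is no real obstacle: essentially all the content has already been packaged into Lemma~\ref{lemma:opt:of:setsums:subseteq}, and the only point requiring care is to verify that the three assumed properties genuinely supply both halves of the argument---the inclusion $\opt(\mathcal{X}|B)\subseteq\mathcal{Y}$ via Lemma~\ref{lemma:opt:of:setsums:subseteq} (using Properties~\ref{prop:preservation:under:addition:of:elements} and~\ref{prop:backward:mixture:property}) and the collapse $\opt(\mathcal{Y}|B)=\opt(\mathcal{X}|B)$ via Property~\ref{prop:insensitivity:to:non:optimal:elements}---together with the $B$-consistency bookkeeping needed to invoke them.
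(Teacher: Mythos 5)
Your proof is correct and is essentially identical to the paper's: both obtain the chain $\opt(\mathcal{X}|B)\subseteq\biggambplus_{i=1}^n A_i\opt(\mathcal{X}_i|A_i\cap B)\subseteq\mathcal{X}$ from Lemma~\ref{lemma:opt:of:setsums:subseteq} and then apply Property~\ref{prop:insensitivity:to:non:optimal:elements} to conclude. Your additional $B$-consistency bookkeeping is a valid (and slightly more explicit) spelling-out of what the paper leaves implicit.
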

\begin{proof}
By Lemma~\ref{lemma:opt:of:setsums:subseteq} and the definition of $\opt$,
\begin{equation*}
 \opt\Bigg(\biggambplus_{i=1}^nA_i\mathcal{X}_i\Bigg|B\Bigg) \subseteq \biggambplus_{i=1}^n A_i \opt(\mathcal{X}_i|A_i \cap B)
 \subseteq \biggambplus_{i=1}^n A_i \mathcal{X}_i,
\end{equation*}
whence Eq.~\eqref{eq:opt:of:setsums:equality} follows by Property~\ref{prop:insensitivity:to:non:optimal:elements}.
\end{proof}

\begin{lemma}\label{lemma:gambnorm:norm:relationship:chancenodes}
  For any consistent decision tree $\tree=\bigchancenodemixture_{i=1}^n \event_i\tree_i$ and any choice function $\opt$ satisfying Property~\ref{prop:backward:conditioning:property},
  \begin{equation}\label{eq:lemma:gambnorm:norm:chancenodes:helper}
    \normgambles(\normoper_{\opt}(\tree)) = \normgambles\Bigg(\normoper_{\opt}\Bigg(\bigchancenodemixture_{i=1}^n \event_i \normoper_{\opt}(\tree_i)\Bigg)\Bigg)
  \end{equation}
  implies
  \begin{equation*}
    \normoper_{\opt}(\tree) = \normoper_{\opt}\Bigg(\bigchancenodemixture_{i=1}^n \event_i \normoper_{\opt}(\tree_i)\Bigg).
  \end{equation*}
\end{lemma}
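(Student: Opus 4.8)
The plan is to follow the proof of Lemma~\ref{lemma:gamble:normopt:equivalence:chance:nodes} almost verbatim, replacing its single appeal to Property~\ref{prop:conditioning:property} by an appeal to the weaker Property~\ref{prop:backward:conditioning:property}; this mirrors exactly how Lemma~\ref{lemma:opt:of:setsums:subseteq} is the one-sided analogue of Lemma~\ref{lemma:opt:of:setsums:strong:equality}. Write $\setoftrees=\bigchancenodemixture_{i=1}^n\event_i\normoper_{\opt}(\tree_i)$ and $A=\treeevent{\tree}$. First I would record two elementary facts. Every element of $\setoftrees$ is a mixture $\bigchancenodemixture_{i=1}^n\event_i U_i$ of normal form decisions $U_i\in\normoper_{\opt}(\tree_i)\subseteq\nfd(\tree_i)$ under a chance root, hence is itself a normal form decision of $\tree$; therefore $\nfd(\setoftrees)=\setoftrees\subseteq\nfd(\tree)$. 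Moreover, the set-version of Eq.~\eqref{eq:gambofnormoptisopt}, namely $\normgambles(\normoper_{\opt}(\setoftrees))=\opt(\normgambles(\setoftrees)|A)$, holds by the same singleton argument as for single trees. With these, the hypothesis~\eqref{eq:lemma:gambnorm:norm:chancenodes:helper} reads simply $\opt(\normgambles(\tree)|A)=\opt(\normgambles(\setoftrees)|A)$.

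Next I establish two inclusions. The inclusion $\normoper_{\opt}(\setoftrees)\subseteq\normoper_{\opt}(\tree)$ needs no property of $\opt$: if $U\in\normoper_{\opt}(\setoftrees)$, then $U\in\nfd(\setoftrees)=\setoftrees\subseteq\nfd(\tree)$ and $\normgambles(U)\subseteq\opt(\normgambles(\setoftrees)|A)=\opt(\normgambles(\tree)|A)$, so $U\in\normoper_{\opt}(\tree)$. For the converse, take $U\in\normoper_{\opt}(\tree)$ and write $U=\bigchancenodemixture_{i=1}^n\event_i U_i$ with $U_i\in\nfd(\tree_i)$ and $\normgambles(U_i)=\{X_i\}$. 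Membership $U\in\normoper_{\opt}(\setoftrees)$ requires two things: that $\normgambles(U)\subseteq\opt(\normgambles(\setoftrees)|A)$, which is again immediate from the hypothesis, and that $U\in\nfd(\setoftrees)=\setoftrees$, i.e.\ $U_k\in\normoper_{\opt}(\tree_k)$ for each $k$. The latter is the real content.

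To prove $U_k\in\normoper_{\opt}(\tree_k)$, note that $\normgambles(U)\subseteq\normgambles(\normoper_{\opt}(\setoftrees))\subseteq\normgambles(\setoftrees)=\biggambplus_{i=1}^n\event_i\normgambles(\normoper_{\opt}(\tree_i))$, so by uniqueness of the decomposition of a gamble over the partition $\event_1,\dots,\event_n$ there is, for each $k$, some $V_k\in\normoper_{\opt}(\tree_k)$ with gamble $Y_k$ satisfying $\event_k Y_k=\event_k X_k$. As in Lemma~\ref{lemma:opt:of:setsums:subseteq}, I group the root chance node into $\event_k$ versus $\compl{\event_k}$: choose a partition $(A'_j)_{j\neq k}$ of $\pspace$ with $\compl{\event_k}\cap A'_j=\event_j$ and set $\mathcal{Z}=\biggambplus_{j\neq k}A'_j\normgambles(\tree_j)$, which is $\compl{\event_k}\cap A$-consistent, giving $\normgambles(\tree)=\event_k\normgambles(\tree_k)\gambplus\compl{\event_k}\mathcal{Z}$. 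I then apply Property~\ref{prop:backward:conditioning:property} with $A:=\event_k$, $B:=\treeevent{\tree}$, $\mathcal{X}:=\normgambles(\tree_k)$, playing the known-optimal gamble $Y_k$ as the property's ``$X$'' and the target $X_k$ as its ``$Y$''; note $\{X_k,Y_k\}\subseteq\normgambles(\tree_k)$, $\event_k Y_k=\event_k X_k$, and $Y_k\in\opt(\normgambles(\tree_k)|\treeevent{\tree_k})$ since $V_k\in\normoper_{\opt}(\tree_k)$.

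The step I expect to be the main obstacle is verifying the side condition of Property~\ref{prop:backward:conditioning:property} for $Y_k$, i.e.\ that $\event_k Y_k\gambplus\compl{\event_k}Z$ lies in $\opt(\event_k\normgambles(\tree_k)\gambplus\compl{\event_k}\mathcal{Z}|B)=\opt(\normgambles(\tree)|A)$ for some $Z\in\mathcal{Z}$. The crux is that $U\in\normoper_{\opt}(\tree)$ supplies exactly such optimality for $X_k$: writing $\normgambles(U)=\{X\}$ with $X=\event_k X_k\gambplus\compl{\event_k}Z'$, where $Z'=\biggambplus_{j\neq k}A'_j X_j\in\mathcal{Z}$ is the contribution of the other branches, we have $X\in\opt(\normgambles(\tree)|A)$; and since $\event_k Y_k=\event_k X_k$, the mixture $\event_k Y_k\gambplus\compl{\event_k}Z'$ equals $X$ and is hence optimal. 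So the side condition holds with $Z=Z'$, and Property~\ref{prop:backward:conditioning:property} delivers $X_k\in\opt(\normgambles(\tree_k)|\treeevent{\tree_k})$, whence $\normgambles(U_k)\subseteq\opt(\normgambles(\tree_k)|\treeevent{\tree_k})$ and $U_k\in\normoper_{\opt}(\tree_k)$, completing both the missing inclusion and the proof. The grouping requires $\compl{\event_k}\cap A\neq\emptyset$, which holds for $n\geq 2$ by consistency of $\tree$; the degenerate case $n=1$, where $\event_1=\pspace$, is trivial since then $\normgambles(\tree)=\normgambles(\tree_1)$.
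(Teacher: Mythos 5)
Your proposal is correct and follows essentially the same route as the paper's proof: the easy inclusion via the hypothesis and Eq.~\eqref{eq:gambofnormoptisopt}, then the converse by decomposing $U=\bigchancenodemixture_{i=1}^n\event_i U_i$, extracting $V_k\in\normoper_{\opt}(\tree_k)$ with $\event_k\normgambles(V_k)=\event_k\normgambles(U_k)$, grouping the remaining branches into a $\compl{\event}_k\cap\treeevent{\tree}$-consistent $\mathcal{Z}$, and applying Property~\ref{prop:backward:conditioning:property}. The only (harmless) differences are cosmetic: you use a general partition $(A'_j)_{j\neq k}$ where the paper writes out the explicit grouping $(\event_1\cup\event_2),\event_3,\dots,\event_n$, and you witness the side condition with the tail of $U$ itself rather than the paper's auxiliary decision $V=\bigchancenodemixture_{i=1}^n\event_i V_i$ --- both yield the same optimal gamble.
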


\begin{proof}
  In this proof we require Eq.~\eqref{eq:gambofnormoptisopt}, and the following consequence of Eq.~\eqref{eq:normgambles:chancenodes:for:sets:of:trees}:
  \begin{multline}\label{eq:lemma:gambnorm:norm:chancenodes:helper1}
    \normgambles\Bigg(\normoper_{\opt}\Bigg(\bigchancenodemixture_{i=1}^n \event_i \normoper_{\opt}(\tree_i)\Bigg)\Bigg) \\ = \opt\Bigg(\biggambplus_{i=1}^n \event_i \opt(\normgambles(\tree_i) | \treeevent{\tree}\cap\event_i)\Bigg|\treeevent{\tree}\Bigg).
  \end{multline}
  We first show that
  \begin{equation*}
    \normoper_{\opt}(\tree) \supseteq \normoper_{\opt}\Bigg(\bigchancenodemixture_{i=1}^n \event_i \normoper_{\opt}(\tree_i)\Bigg).
  \end{equation*}
  
  Consider a normal form decision $\atree\in\normoper_{\opt}(\bigchancenodemixture_{i=1}^n \event_i \normoper_{\opt}(\tree_i))$. We have, by Eq.~\eqref{eq:lemma:gambnorm:norm:chancenodes:helper},
  \begin{equation*}
    \normgambles(\atree) \subseteq \normgambles\Bigg(\normoper_{\opt}\Bigg(\bigchancenodemixture_{i=1}^n \event_i \normoper_{\opt}(\tree_i)\Bigg)\Bigg) = \normgambles(\normoper_{\opt}(\tree)).
  \end{equation*}
  So, there exists a normal form decision $V\in\normoper_{\opt}(\tree)$ such that $\normgambles(V)=\normgambles(\atree)$. Since $\atree\in\nfd(\tree)$, by definition of $\normoper_{\opt}$, $\atree\in\normoper_{\opt}(\tree)$, which establishes the claim.
  
  Next, we show that
  \begin{equation*}
    \normoper_{\opt}(\tree) \subseteq \normoper_{\opt}\Bigg(\bigchancenodemixture_{i=1}^n \event_i \normoper_{\opt}(\tree_i)\Bigg).
  \end{equation*}
  Consider a normal form decision $\atree\in\normoper_{\opt}(\tree)$. We know by Eq.~\eqref{eq:lemma:gambnorm:norm:chancenodes:helper} and Eq.~\eqref{eq:lemma:gambnorm:norm:chancenodes:helper1} that
  \begin{equation*}
    \normgambles(\atree) \subseteq \opt\Bigg(\biggambplus_{i=1}^n \event_i \opt(\normgambles(\tree_i) | \treeevent{\tree}\cap\event_i)\Bigg| \treeevent{\tree}\Bigg).
  \end{equation*}
  We can write $\atree=\bigchancenodemixture_{i=1}^n \event_i \atree_i$, where $\atree_i \in\nfd(\tree_i)$, so by Eq.~\eqref{eq:normgambles:chancenodes},
  \begin{equation*}
    \biggambplus_{i=1}^n \event_i \normgambles(\atree_i) \subseteq \opt\Bigg(\biggambplus_{i=1}^n \event_i \opt(\normgambles(\tree_i)| \treeevent{\tree}\cap\event_i)\Bigg|\treeevent{\tree}\Bigg).
  \end{equation*}
  Consider normal form decisions $V_i \in\normoper_{\opt}(\tree_i)$. The above equation, and Eq.~\eqref{eq:gambofnormoptisopt} tell us that, for each $i$, we can find $V_i$ such that $\event_i \normgambles(V_i) = \event_i \normgambles(\atree_i)$. Of course, because $V_i \in \normoper_{\opt}(\tree_i)$,
  \begin{equation*}
    \normgambles(V_i) \subseteq \opt(\normgambles(\tree_i) | \event_i \cap \treeevent{\tree}).
  \end{equation*}
  We further have, for $V=\bigchancenodemixture_{i=1}^n \event_i V_i$, $\normgambles(V)=\normgambles(\atree)$ and $V\in\nfd(\tree)$, and so $V\in\normoper_{\opt}(\tree)$.
  
  If we can establish that
  \begin{equation}\label{eq:lemma:gambnorm:norm:chancenodes:helper2}
   \normgambles(\atree_i) \subseteq \opt(\normgambles(\tree_i) | \event_i \cap \treeevent{\tree}),
   \end{equation}
   then, by definition of $\normoper_{\opt}$ and because $\atree_i \in \nfd(\tree_i)$, it follows that $\atree_i \in \normoper_{\opt}(\tree_i)$. So, in that case, there is a $V\in\normoper_{\opt}(\bigchancenodemixture_{i=1}^n \event_i \normoper_{\opt}(\tree_i))$ such that $\normgambles(V)=\normgambles(U)$, and $U\in\nfd(\bigchancenodemixture_{i=1}^n \event_i \normoper_{\opt}(\tree_i))$. Therefore by definition of $\normoper_{\opt}$, we will have $U\in\normoper_{\opt}(\bigchancenodemixture_{i=1}^n \event_i \normoper_{\opt}(\tree_i))$, establishing the desired result.
  
  We show that Eq.~\eqref{eq:lemma:gambnorm:norm:chancenodes:helper2} indeed holds by Property~\ref{prop:backward:conditioning:property}. When $n=1$ the result is trivial, so assume $n\geq2$. Observe that both singletons $\normgambles(\atree_i)$ and $\normgambles(V_i)$ are subsets of $\normgambles(\tree_i)$, $\event_i \normgambles(\atree_i) = \event_i \normgambles(V_i)$, and $\normgambles(V_i) \subseteq \opt(\normgambles(\tree_i) | \event_i \cap \treeevent{\tree})$. Further, $\normgambles(\tree_i)$ is $\event_i \cap \treeevent{\tree}$-consistent. We are almost ready to apply Property~\ref{prop:backward:conditioning:property}.
  
  We know that
  \begin{align*}
    \normgambles(V) = \biggambplus_{i=1}^n \event_i \normgambles(V_i) &\subseteq \opt(\normgambles(\tree) | \treeevent{\tree}) \\
    &= \opt\Bigg(\biggambplus_{i=1}^n \event_i \normgambles(\tree_i) \Bigg| \treeevent{\tree}\Bigg).
  \end{align*}
  Letting
  \begin{equation*}
    Z = (\event_1 \cup \event_2) \normgambles(V_2) \gambplus \event_3 \normgambles(V_3) \gambplus \dots \gambplus \event_n \normgambles(V_n)
  \end{equation*}
  and
  \begin{equation*}
    \mathcal{Z} = (\event_1 \cup \event_2) \normgambles(\tree_2) \gambplus \event_3 \normgambles(\tree_3) \gambplus \dots \gambplus \event_n \normgambles(\tree_n),
  \end{equation*}
  we see that $Z \in\mathcal{Z}$ and $\mathcal{Z}$ is $\compl{\event}_1 \cap \treeevent{\tree}$-consistent. Further,
  \begin{equation*}
    \event_1 \normgambles(V_1) \gambplus \compl{\event}_1 Z = \biggambplus_{i=1}^n \event_i \normgambles(V_i) = \normgambles(V)
  \end{equation*}
  and
  \begin{equation*}
        \event_1 \normgambles(\tree_1) \gambplus \compl{\event}_1 \mathcal{Z} = \biggambplus_{i=1}^n \event_i \normgambles(\tree_i).
  \end{equation*}
  We see that
  \begin{equation*}
    E_1 \normgambles(V) \gambplus \compl{E}_1 Z \subseteq \opt(E_1 \normgambles(\tree_1) \gambplus \compl{E}_1 \mathcal{Z} | \treeevent{\tree}).
  \end{equation*}
  Hence we have found a $\mathcal{Z}$ and a $Z\in\mathcal{Z}$ required to apply Property~\ref{prop:backward:conditioning:property}. Finally, by $E_1 \normgambles(\atree_1) = E_1 \normgambles(V_1)$, and Property~\ref{prop:backward:conditioning:property}, we have
  \begin{equation*}
    \normgambles(\atree_1) \subseteq \opt(\normgambles(\tree_1) | \event_1 \cap \treeevent{\tree}).
  \end{equation*}
  This argument applies for any index $i$ and therefore Eq.~\eqref{eq:lemma:gambnorm:norm:chancenodes:helper2} has been shown, establishing the result.
\end{proof}

\begin{lemma}\label{lemma:gambnorm:norm:relationship:decnodes}
	For any consistent decision tree $\tree=\bigdecnodeunion_{i=1}^n \tree_i$, and any choice function $\opt$ satisfying Property~\ref{prop:preservation:under:addition:of:elements},
	\begin{equation}\label{eq:lemma:gambnorm:norm:decnodes:helper}
		\normgambles(\normoper_{\opt}(\tree)) = \normgambles\Bigg(\normoper_{\opt}\Bigg(\bigdecnodeunion_{i=1}^n \normoper_{\opt}(\tree_i)\Bigg)\Bigg)
	\end{equation}
	implies
	\begin{equation*}
		\normoper_{\opt}(\tree) = \normoper_{\opt}\Bigg(\bigdecnodeunion_{i=1}^n \normoper_{\opt}(\tree_i)\Bigg).
	\end{equation*}
\end{lemma}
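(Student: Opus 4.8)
The plan is to follow the template of Lemma~\ref{lemma:gambnorm:norm:relationship:chancenodes}, establishing the two inclusions separately; the decision-node case is markedly simpler, as it requires no analogue of the mixture construction needed there. Throughout I would write $A=\treeevent{\tree}$, observing that $\treeevent{\tree_i}=A$ for every $i$ because a decision node does not alter the conditioning event, and I would set $\setoftrees=\bigdecnodeunion_{i=1}^n\normoper_{\opt}(\tree_i)$. A first observation is that $\nfd(\setoftrees)\subseteq\nfd(\tree)$: every element of $\nfd(\setoftrees)$ has the form $\decnodeunion U$ with $U\in\normoper_{\opt}(\tree_j)\subseteq\nfd(\tree_j)$ for some $j$, and such a $\decnodeunion U$ is also a normal form decision of $\tree$ (select arc $j$, then follow $U$), with $\normgambles(\decnodeunion U)=\normgambles(U)$.

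For the inclusion $\normoper_{\opt}(\tree)\supseteq\normoper_{\opt}(\setoftrees)$, I would take $\atree\in\normoper_{\opt}(\setoftrees)$; by the hypothesis~\eqref{eq:lemma:gambnorm:norm:decnodes:helper} and Eq.~\eqref{eq:gambofnormoptisopt},
\begin{equation*}
  \normgambles(\atree)\subseteq\normgambles(\normoper_{\opt}(\setoftrees))=\normgambles(\normoper_{\opt}(\tree))=\opt(\normgambles(\tree)|A).
\end{equation*}
Since also $\atree\in\nfd(\setoftrees)\subseteq\nfd(\tree)$, the definition of $\normoper_{\opt}$ gives $\atree\in\normoper_{\opt}(\tree)$. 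This is the same routine step as in the chance-node lemma.

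The substantive direction is $\normoper_{\opt}(\tree)\subseteq\normoper_{\opt}(\setoftrees)$. Given $\atree\in\normoper_{\opt}(\tree)$, I would write $\atree=\decnodeunion U_i$ for the arc $i$ selected at the root, where $U_i\in\nfd(\tree_i)$, and put $\{X\}=\normgambles(\atree)=\normgambles(U_i)$. From $\atree\in\normoper_{\opt}(\tree)$ we have $X\in\opt(\normgambles(\tree)|A)$, and clearly $X\in\normgambles(\tree_i)$. The key step is to deduce $U_i\in\normoper_{\opt}(\tree_i)$, i.e.\ $X\in\opt(\normgambles(\tree_i)|A)$: applying Property~\ref{prop:preservation:under:addition:of:elements} with $\mathcal{Y}=\normgambles(\tree_i)\subseteq\normgambles(\tree)=\mathcal{X}$ (both $A$-consistent by consistency of $\tree$) yields $\opt(\normgambles(\tree_i)|A)\supseteq\opt(\normgambles(\tree)|A)\cap\normgambles(\tree_i)\ni X$. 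Hence $U_i\in\normoper_{\opt}(\tree_i)$, so that, filling the remaining arcs with any members of the non-empty sets $\normoper_{\opt}(\tree_j)$ for $j\ne i$, we get $\atree=\decnodeunion U_i\in\nfd(\setoftrees)$. Finally, by hypothesis~\eqref{eq:lemma:gambnorm:norm:decnodes:helper}, $\normgambles(\atree)\subseteq\normgambles(\normoper_{\opt}(\tree))=\normgambles(\normoper_{\opt}(\setoftrees))\subseteq\opt(\normgambles(\setoftrees)|A)$, and so $\atree\in\normoper_{\opt}(\setoftrees)$ by definition.

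I expect the main (though mild) obstacle to be exactly the membership $U_i\in\normoper_{\opt}(\tree_i)$ in the second inclusion. This is the point at which the subtree-perfectness analogue, Lemma~\ref{lemma:gamble:normopt:equivalence:decision:nodes}, invoked the full Property~\ref{prop:intersection:property}; here one must check that the weaker Property~\ref{prop:preservation:under:addition:of:elements} is enough, which it is because backward induction only needs the \emph{chosen} subtree decision to survive restriction as an optimal element, rather than an exact intersection identity for the whole optimal set. Everything else is bookkeeping with the definitions of $\normgambles$ and $\normoper_{\opt}$ on sets of trees.
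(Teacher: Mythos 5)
Your proposal is correct and takes essentially the same route as the paper's own proof: both split the claim into two inclusions, dispatch the easy one directly from hypothesis~\eqref{eq:lemma:gambnorm:norm:decnodes:helper} and Eq.~\eqref{eq:gambofnormoptisopt}, and isolate the same key step---deducing $U_i\in\normoper_{\opt}(\tree_i)$ from Property~\ref{prop:preservation:under:addition:of:elements} applied to $\normgambles(\tree_i)\subseteq\normgambles(\tree)$. Your explicit remarks that $\nfd\big(\bigdecnodeunion_{i=1}^n\normoper_{\opt}(\tree_i)\big)\subseteq\nfd(\tree)$ and that the remaining arcs can be filled from the non-empty sets $\normoper_{\opt}(\tree_j)$ only make precise bookkeeping the paper leaves implicit.
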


\begin{proof}
  In this proof we require of Eq.~\eqref{eq:gambofnormoptisopt}. For clarity, let $A=\treeevent{\tree}=\treeevent{\tree_i}$. We first show that
  \begin{equation*}
    \normoper_{\opt}(\tree) \supseteq \normoper_{\opt}\Bigg(\bigdecnodeunion_{i=1}^n \normoper_{\opt}(\tree_i)\Bigg).
  \end{equation*}
   Consider a normal form decision $\atree\in\normoper_{\opt}(\bigdecnodeunion_{i=1}^n \normoper_{\opt}(\tree_i))$. To show that $\atree\in\normoper_{\opt}(\tree)$, we must show that $\atree\in\nfd(\tree)$ and $\normgambles(\atree)\subseteq\normgambles(\normoper_{\opt}(\tree))$. The former is obvious, and the latter is established by Eq.~\eqref{eq:lemma:gambnorm:norm:decnodes:helper}:
  \begin{equation*}
    \normgambles(\atree) \subseteq \normgambles\Bigg(\normoper_{\opt}\Bigg(\bigdecnodeunion_{i=1}^n \normoper_{\opt}(\tree_i)\Bigg)\Bigg) = \normgambles(\normoper_{\opt}(\tree)).
  \end{equation*}
  
  Next we show that  
   \begin{equation*}
    \normoper_{\opt}(\tree) \subseteq \normoper_{\opt}\Bigg(\bigdecnodeunion_{i=1}^n \normoper_{\opt}(\tree_i)\Bigg).
  \end{equation*} 
  Let $\atree \in\normoper_{\opt}(\tree)$. To show that $\atree\in\normoper_{\opt}(\bigdecnodeunion_{i=1}^n \normoper_{\opt}(\tree_i))$ we must show that $\atree\in\nfd(\bigdecnodeunion_{i=1}^n \normoper_{\opt}(\tree_i))$ and that
  \begin{equation*}
    \normgambles(\atree)\subseteq\normgambles\Bigg(\normoper_{\opt}\Bigg(\bigdecnodeunion_{i=1}^n \normoper_{\opt}(\tree_i)\Bigg)\Bigg).
  \end{equation*}
  The latter requirement follows immediately from Eq.~\eqref{eq:lemma:gambnorm:norm:decnodes:helper}:
  \begin{equation*}
    \normgambles(\atree) \subseteq \normgambles(\normoper_{\opt}(\tree)) = \normgambles\Bigg(\normoper_{\opt}\Bigg(\bigdecnodeunion_{i=1}^n \normoper_{\opt}(\tree_i)\Bigg)\Bigg).
  \end{equation*}
  
  We now prove that $\atree\in\nfd(\bigdecnodeunion_{i=1}^n \normoper_{\opt}(\tree_i))$. Let $V$ be $\atree$ with the root node removed, that is, $U=\decnodeunion V$. Clearly, for some $k$, $V\in\nfd(\tree_k)$. It suffices to show that $V\in\normoper_{\opt}(\tree_k)$. Let $\{X\}=\normgambles(\atree)=\normgambles(V)$. We know that $X\in\opt(\normgambles(\tree)|A)$, and also that $X\in\normgambles(\tree_k)$. If we can prove that $X\in\normgambles(\normoper_{\opt}(\tree_k))=\opt(\normgambles(\tree_k) | A)$, then $V\in\normoper_{\opt}(\tree_k)$.
  Indeed, using Property~\ref{prop:preservation:under:addition:of:elements} and $\normgambles(\tree_k)\subseteq\normgambles(\tree)$ we have
  \begin{equation*}
    \opt(\normgambles(\tree_k) | A) \supseteq \normgambles(\tree_k) \cap \opt(\normgambles(\tree) | A).
  \end{equation*}
  So we have shown that indeed $X\in\opt(\normgambles(\tree_k) | A)$, establishing the claim.
\end{proof}

\begin{lemma}\label{lemma:backward:conditioning:property:is:necessary}
	If $\backopt(\tree)=\normoper_{\opt}(\tree)$ for any consistent decision tree $\tree$, then $\opt$ satisfies Property~\ref{prop:backward:conditioning:property}.
\end{lemma}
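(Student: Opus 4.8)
The plan is to reuse the construction from the necessity proof of Property~\ref{prop:conditioning:property} in Lemma~\ref{lemma:conditioning:intersection:mixture:are:necessary:for:factuality}, replacing its appeal to subtree perfectness by the backward-induction identity $\backopt=\normoper_{\opt}$. Concretely, I would work with a tree shaped like the left tree of Figure~\ref{fig:conditioning:intersection:mixture:are:necessary:for:factuality}, except that the $\compl{A}$ branch now carries a genuine subtree (not a single leaf) so as to realise the whole set $\mathcal{Z}$.

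First I would set up the tree. Given the data $A$, $B$, $\mathcal{X}$, $X$, $Y$, $\mathcal{Z}$, $Z$ of Property~\ref{prop:backward:conditioning:property}, note that $A$ is non-trivial since $A\cap B\neq\emptyset$ and $\compl{A}\cap B\neq\emptyset$. Using Definition~\ref{def:gambles:consistent}, the $A\cap B$-consistency of $\mathcal{X}$ and the $\compl{A}\cap B$-consistency of $\mathcal{Z}$ let me pick consistent trees $\tree_1$, $\tree_2$ with $\treeevent{\tree_1}=A\cap B$, $\normgambles(\tree_1)=\mathcal{X}$, $\treeevent{\tree_2}=\compl{A}\cap B$, and $\normgambles(\tree_2)=\mathcal{Z}$. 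Setting $\tree=A\tree_1\chancenodemixture\compl{A}\tree_2$ with $\treeevent{\tree}=B$ gives a consistent tree with $\normgambles(\tree)=A\mathcal{X}\gambplus\compl{A}\mathcal{Z}$.

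The heart of the argument is then to pick out one good decision and track it. I would choose $V\in\nfd(\tree_1)$ with $\normgambles(V)=\{Y\}$ and $W\in\nfd(\tree_2)$ with $\normgambles(W)=\{Z\}$; the combined decision $AV\chancenodemixture\compl{A}W$ has gamble $AY\gambplus\compl{A}Z$, which \emph{equals} $AX\gambplus\compl{A}Z$ because $AX=AY$. Since the side condition of the property says precisely that $AX\gambplus\compl{A}Z\in\opt(A\mathcal{X}\gambplus\compl{A}\mathcal{Z}|B)=\opt(\normgambles(\tree)|\treeevent{\tree})$, Definition~\ref{def:normoper} yields $AV\chancenodemixture\compl{A}W\in\normoper_{\opt}(\tree)$. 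Now invoking the hypothesis $\backopt=\normoper_{\opt}$ on $\tree$ and on $\tree_1,\tree_2$, and unfolding the chance-node clause Eq.~\eqref{eq:backopt:chance},
\[
  \backopt(\tree)
  =\normoper_{\opt}\left(A\normoper_{\opt}(\tree_1)\chancenodemixture\compl{A}\normoper_{\opt}(\tree_2)\right)
  \subseteq A\normoper_{\opt}(\tree_1)\chancenodemixture\compl{A}\normoper_{\opt}(\tree_2),
\]
so $AV\chancenodemixture\compl{A}W$ lies in the right-hand set. By uniqueness of the decomposition at the root chance node this forces $V\in\normoper_{\opt}(\tree_1)$, whence $\{Y\}=\normgambles(V)\subseteq\normgambles(\normoper_{\opt}(\tree_1))=\opt(\mathcal{X}|A\cap B)$ by Eq.~\eqref{eq:gambofnormoptisopt}, i.e.\ $Y\in\opt(\mathcal{X}|A\cap B)$, as required.

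The hard part will be the inclusion $\backopt(\tree)\subseteq A\normoper_{\opt}(\tree_1)\chancenodemixture\compl{A}\normoper_{\opt}(\tree_2)$: this is exactly where backward induction must do the work that subtree perfectness did in Lemma~\ref{lemma:conditioning:intersection:mixture:are:necessary:for:factuality}. Its content is that every decision surviving $\backopt$ must already restrict to a backward-optimal substrategy in each branch, yet our globally optimal decision uses the possibly branch-suboptimal $Y$ on $A$, which is what pins $Y$ down as optimal in the subtree. Beyond this I would only need the routine consistency bookkeeping (that $\treeevent{\tree_i}$ really is $\treeevent{\tree}$ intersected with the branch event) and the identification $AY\gambplus\compl{A}Z=AX\gambplus\compl{A}Z$; neither should cause trouble. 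Notably, the argument never uses $X\in\opt(\mathcal{X}|A\cap B)$, so it in fact establishes a marginally stronger statement than Property~\ref{prop:backward:conditioning:property} demands.
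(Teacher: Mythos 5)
Your proposal is correct and takes essentially the same route as the paper's own proof: the same tree $A\tree_1\chancenodemixture\compl{A}\tree_2$ with $\normgambles(\tree_1)=\mathcal{X}$, $\normgambles(\tree_2)=\mathcal{Z}$, tracking the normal form decision with gamble $AY\gambplus\compl{A}Z=AX\gambplus\compl{A}Z$ through $\normoper_{\opt}(\tree)=\backopt(\tree)$ and unfolding Eq.~\eqref{eq:backopt:chance} to force its $A$-branch into $\backopt(\tree_1)=\normoper_{\opt}(\tree_1)$, hence $Y\in\opt(\mathcal{X}|A\cap B)$; the inclusion you flag as the hard part is in fact immediate from Definition~\ref{def:backopt} together with the definition of $\normoper_{\opt}$ on sets of trees. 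Your closing observation is also accurate: the paper's proof likewise never invokes $X\in\opt(\mathcal{X}|A\cap B)$.
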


\begin{proof}
  Let $A$ and $B$ be non-empty events, and $\mathcal{X}$, $\mathcal{Z}$ be non-empty finite sets of gambles, such that the following properties hold: $A\cap B\neq \emptyset$, $\compl{A} \cap B \neq \emptyset$, $\mathcal{X}$ is $A\cap B$-consistent, $\mathcal{Z}$ is $\compl{A} \cap B$-consistent, and there are $X,Y\in\mathcal{X}$ such that $AX=AY$, $X \in\opt(\mathcal{X} | A\cap B)$, and $AX \gambplus \compl{A}Z \in\opt(A\mathcal{X}\gambplus \compl{A}\mathcal{Z} | B)$ for at least one $Z\in\mathcal{Z}$. If it is not possible to construct such a situation, then $\opt$ satisfies Property~\ref{prop:backward:conditioning:property} automatically. Otherwise, to prove that Property~\ref{prop:backward:conditioning:property} holds, we must show that $Y\in\opt(\mathcal{X}|A\cap B)$.
  
  Consider a consistent decision tree $\tree=A\tree_1 \chancenodemixture \compl{A}\tree_2$, where $\treeevent{\tree}=B$, $\normgambles(\tree_1)=\mathcal{X}$, and $\normgambles(\tree_2)=\mathcal{Z}$. Since $\mathcal{X}$ is $A\cap B$-consistent and $\mathcal{Z}$ is $\compl{A}\cap B$-consistent, we know from Definition~\ref{def:gambles:consistent} that there is such a $\tree$. We have $\normgambles(\normoper_{\opt}(\tree)) = \opt(\normgambles(\tree) | B)=\opt(A\mathcal{X} \gambplus \compl{A} \mathcal{Z} | B)$. So, $AX\gambplus\compl{A}Z\in\normgambles(\normoper_{\opt}(\tree))$, and of course $AX\gambplus\compl{A}Z=AY \gambplus\compl{A}Z$.
	
  Therefore, any normal form decision in $\nfd(\tree)$ that induces the gamble $AY \gambplus \compl{A} Z$ must be in $\normoper_{\opt}(\tree)$. In particular, by Lemma~\ref{lemma:gambisgambnfd} there is a normal form decision $U\in\nfd(\tree_1)$ such that $\normgambles(U)=\{Y\}$, and a normal form decision $V\in\nfd(\tree_2)$ such that $\normgambles(V)=\{Z\}$. So $AU\chancenodemixture\compl{A}V\in\nfd(\tree)$ and $\normgambles(AU \chancenodemixture \compl{A}V)=\{AY\gambplus\compl{A}Z\}$. Indeed, because $AX\gambplus\compl{A}Z\in\opt(A\mathcal{X}\gambplus\compl{A}\mathcal{Z}|B)$, it follows that $AU\chancenodemixture\compl{A}V\in\normoper_{\opt}(\tree)=\backopt(\tree)$. By definition, $\backopt(\tree) = \normoper_{\opt}(A\backopt(\tree_1) \chancenodemixture \compl{A}\backopt(\tree_2))$, and so it must hold that $U\in\backopt(\tree_1) = \normoper_{\opt}(\tree_1)$. Whence, $\normgambles(U) \subseteq \normgambles(\normoper_{\opt}(\tree_i)) = \opt(\mathcal{X} | A \cap B)$. Since $\normgambles(U)=\{Y\}$, we have $Y\in\opt(\mathcal{X} | A \cap B)$, establishing Property~\ref{prop:backward:conditioning:property}.
\end{proof}

\begin{lemma}\label{lemma:path:independence:is:necessary}
    If $\normgambles(\backopt(\tree))=\normgambles(\normoper_{\opt}(\tree))$ for any consistent decision tree $\tree$, then $\opt$ satisfies Property~\ref{prop:path:independence}.
\end{lemma}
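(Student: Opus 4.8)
The plan is to specialise the abstract statement of path independence to a single decision tree whose root is a decision node, and then recognise the two sides of the path-independence identity as $\normgambles(\normoper_{\opt}(\tree))$ and $\normgambles(\backopt(\tree))$ respectively. The hypothesis that these agree for every consistent tree then delivers Property~\ref{prop:path:independence} directly. So the only real work is to build the right tree and to unfold the two operators on it.

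Concretely, I would fix a non-empty event $A$ and non-empty finite $A$-consistent sets $\mathcal{X}_1,\dots,\mathcal{X}_n$. Using the $A$-consistency of each $\mathcal{X}_i$ via Definition~\ref{def:gambles:consistent}, I choose consistent decision trees $\tree_1,\dots,\tree_n$ with $\treeevent{\tree_i}=A$ and $\normgambles(\tree_i)=\mathcal{X}_i$, and set $\tree=\bigdecnodeunion_{i=1}^n\tree_i$. Since the root of $\tree$ is a decision node it introduces no chance arc, so $\treeevent{\tree}=\treeevent{\tree_i}=A$ and $\tree$ is consistent. By Eq.~\eqref{eq:gambofnormoptisopt} together with Eq.~\eqref{eq:normgambles:decisionnodes}, the left-hand side of Property~\ref{prop:path:independence} appears at once:
\[
  \normgambles(\normoper_{\opt}(\tree)) = \opt(\normgambles(\tree)\,|\,A) = \opt\Bigg(\bigcup_{i=1}^n\mathcal{X}_i\,\Bigg|\,A\Bigg).
\]

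For the other side I would unfold $\backopt$ at the root using Eq.~\eqref{eq:backopt:decision}, giving $\backopt(\tree)=\normoper_{\opt}\big(\bigdecnodeunion_{i=1}^n\backopt(\tree_i)\big)$. Applying the set-of-trees version of the identity $\normgambles(\normoper_{\opt}(\cdot))=\opt(\normgambles(\cdot)\,|\,A)$ (which holds for the same reason as Eq.~\eqref{eq:gambofnormoptisopt}: $\opt$ always returns a subset of its argument, and each gamble in the argument is induced by some normal form decision) together with Eq.~\eqref{eq:normgambles:decisionnodes:for:sets:of:trees}, I obtain
\[
  \normgambles(\backopt(\tree)) = \opt\Bigg(\bigcup_{i=1}^n\normgambles(\backopt(\tree_i))\,\Bigg|\,A\Bigg).
\]
Now the hypothesis applies to each consistent subtree $\tree_i$, so $\normgambles(\backopt(\tree_i))=\normgambles(\normoper_{\opt}(\tree_i))=\opt(\mathcal{X}_i\,|\,A)$, and hence $\normgambles(\backopt(\tree))=\opt\big(\bigcup_{i=1}^n\opt(\mathcal{X}_i\,|\,A)\,\big|\,A\big)$, which is exactly the right-hand side of Property~\ref{prop:path:independence}. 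Equating the two displays via the hypothesis $\normgambles(\backopt(\tree))=\normgambles(\normoper_{\opt}(\tree))$ yields path independence.

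I expect the only delicate points to be the bookkeeping rather than any genuine difficulty: verifying that the combined tree is consistent with $\treeevent{\tree}=A$, and justifying that the $\normgambles(\normoper_{\opt})=\opt$ identity and the recursive definition of $\backopt$ carry over cleanly to the set $\bigdecnodeunion_{i=1}^n\backopt(\tree_i)$ sitting at the root decision node. Both are immediate from the definitions once stated carefully, so the main (mild) obstacle is simply choosing the tree so that all conditioning events equal the single event $A$, ensuring the hypothesis can be invoked both on $\tree$ and on each $\tree_i$.
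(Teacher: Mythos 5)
Your proposal is correct and takes essentially the same route as the paper's own proof: construct $\tree=\bigdecnodeunion_{i=1}^n\tree_i$ with $\treeevent{\tree}=A$ and $\normgambles(\tree_i)=\mathcal{X}_i$, unfold $\backopt$ at the root via Eq.~\eqref{eq:backopt:decision}, and apply the hypothesis both to $\tree$ and to each $\tree_i$ to turn the two sides into the path-independence identity. The paper glosses the set-of-trees analogue of Eq.~\eqref{eq:gambofnormoptisopt} as ``repeated applications'' of that equation, so your explicit justification of this step matches the intended argument.
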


\begin{proof}
  Let $A$ be a non-empty event, and $\mathcal{X}_1$, \dots, $\mathcal{X}_n$ be non-empty finite sets of $A$-consistent gambles. Let $\tree=\bigdecnodeunion_{i=1}^n \tree_i$ be a consistent decision tree, with $\normgambles(\tree_i) = \mathcal{X}_i$ for each $i$, and where $\treeevent{\tree}=A$. The existence of $\tree$ is assured by $A$-consistency of $\mathcal{X}$ (see Definition~\ref{def:gambles:consistent}). We have
  \begin{align*}
    \opt\Bigg(\bigcup_{i=1}^n \mathcal{X}_i\Bigg|A\Bigg) &= \normgambles(\normoper_{\opt}(\tree)) = \normgambles(\backopt(\tree)) \\
    &= \normgambles\Bigg(\normoper_{\opt}\Bigg(\bigdecnodeunion_{i=1}^n \backopt(\tree_i)\Bigg)\Bigg)\\
  \intertext{From Eq.~\eqref{eq:gambofnormoptisopt}, we have $\normgambles(\normoper_{\opt}(\tree))=\opt(\normgambles(\tree) | A)$. Similarly, with repeated applications of Eq.~\eqref{eq:gambofnormoptisopt} and~Eq.~\eqref{eq:normgambles:decisionnodes:for:sets:of:trees},}
    &= \opt\Bigg(\normgambles\Bigg(\bigdecnodeunion_{i=1}^n \backopt(\tree_i)) \Bigg| A\Bigg) \\
    &= \opt\Bigg(\bigcup_{i=1}^n \normgambles(\backopt(\tree_i)) \Bigg| A\Bigg) \\
    &= \opt\Bigg(\bigcup_{i=1}^n \normgambles(\normoper_{\opt}(\tree_i)) \Bigg| A\Bigg) \\
    &= \opt\Bigg(\bigcup_{i=1}^n \opt(\normgambles(\tree_i) | A) \Bigg| A\Bigg).
  \intertext{Finally, we note that $\normgambles(\tree)=\bigcup_{i=1}^n \mathcal{X}_i$ and $\normgambles(\tree_i)=\mathcal{X}_i$. Therefore,}
    &= \opt\Bigg(\bigcup_{i=1}^n \opt(\mathcal{X}_i | A) \Bigg| A\Bigg).
  \end{align*}
\end{proof}

\begin{lemma}\label{lemma:backward:mixture:property:is:necessary}
  If $\normgambles(\backopt(\tree))=\normgambles(\normoper_{\opt}(\tree))$ for any consistent decision tree $\tree$, then $\opt$ satisfies Property~\ref{prop:backward:mixture:property}.
\end{lemma}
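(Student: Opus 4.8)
The plan is to mimic the construction used in the proof of Lemma~\ref{lemma:backward:conditioning:property:is:necessary}, building a single chance-node tree whose two branches separate the roles of $\mathcal{X}$ and $Z$. Fix events $A$, $B$ with $A\cap B\neq\emptyset$ and $\compl{A}\cap B\neq\emptyset$, a $\compl{A}\cap B$-consistent gamble $Z$, and a non-empty finite $A\cap B$-consistent set $\mathcal{X}$. Using Definition~\ref{def:gambles:consistent}, I would pick consistent trees $\tree_1$, $\tree_2$ with $\treeevent{\tree_1}=A\cap B$, $\normgambles(\tree_1)=\mathcal{X}$, $\treeevent{\tree_2}=\compl{A}\cap B$, $\normgambles(\tree_2)=\{Z\}$, and form $\tree=A\tree_1\chancenodemixture\compl{A}\tree_2$, so that $\treeevent{\tree}=B$ and $\normgambles(\tree)=A\mathcal{X}\gambplus\compl{A}Z$. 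All three trees are consistent because the relevant conditioning events are non-empty.

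Next I would compute the two sides of the hypothesis $\normgambles(\backopt(\tree))=\normgambles(\normoper_{\opt}(\tree))$ for this tree. On one side, Eq.~\eqref{eq:gambofnormoptisopt} gives
\[
\normgambles(\normoper_{\opt}(\tree))=\opt(\normgambles(\tree)\mid B)=\opt(A\mathcal{X}\gambplus\compl{A}Z\mid B),
\]
which is precisely the left-hand side of Property~\ref{prop:backward:mixture:property}. On the other side, the defining equation~\eqref{eq:backopt:chance} for $\backopt$ at a chance node, followed by Eq.~\eqref{eq:gambofnormoptisopt} and Eq.~\eqref{eq:normgambles:chancenodes:for:sets:of:trees}, yields
\[
\normgambles(\backopt(\tree))=\opt\bigl(A\normgambles(\backopt(\tree_1))\gambplus\compl{A}\normgambles(\backopt(\tree_2))\bigm| B\bigr).
\]
Here I would invoke the hypothesis once more, now applied to the subtrees $\tree_1$ and $\tree_2$, to replace $\normgambles(\backopt(\tree_1))$ by $\normgambles(\normoper_{\opt}(\tree_1))=\opt(\mathcal{X}\mid A\cap B)$ and $\normgambles(\backopt(\tree_2))$ by $\normgambles(\normoper_{\opt}(\tree_2))=\{Z\}$.

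Combining, the hypothesis for $\tree$ turns into the identity
\[
\opt(A\mathcal{X}\gambplus\compl{A}Z\mid B)=\opt\bigl(A\opt(\mathcal{X}\mid A\cap B)\gambplus\compl{A}Z\bigm| B\bigr).
\]
The final step is the easy one: since any choice function returns a subset of its argument, the right-hand side is contained in $A\opt(\mathcal{X}\mid A\cap B)\gambplus\compl{A}Z$, and hence so is the left-hand side, which is exactly the inclusion asserted by Property~\ref{prop:backward:mixture:property}. I do not expect a genuine obstacle here; the only points demanding care are verifying consistency of the constructed trees (so that $\normoper_{\opt}$ and $\backopt$ are defined on them and on their subtrees) and keeping the conditioning events straight, namely $\treeevent{\tree_1}=\treeevent{\tree}\cap A=A\cap B$ and $\treeevent{\tree_2}=\compl{A}\cap B$. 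Notably, Property~\ref{prop:preservation:under:addition:of:elements} is \emph{not} needed for this necessity direction, in contrast to the sufficiency argument of Lemma~\ref{lemma:opt:of:setsums:subseteq}.
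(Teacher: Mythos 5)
Your proposal is correct and follows essentially the same route as the paper's own proof: the same tree $\tree=A\tree_1\chancenodemixture\compl{A}\tree_2$ with $\treeevent{\tree}=B$, the same reduction via Eq.~\eqref{eq:gambofnormoptisopt}, Eq.~\eqref{eq:normgambles:chancenodes:for:sets:of:trees} and the hypothesis applied to the subtrees, ending with the identity $\opt(A\mathcal{X}\gambplus\compl{A}Z\mid B)=\opt(A\opt(\mathcal{X}\mid A\cap B)\gambplus\compl{A}Z\mid B)$ and the subset property of choice functions. Your closing observation that Property~\ref{prop:preservation:under:addition:of:elements} is not needed for this necessity direction is also accurate and consistent with the paper.
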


\begin{proof}
  Let $A$ and $B$ be non-empty events such that $A\cap B \neq \emptyset$ and $\compl{A} \cap B \neq \emptyset$, let $\mathcal{X}$ be a non-empty finite set of $A\cap B$-consistent gambles, and let $Z$ be an $\compl{A} \cap B$-consistent gamble. Let $\tree=A\tree_1 \chancenodemixture \compl{A}\tree_2$ be a consistent decision tree, where $\treeevent{\tree}=B$, $\normgambles(\tree_1) = \mathcal{X}$, and $\normgambles(\tree_2)=\{Z\}$. The existence of $\tree$ is assured by $A\cap B$-consistency of $\mathcal{X}$ and $\compl{A}\cap B$-consistency of $\{Z\}$.
By assumption,
  \begin{align*}
    \normgambles(\normoper_{\opt}(\tree)) &= \normgambles(\backopt(\tree)) \\
    &= \normgambles(\normoper_{\opt}(A\backopt(\tree_1) \chancenodemixture \compl{A} \tree_2)).
  \end{align*}
  
  From Eq.~\eqref{eq:gambofnormoptisopt}, we have $\normgambles(\normoper_{\opt}(\tree)) = \opt(\normgambles(\tree) | B)$. Similarly, with repeated applications of Eq.~\eqref{eq:gambofnormoptisopt} and~Eq.~\eqref{eq:normgambles:chancenodes:for:sets:of:trees},
  \begin{align*}
    \normgambles(\normoper_{\opt}(A\backopt(\tree_1) \chancenodemixture \compl{A} \tree_2)) &= \opt(\normgambles(A\backopt(\tree_1) \chancenodemixture \compl{A} \tree_2) | B ) \\
    &= \opt(A\normgambles(\backopt(\tree_1)) \gambplus \compl{A}Z | B) \\
    &= \opt(A\normgambles(\normoper_{\opt}(\tree_1)) \gambplus \compl{A}Z | B) \\
    &= \opt(A\opt(\normgambles(\tree_1)|A\cap B) \gambplus \compl{A}Z | B).
  \end{align*}
  Finally we note that $\normgambles(\tree) = A\mathcal{X} \gambplus \compl{A}Z$, and $\normgambles(\tree_1) = \mathcal{X}$. Therefore,
  \begin{equation*}
    \opt(A\mathcal{X} \gambplus \compl{A}Z | B)=\opt(A\opt(\mathcal{X} | A\cap B) \gambplus \compl{A}Z | B) \subseteq A\opt(\mathcal{X} | A\cap B) \gambplus \compl{A}Z.
  \end{equation*}
\end{proof}

We now prove Theorem~\ref{thm:backopt:normopt:equivalence}.

\begin{proof}[Proof of Theorem~\ref{thm:backopt:normopt:equivalence}]
  ``only if''. By Lemmas~\ref{lemma:backward:conditioning:property:is:necessary}, \ref{lemma:path:independence:is:necessary}, and~\ref{lemma:backward:mixture:property:is:necessary}, we see that satisfying backward induction implies Properties~\ref{prop:backward:conditioning:property}, \ref{prop:backward:mixture:property}, and~\ref{prop:path:independence}. Lemma~\ref{lemma:opt:of:unions:equality} completes the proof.

  ``if''. We prove this part by structural induction on the tree. In the base step, we prove that the implication holds for consistent decision trees which consist of only a single node. In the induction step, we prove that if the implication holds for the subtrees at every child of the root node, then the implication also holds for the whole tree.

First, if the decision tree $\tree$ has only a single node, and hence, a reward at the root and no further children, then by definition (Eq.~\eqref{eq:normgambles:rewards} in particular) we have $\backopt(\tree)=\normoper_{\opt}(\tree)$.

Next, suppose $\tree$ is consistent and has a chance node as its root, that is, $\tree=\bigchancenodemixture_{i=1}^n \event_i \tree_i$. By the induction hypothesis, we know that for every $\tree_i$,
\begin{equation}\label{eq:backopt:normopt:equivalence:induction:assumption}
 \normgambles(\backopt(\tree_i)) = \normgambles(\normoper_{\opt}(\tree_i)).
\end{equation}
We show that $\backopt(\tree)=\normoper_{\opt}(\tree)$. 
By Lemma~\ref{lemma:gambnorm:norm:relationship:chancenodes}, it therefore suffices to show that $\normgambles(\normoper_{\opt}(\tree)) = \normgambles(\backopt(\tree))$. By Eq.~\eqref{eq:gambofnormoptisopt} and the definition of $\normgambles$,
\begin{align*}
	\normgambles(\normoper_{\opt}(\tree)) &= \opt(\normgambles(\tree) | \treeevent{\tree})\\
	&= \opt\Bigg(\normgambles\Bigg(\bigchancenodemixture_{i=1}^n \event_i \tree_i\Bigg) \Bigg| \treeevent{\tree}\Bigg)\\
	&= \opt\Bigg(\biggambplus_{i=1}^n \event_i \normgambles(\tree_i)\Bigg| \treeevent{\tree}\Bigg),
\end{align*}
and by Eq.~\eqref{eq:backopt:normopt:equivalence:induction:assumption}, Eq.~\eqref{eq:gambofnormoptisopt}, and the definition of $\normgambles$,
\begin{align*}
	\normgambles(\backopt(\tree)) &= \normgambles\Bigg(\normoper_{\opt}\Bigg(\bigchancenodemixture_{i=1}^n \event_i \backopt(\tree_i)\Bigg)\Bigg) \\
	&= \opt\Bigg(\normgambles\Bigg(\bigchancenodemixture_{i=1}^n \event_i \backopt(\tree_i)\Bigg)\Bigg|\treeevent{\tree}\Bigg) \\
	&= \opt\Bigg(\biggambplus_{i=1}^n \event_i\normgambles(\backopt(\tree_i)) \Bigg| \treeevent{\tree}\Bigg) \\
	&= \opt\Bigg(\biggambplus_{i=1}^n \event_i\normgambles(\normoper_{\opt}(\tree_i)) \Bigg| \treeevent{\tree}\Bigg) \\
	&= \opt\Bigg(\biggambplus_{i=1}^n \event_i \opt(\normgambles(\tree_i) | \treeevent{\tree} \cap \event_i) \Bigg| \treeevent{\tree}\Bigg),
\end{align*}
whence equality follows from Lemma~\ref{lemma:opt:of:setsums:equality}.

Finally, suppose that the root of the consistent tree $\tree$ is a decision node, that is $\tree=\bigdecnodeunion_{i=1}^n \tree_i$. 
We show that $\backopt(\tree)=\normoper_{\opt}(\tree)$. By Lemma~\ref{lemma:gambnorm:norm:relationship:decnodes}, it suffices to show that $\normgambles(\backopt(\tree))=\normgambles(\normoper_{\opt}(\tree))$. Indeed,
\begin{align*}
	\normgambles(\normoper_{\opt}(\tree)) &= \opt(\normgambles(\tree)|\treeevent{\tree})\\
	&= \opt\Bigg(\normgambles\Bigg(\bigdecnodeunion_{i=1}^n\tree_i\Bigg) \Bigg| \treeevent{\tree}\Bigg)\\
	&= \opt\Bigg(\bigcup_{i=1}^n \normgambles(\tree_i) \Bigg| \treeevent{\tree}\Bigg),
\end{align*}
and,
\begin{align*}
  \normgambles\Bigg(\normoper_{\opt}\Bigg(\bigdecnodeunion_{i=1}^n \backopt(\tree_i)\Bigg)\Bigg) &= \opt\Bigg(\normgambles\Bigg(\bigdecnodeunion_{i=1}^n \backopt(\tree_i)\Bigg)\Bigg| \treeevent{\tree}\Bigg) \\
    &= \opt\Bigg(\bigcup_{i=1}^n \normgambles(\backopt(\tree_i))\Bigg| \treeevent{\tree}\Bigg) \\
  &= \opt\Bigg(\bigcup_{i=1}^n \normgambles(\normoper_{\opt}(\tree_i))\Bigg| \treeevent{\tree}\Bigg) \\
  &= \opt\Bigg(\bigcup_{i=1}^n \opt(\normgambles(\tree_i) | \treeevent{\tree_i}) \Bigg| \treeevent{\tree}\Bigg) \\
    &= \opt\Bigg(\bigcup_{i=1}^n \opt(\normgambles(\tree_i) | \treeevent{\tree}) \Bigg| \treeevent{\tree}\Bigg),
\end{align*}
whence equality follows by Lemma~\ref{lemma:opt:of:unions:equality} and Property~\ref{prop:path:independence}.

Concluding, we have shown that the implication holds for consistent decision trees consisting of a single nodes, and that if the implication holds for all children of the root node then it also holds for the whole tree. By induction, the implication holds for any consistent decision tree.
\end{proof}

\subsection{Proof of Lemma~\ref{lem:intersection:implies:insensitivity:and:preservation}}

\begin{proof}
  Assume Property~\ref{prop:intersection:property} holds.
  
  Consider any non-empty finite sets of normal form decisions $\mathcal{X}$ and $\mathcal{Y}$, and any event $A\neq\emptyset$, such that $\opt(\mathcal{X}|A) \subseteq \mathcal{Y} \subseteq \mathcal{X}$. By Property~\ref{prop:intersection:property}, it follows that $\opt(\mathcal{Y}|A)=\opt(\mathcal{X}|A)\cap\mathcal{Y}$, which is equal to $\opt(\mathcal{X}|A)$ because $\opt(\mathcal{X}|A) \subseteq \mathcal{Y}$. This proves Property~\ref{prop:insensitivity:to:non:optimal:elements}.

  Consider any non-empty finite sets of normal form decisions $\mathcal{X}$ and $\mathcal{Y}$, and any event $A\neq\emptyset$ such that $\mathcal{Y} \subseteq \mathcal{X}$. If $\opt(\mathcal{X}|A)\cap\mathcal{Y}=\emptyset$, then obviously $\opt(\mathcal{Y}|A) \supseteq \opt(\mathcal{X}|A) \cap \mathcal{Y}$. If not, then $\opt(\mathcal{Y}|A) = \opt(\mathcal{X}|A)\cap\mathcal{Y}$. So, Property~\ref{prop:preservation:under:addition:of:elements} follows.
\end{proof}

\subsection{Proof of Theorem~\ref{thm:weak:subtree:perfectness}}

\begin{lemma}\label{lemma:weak:perfectness:at:children:implies:weak:perfectness:everywhere}
  Let $\normoper$ be any normal form operator. Let $\tree$ be a consistent decision tree. If,
  \begin{enumerate}[label=(\roman*)]
  \item\label{lemma:weak:perfectness:at:children:childcondition} for all nodes $K\in\children(\tree)$ such that $K$ is in at least one element of $\normoper(\tree)$,
    \begin{equation*}
      \subtreeat{\normoper(\tree)}{K}\subseteq\normoper(\subtreeat{\tree}{K}),
    \end{equation*}
  \item\label{lemma:weak:perfectness:at:children:grandchildcondition} and, for all nodes $K\in\children(\tree)$, and all nodes $L\in\subtreeat{\tree}{K}$ such that $L$ is in at least one element of $\normoper(\subtreeat{\tree}{K})$,
  \begin{equation*}
    \subtreeat{\normoper(\subtreeat{\tree}{K})}{L}\subseteq\normoper(\subtreeat{\subtreeat{\tree}{K}}{L}),
  \end{equation*}
  \end{enumerate}
  then, for all nodes $N$ in $\tree$ such that $N$ is in at least one element of $\normoper(\tree)$,
  \begin{equation*}
    \subtreeat{\normoper(\tree)}{N}\subseteq\normoper(\subtreeat{\tree}{N}).
  \end{equation*}
\end{lemma}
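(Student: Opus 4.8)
The plan is to mirror the proof of Lemma~\ref{lemma:factuality:at:children} almost verbatim, replacing each equality of solution sets by the corresponding inclusion and checking that every inclusion points in the direction actually supplied by the hypotheses. As there, I would split into three cases by the position of $N$. If $N$ is the root of $\tree$, then $\subtreeat{\normoper(\tree)}{N}=\normoper(\tree)=\normoper(\subtreeat{\tree}{N})$, so the claim is trivial. If $N\in\children(\tree)$, the claim is precisely hypothesis~\ref{lemma:weak:perfectness:at:children:childcondition}. The remaining and only substantive case is $N$ lying strictly below some $K\in\children(\tree)$, that is, $N$ in $\subtreeat{\tree}{K}$ with $N\neq K$.

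In that case I would first record the purely topological fact that $K$ is a node of at least one element of $\normoper(\tree)$: since $N$ is in some $\atree\in\normoper(\tree)$ and $K$ lies on the unique path from the root of $\tree$ to $N$, the node $K$ must also lie in $\atree$. Next I would establish the auxiliary fact that lets hypothesis~\ref{lemma:weak:perfectness:at:children:grandchildcondition} be invoked at $L=N$, namely that $N$ is a node of at least one element of $\normoper(\subtreeat{\tree}{K})$. Here I would note $\subtreeat{\atree}{K}\in\subtreeat{\normoper(\tree)}{K}$ and apply hypothesis~\ref{lemma:weak:perfectness:at:children:childcondition} to get $\subtreeat{\atree}{K}\in\normoper(\subtreeat{\tree}{K})$; since $N$ is a node of $\subtreeat{\atree}{K}$, the fact follows.

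With both facts in hand the conclusion would follow from the chain
\begin{equation*}
  \subtreeat{\normoper(\tree)}{N}
  = \subtreeat{\subtreeat{\normoper(\tree)}{K}}{N}
  \subseteq \subtreeat{\normoper(\subtreeat{\tree}{K})}{N}
  \subseteq \normoper(\subtreeat{\subtreeat{\tree}{K}}{N})
  = \normoper(\subtreeat{\tree}{N}),
\end{equation*}
where the outer equalities are the identities $\subtreeat{\subtreeat{\cdot}{K}}{N}=\subtreeat{\cdot}{N}$ valid because $N$ is a descendant of $K$, the first inclusion is hypothesis~\ref{lemma:weak:perfectness:at:children:childcondition} after applying the monotone operation $\subtreeat{\cdot}{N}$ to both sides, and the second inclusion is hypothesis~\ref{lemma:weak:perfectness:at:children:grandchildcondition} at $L=N$, which is legitimate by the auxiliary fact just proved.

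The main obstacle, and the one place where one must not run on autopilot from the equality proof, is verifying that the inclusions point the correct way. In particular, the step showing that $N$ lies in an element of $\normoper(\subtreeat{\tree}{K})$ uses exactly the direction $\subtreeat{\normoper(\tree)}{K}\subseteq\normoper(\subtreeat{\tree}{K})$ given by hypothesis~\ref{lemma:weak:perfectness:at:children:childcondition}; were the reverse inclusion all that was available, the argument would collapse, since one could not conclude that a restricted full-tree solution is among the subtree solutions. A secondary routine check is that $\subtreeat{\cdot}{N}$ is monotone for set inclusion, which is immediate from the definition $\subtreeat{\setoftrees}{N}=\{\subtreeat{\tree}{N}\colon\tree\in\setoftrees,\,N\text{ in }\tree\}$.
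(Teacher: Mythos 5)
Your proposal is correct and follows essentially the same route as the paper's own proof: the same three-way case split, the same two auxiliary facts (that $K$ lies in an element of $\normoper(\tree)$ and that, via hypothesis~\ref{lemma:weak:perfectness:at:children:childcondition}, $N$ lies in an element of $\normoper(\subtreeat{\tree}{K})$), and the same final chain using monotonicity of $\subtreeat{\cdot}{N}$ together with the identity $\subtreeat{\subtreeat{\tree}{K}}{N}=\subtreeat{\tree}{N}$. Your explicit remark about which direction of inclusion is actually needed is a sound sanity check but introduces nothing beyond what the paper's argument already uses.
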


\begin{proof}
  If $N$ is the root of $\tree$, then the result is immediate. If $N\in\children(\tree)$, then the result follows from \ref{lemma:weak:perfectness:at:children:childcondition}. Otherwise, $N$ must be in $\subtreeat{\tree}{K}$ for one $K\in\children(\tree)$.

  By assumption, there is a $U\in\normoper(\tree)$ that contains $N$ (and of course also $K$). Therefore, $U\in\subtreeat{\normoper(\tree)}{K}$, and by \ref{lemma:weak:perfectness:at:children:childcondition}, $\subtreeat{U}{K}\in\normoper(\subtreeat{\tree}{K})$, and so $N$ is also in at least one element of $\normoper(\subtreeat{\tree}{K})$.

  We use the fact that, if $\mathcal{U}$ and $\mathcal{V}$ are sets of normal form decisions such that $\mathcal{U}\subseteq\mathcal{V}$, then for any node $N$, $\subtreeat{\mathcal{U}}{N}\subseteq\subtreeat{\mathcal{V}}{N}$. Combining everything, by \ref{lemma:weak:perfectness:at:children:childcondition},
  \begin{align*}
    \subtreeat{\subtreeat{\normoper(\tree)}{K}}{N} &\subseteq \subtreeat{\normoper(\subtreeat{\tree}{K})}{N} \\
    \intertext{hence, since $N$ is in at least one element of $\normoper(\subtreeat{\tree}{K})$, by \ref{lemma:weak:perfectness:at:children:grandchildcondition} we have}
    &\subseteq \normoper(\subtreeat{\subtreeat{\tree}{K}}{N}),
  \end{align*}
  whence the desired result follows, since $\subtreeat{\subtreeat{T}{K}}{N}=\subtreeat{T}{N}$.
\end{proof}

The following results are very similar to Lemmas~\ref{lemma:backward:conditioning:property:is:necessary}, \ref{lemma:path:independence:is:necessary}, and~\ref{lemma:backward:mixture:property:is:necessary}.

\begin{lemma}\label{lemma:weak:subtree:perfectness:backward:conditioning:property:is:necessary}
  If $\normoper_{\opt}$ is subtree perfect for normal form decisions, then $\opt$ satisfies Property~\ref{prop:backward:conditioning:property}.
\end{lemma}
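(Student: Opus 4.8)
The plan is to adapt the proof of Lemma~\ref{lemma:backward:conditioning:property:is:necessary}, replacing the backward induction identity with the weaker hypothesis of subtree perfectness for normal form decisions. To verify Property~\ref{prop:backward:conditioning:property}, I start from events $A$, $B$ with $A\cap B\neq\emptyset$ and $\compl{A}\cap B\neq\emptyset$, a non-empty finite $A\cap B$-consistent set $\mathcal{X}$ with $\{X,Y\}\subseteq\mathcal{X}$ and $AX=AY$, and a non-empty finite $\compl{A}\cap B$-consistent set $\mathcal{Z}$ with some $Z\in\mathcal{Z}$ satisfying $AX\gambplus\compl{A}Z\in\opt(A\mathcal{X}\gambplus\compl{A}\mathcal{Z}|B)$; if no such data exist, the property holds vacuously. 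Assuming moreover $X\in\opt(\mathcal{X}|A\cap B)$, the goal is to show $Y\in\opt(\mathcal{X}|A\cap B)$.

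First I would build the left tree of Figure~\ref{fig:conditioning:intersection:mixture:are:necessary:for:factuality}, namely $\tree=A\tree_1\chancenodemixture\compl{A}\tree_2$ with $\treeevent{\tree}=B$, $\normgambles(\tree_1)=\mathcal{X}$, and $\normgambles(\tree_2)=\mathcal{Z}$; Definition~\ref{def:gambles:consistent} guarantees such a consistent tree exists. Let $N$ be the root of $\tree_1$, so that $\subtreeat{\tree}{N}=\tree_1$ and $\treeevent{\tree_1}=A\cap B$. By Eq.~\eqref{eq:gambofnormoptisopt}, $\normgambles(\normoper_{\opt}(\tree))=\opt(A\mathcal{X}\gambplus\compl{A}\mathcal{Z}|B)$, which contains the gamble $AX\gambplus\compl{A}Z=AY\gambplus\compl{A}Z$.

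Next I produce a witness decision running along the $A$-branch. By Lemma~\ref{lemma:gambisgambnfd} there are $\atree\in\nfd(\tree_1)$ with $\normgambles(\atree)=\{Y\}$ and $V\in\nfd(\tree_2)$ with $\normgambles(V)=\{Z\}$, so that $W=A\atree\chancenodemixture\compl{A}V\in\nfd(\tree)$ induces exactly $AY\gambplus\compl{A}Z$. Since this gamble lies in $\normgambles(\normoper_{\opt}(\tree))$, the definition of $\normoper_{\opt}$ gives $W\in\normoper_{\opt}(\tree)$. Because $W$ passes through the $A$-arc, the node $N$ is in $W$, hence in at least one element of $\normoper_{\opt}(\tree)$, and $\subtreeat{W}{N}=\atree$. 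I then invoke subtree perfectness for normal form decisions at $N$, namely $\subtreeat{\normoper_{\opt}(\tree)}{N}\subseteq\normoper_{\opt}(\subtreeat{\tree}{N})=\normoper_{\opt}(\tree_1)$. Since $\atree=\subtreeat{W}{N}\in\subtreeat{\normoper_{\opt}(\tree)}{N}$, it follows that $\atree\in\normoper_{\opt}(\tree_1)$, whence $\{Y\}=\normgambles(\atree)\subseteq\normgambles(\normoper_{\opt}(\tree_1))=\opt(\mathcal{X}|A\cap B)$ by Eq.~\eqref{eq:gambofnormoptisopt}. This yields $Y\in\opt(\mathcal{X}|A\cap B)$, as required.

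The only subtlety---and the reason the weaker hypothesis suffices---is that subtree perfectness for normal form decisions delivers exactly the inclusion $\subseteq$ needed to push the restricted optimal decision $\atree$ into the subtree solution, with no reverse inclusion required. The main point to check carefully is that the chosen witness $W$ genuinely passes through $N$ and restricts there to $\atree$; this is arranged by selecting the optimal mixture gamble $AY\gambplus\compl{A}Z$, which runs along the $A$-branch precisely because $AX=AY$ forces it to coincide with the optimal gamble $AX\gambplus\compl{A}Z$.
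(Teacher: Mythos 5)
Your proposal is correct and follows essentially the same route as the paper's own proof: the same tree $\tree=A\tree_1\chancenodemixture\compl{A}\tree_2$ with $\normgambles(\tree_1)=\mathcal{X}$ and $\normgambles(\tree_2)=\mathcal{Z}$, the same witness decision built via Lemma~\ref{lemma:gambisgambnfd} from the identity $AX\gambplus\compl{A}Z=AY\gambplus\compl{A}Z$, and the same single application of subtree perfectness for normal form decisions at the root of $\tree_1$ to conclude $Y\in\opt(\mathcal{X}|A\cap B)$. (Only your closing remark is slightly off: since the root of $\tree$ is a chance node, every normal form decision of $\tree$ automatically contains $N$; the role of $AX=AY$ is solely to certify that the witness's gamble is optimal, not to make it pass through $N$.)
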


\begin{proof}
  Let $A$ and $B$ be non-empty events, and $\mathcal{X}$, $\mathcal{Z}$ be non-empty finite sets of gambles, such that the following properties hold: $A\cap B\neq \emptyset$, $\compl{A} \cap B \neq \emptyset$, $\mathcal{X}$ is $A\cap B$-consistent, $\mathcal{Z}$ is $\compl{A} \cap B$-consistent, and there are $X,Y\in\mathcal{X}$ such that $AX=AY$, $X \in\opt(\mathcal{X} | A\cap B)$, and $AX \gambplus \compl{A}Z \in\opt(A\mathcal{X}\gambplus \compl{A}\mathcal{Z} | B)$ for at least one $Z\in\mathcal{Z}$. If it is not possible to construct such a situation, then $\opt$ satisfies Property~\ref{prop:backward:conditioning:property} automatically. Otherwise, to prove that Property~\ref{prop:backward:conditioning:property} holds, we must show that $Y\in\opt(\mathcal{X}|A\cap B)$.
  
  Consider a consistent decision tree $\tree=A\tree_1 \chancenodemixture \compl{A}\tree_2$, where $\treeevent{\tree}=B$, $\normgambles(\tree_1)=\mathcal{X}$, and $\normgambles(\tree_2)=\mathcal{Z}$. Since $\mathcal{X}$ is $A\cap B$-consistent and $\mathcal{Z}$ is $\compl{A}\cap B$-consistent, we know from Definition~\ref{def:gambles:consistent} that there is such a $\tree$. By Lemma~\ref{lemma:gambisgambnfd}, there is a normal form decision $U\in\nfd(\tree_1)$ such that $\normgambles(U)=\{Y\}$, and a normal form decision in $V\in\nfd(\tree_2)$ such that $\normgambles(V)=\{Z\}$.
  
  Since, by assumption, $AX=AY$, obviously $AX \gambplus \compl{A}Z=AY \gambplus \compl{A}Z$, and hence, also $AY \gambplus \compl{A}Z\in\opt(A\mathcal{X}\gambplus \compl{A}\mathcal{Z} | B)$. Therefore, by definition of $\normoper_{\opt}$, $AU\chancenodemixture\compl{A}V \in \normoper_{\opt}(\tree)$. In particular, $U\in\subtreeat{\normoper_{\opt}(\tree)}{N}$, where $N$ is the root of $\tree_1$. Because $\normoper_{\opt}$ is subtree perfect for normal form decisions, it follows that also $U\in\normoper_{\opt}(\subtreeat{\tree}{N})=\normoper_{\opt}(\tree_1)$. Again applying the definition of $\normoper_{\opt}$, we conclude that indeed $Y\in\opt(\mathcal{X}|A\cap B)$.
\end{proof}

\begin{lemma}\label{lemma:weak:subtree:perfectness:preservation:is:necessary}
  If $\normoper_{\opt}$ is subtree perfect for normal form decisions, then $\opt$ satisfies Property~\ref{prop:preservation:under:addition:of:elements}.
\end{lemma}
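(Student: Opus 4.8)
The plan is to reuse the construction from the intersection-property part of Lemma~\ref{lemma:conditioning:intersection:mixture:are:necessary:for:factuality}, but to extract only the single inclusion that the weaker hypothesis---subtree perfectness for normal form decisions---provides. Fix a non-empty event $A$ and $A$-consistent sets of gambles $\mathcal{Y}\subseteq\mathcal{X}$. If $\opt(\mathcal{X}|A)\cap\mathcal{Y}=\emptyset$, then the desired inclusion $\opt(\mathcal{X}|A)\cap\mathcal{Y}\subseteq\opt(\mathcal{Y}|A)$ holds trivially, so I would assume $\opt(\mathcal{X}|A)\cap\mathcal{Y}\neq\emptyset$. Then I build the consistent decision tree $\tree=\tree_1\decnodeunion\tree_2$ with $\treeevent{\tree}=A$, $\normgambles(\tree_1)=\mathcal{Y}$, and $\normgambles(\tree_2)=\mathcal{X}$ (the right-hand tree of Fig.~\ref{fig:conditioning:intersection:mixture:are:necessary:for:factuality}); such a tree exists by $A$-consistency (Definition~\ref{def:gambles:consistent}). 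Since $\mathcal{Y}\subseteq\mathcal{X}$ we have $\normgambles(\tree)=\mathcal{X}$, and hence $\normgambles(\normoper_{\opt}(\tree))=\opt(\mathcal{X}|A)$ by Eq.~\eqref{eq:gambofnormoptisopt}.

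The key step is to identify the gamble set of the induced solution at the root node $N$ of $\tree_1$. A normal form decision of $\tree$ passing through $N$ is one that takes the left arc at the root decision node, and it restricts to a normal form decision of $\tree_1$ with the \emph{same} gamble (passing through a single-option decision node does not alter the gamble, and $\treeevent{\tree_1}=\treeevent{\tree}=A$). Combining this with Lemma~\ref{lemma:gambisgambnfd} and the definition of $\normoper_{\opt}$, I would establish
\begin{equation*}
  \normgambles(\subtreeat{\normoper_{\opt}(\tree)}{N})=\opt(\mathcal{X}|A)\cap\mathcal{Y}.
\end{equation*}
Since this intersection is non-empty, $N$ lies in at least one element of $\normoper_{\opt}(\tree)$, so subtree perfectness for normal form decisions applies and yields $\subtreeat{\normoper_{\opt}(\tree)}{N}\subseteq\normoper_{\opt}(\subtreeat{\tree}{N})=\normoper_{\opt}(\tree_1)$. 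Passing to gambles and using Eq.~\eqref{eq:gambofnormoptisopt} on $\tree_1$ gives
\begin{equation*}
  \opt(\mathcal{X}|A)\cap\mathcal{Y}=\normgambles(\subtreeat{\normoper_{\opt}(\tree)}{N})\subseteq\normgambles(\normoper_{\opt}(\tree_1))=\opt(\mathcal{Y}|A),
\end{equation*}
which is precisely Property~\ref{prop:preservation:under:addition:of:elements}.

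The main obstacle I expect is the bookkeeping behind the displayed identity for $\normgambles(\subtreeat{\normoper_{\opt}(\tree)}{N})$: one must verify both inclusions, ensuring that every gamble in $\opt(\mathcal{X}|A)\cap\mathcal{Y}$ is realised by some optimal decision through $N$ (using Lemma~\ref{lemma:gambisgambnfd} to produce a normal form decision of $\tree_1$ inducing that gamble, prefixed by the left arc), and conversely that every restricted optimal decision has a gamble lying in $\opt(\mathcal{X}|A)\cap\mathcal{Y}$. This is essentially the computation already carried out in Lemma~\ref{lemma:conditioning:intersection:mixture:are:necessary:for:factuality}; the only genuine difference is that full subtree perfectness there delivers an equality, whereas here we obtain---and only need---the single inclusion. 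Note that no chance-node reasoning enters, so none of the mixture or conditioning properties is required, consistent with Property~\ref{prop:preservation:under:addition:of:elements} being a purely decision-node phenomenon.
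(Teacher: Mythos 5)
Your proposal is correct and follows essentially the same route as the paper's own proof: the same tree $\tree=\tree_1\decnodeunion\tree_2$ with $\normgambles(\tree_1)=\mathcal{Y}$ and $\normgambles(\tree_2)=\mathcal{X}$, the same trivial handling of $\opt(\mathcal{X}|A)\cap\mathcal{Y}=\emptyset$, the same key identity $\normgambles(\subtreeat{\normoper_{\opt}(\tree)}{N})=\opt(\mathcal{X}|A)\cap\mathcal{Y}$ at the root $N$ of $\tree_1$, and the same single application of subtree perfectness for normal form decisions to obtain the inclusion into $\opt(\mathcal{Y}|A)$. Your additional remarks on the bookkeeping via Lemma~\ref{lemma:gambisgambnfd} merely make explicit what the paper leaves implicit.
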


\begin{proof}
  Let $A$ be a non-empty event, and let $\mathcal{X}$ be a non-empty finite set of $A$-consistent gambles. Let $\mathcal{Y}$ be a non-empty subset of $\mathcal{X}$. Let $\tree=\tree_1 \decnodeunion \tree_2$, where $\treeevent{\tree}=A$, $\normgambles(\tree_1)=\mathcal{Y}$, and $\normgambles(\tree_2)=\mathcal{X}$. Let $N$ be the root of $\tree_1$. 
  
  If $\opt(\mathcal{X}|A)\cap \mathcal{Y}=\emptyset$ then Property~\ref{prop:preservation:under:addition:of:elements} holds automatically. Suppose $\opt(\mathcal{X}|A)\cap\mathcal{Y}\neq\emptyset$. By definition of $\normoper_{\opt}$, $N$ appears in at least one element of $\normoper_{\opt}(\tree)$, and
  \begin{align*}
    \opt(\mathcal{X}|A)\cap \mathcal{Y}&=\normgambles(\subtreeat{\normoper_{\opt}(\tree)}{N}), \\
    \intertext{and by subtree perfectness of normal form decisions}
    &\subseteq \normgambles(\normoper_{\opt}(\tree_1)) \\
    &= \opt(\mathcal{Y}|A).
  \end{align*}

\end{proof}

\begin{lemma}\label{lemma:weak:subtree:perfectness:backward:mixture:is:necessary}
  If $\normoper_{\opt}$ is subtree perfect for normal form decisions, then $\opt$ satisfies Property~\ref{prop:backward:mixture:property}.
\end{lemma}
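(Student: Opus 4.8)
The plan is to mimic the mixture-property portion of the proof of Lemma~\ref{lemma:conditioning:intersection:mixture:are:necessary:for:factuality}, but to exploit the fact that subtree perfectness for normal form decisions supplies only the inclusion $\subtreeat{\normoper(\tree)}{N}\subseteq\normoper(\subtreeat{\tree}{N})$, which is exactly what the inclusion-only Property~\ref{prop:backward:mixture:property} demands. First I would set up, for the given $A$, $B$, $\mathcal{X}$, and $Z$, the consistent decision tree $\tree=A\tree_1\chancenodemixture\compl{A}\tree_2$ with $\treeevent{\tree}=B$, $\normgambles(\tree_1)=\mathcal{X}$, and $\normgambles(\tree_2)=\{Z\}$, that is, the left tree of Fig.~\ref{fig:conditioning:intersection:mixture:are:necessary:for:factuality}; such a tree exists by the consistency hypotheses together with Definition~\ref{def:gambles:consistent}. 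Writing $N$ for the root of $\tree_1$, note that $\subtreeat{\tree}{N}=\tree_1$ and $\treeevent{\tree_1}=A\cap B$.

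Next I would translate the set on the left of the desired inclusion into the language of $\normoper_{\opt}$. By Eq.~\eqref{eq:gambofnormoptisopt} and the definition of $\normgambles$ at a chance node, $\normgambles(\normoper_{\opt}(\tree))=\opt(\normgambles(\tree)|B)=\opt(A\mathcal{X}\gambplus\compl{A}Z|B)$. Hence, given any $W\in\opt(A\mathcal{X}\gambplus\compl{A}Z|B)$, there is a normal form decision $\atree\in\normoper_{\opt}(\tree)$ with $\normgambles(\atree)=\{W\}$. Decomposing $\atree=A\atree_1\chancenodemixture\compl{A}\atree_2$ with $\atree_1\in\nfd(\tree_1)$ and $\atree_2\in\nfd(\tree_2)$, we have $\normgambles(\atree_2)=\{Z\}$ (since $\normgambles(\tree_2)$ is a singleton) and $\normgambles(\atree_1)=\{X\}$ for some $X\in\mathcal{X}$, so that $W=AX\gambplus\compl{A}Z$.

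The final step applies the hypothesis. Since $N$ lies in $\atree\in\normoper_{\opt}(\tree)$, the node $N$ is in at least one element of $\normoper_{\opt}(\tree)$, so subtree perfectness for normal form decisions yields $\subtreeat{\normoper_{\opt}(\tree)}{N}\subseteq\normoper_{\opt}(\tree_1)$. As $\subtreeat{\atree}{N}=\atree_1$, it follows that $\atree_1\in\normoper_{\opt}(\tree_1)$, and by Definition~\ref{def:normoper} this means $\{X\}=\normgambles(\atree_1)\subseteq\opt(\normgambles(\tree_1)|\treeevent{\tree_1})=\opt(\mathcal{X}|A\cap B)$. Therefore $X\in\opt(\mathcal{X}|A\cap B)$, whence $W=AX\gambplus\compl{A}Z\in A\opt(\mathcal{X}|A\cap B)\gambplus\compl{A}Z$, establishing the inclusion and hence Property~\ref{prop:backward:mixture:property}.

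I do not anticipate a serious obstacle: the argument is a direct transcription of the mixture-property step of Lemma~\ref{lemma:conditioning:intersection:mixture:are:necessary:for:factuality} and of Lemma~\ref{lemma:backward:mixture:property:is:necessary}, the only real subtlety being that the weaker hypothesis delivers precisely the single inclusion required rather than the equality used for the full mixture property. The remaining care is purely bookkeeping, namely verifying $\treeevent{\tree_1}=A\cap B$ and correctly performing the chance-node decomposition of the optimal normal form decision $\atree$.
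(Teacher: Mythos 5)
Your proposal is correct and follows essentially the same route as the paper's own proof: the same tree $\tree=A\tree_1\chancenodemixture\compl{A}\tree_2$ with $\treeevent{\tree}=B$, an arbitrary optimal gamble realized by a normal form decision through $N$, and a single application of subtree perfectness for normal form decisions to land in $\normoper_{\opt}(\tree_1)$ and conclude $X\in\opt(\mathcal{X}|A\cap B)$. The only cosmetic difference is that you decompose an optimal decision inducing $W$, whereas the paper starts from $X$ and builds the decision $AU\chancenodemixture\compl{A}\tree_2$ via Lemma~\ref{lemma:gambisgambnfd}; both directions of this bookkeeping are equally valid.
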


\begin{proof}
  Let $A$ and $B$ be non-empty events such that $A\cap B \neq \emptyset$ and $\compl{A} \cap B \neq \emptyset$, let $\mathcal{X}$ be a non-empty finite set of $A\cap B$-consistent gambles, and let $Z$ be an $\compl{A} \cap B$-consistent gamble. Let $\tree=A\tree_1 \chancenodemixture \compl{A}\tree_2$ be a consistent decision tree, where $\treeevent{\tree}=B$, $\normgambles(\tree_1) = \mathcal{X}$, and $\tree_2$ is simply a normal form decision with $\normgambles(\tree_2)=\{Z\}$. The existence of $\tree$ is assured by $A\cap B$-consistency of $\mathcal{X}$ and $\compl{A}\cap B$-consistency of $\{Z\}$. Let $N$ be the root of $\tree_1$.
  

  Consider any gamble $AX\gambplus\compl{A}Z\in\opt(A\mathcal{X}\gambplus\compl{A}Z|B)$. By Lemma~\ref{lemma:gambisgambnfd}, there is a $U\in\nfd(\tree_1)$ such that $\normgambles(U)=X$. By definition of $\normoper_{\opt}$, it follows that $AU\chancenodemixture\compl{A}\tree_2 \in \normoper_{\opt}(\tree)$, and hence, in particular, $U\in\subtreeat{\normoper_{\opt}(\tree)}{N}$. By subtree perfectness for normal form decisions, $U\in\normoper_{\opt}(\tree_1)$. Again applying the definition of $\normoper_{\opt}$, we find that $X\in\opt(\mathcal{X}|A\cap B)$, thus indeed $AX\gambplus\compl{A}Z\in A\opt(\mathcal{X}|A\cap B)\gambplus\compl{A}Z$, whence Property~\ref{prop:backward:mixture:property} is established.
\end{proof}

We can now prove Theorem~\ref{thm:weak:subtree:perfectness}.

\begin{proof}[Proof of Theorem~\ref{thm:weak:subtree:perfectness}]
  ``only if''. Follows from Lemmas~\ref{lemma:weak:subtree:perfectness:backward:conditioning:property:is:necessary}, \ref{lemma:weak:subtree:perfectness:preservation:is:necessary}, and~\ref{lemma:weak:subtree:perfectness:backward:mixture:is:necessary}.
  
  ``if''. We proceed as usual by structural induction. The base step is trivial as usual. Let $\children(\tree)=\{K_1,\dots,K_n\}$ and let $\tree_i=\subtreeat{\tree}{K_i}$. The induction hypothesis says that $\normoper_{\opt}$ is subtree perfect for normal form decisions on all $T_i$. More precisely, for all $\tree_i$, and for every $L$ that is in at least one element of $\normoper_{\opt}(\tree_i)$,
  \begin{equation*}
    \subtreeat{\normoper_{\opt}(\tree_i}{L}\subseteq \normoper_{\opt}(\subtreeat{\tree}{L}).
  \end{equation*}
  We must show that, for any $N$ in at least one element of $\normoper_{\opt}(\tree)$,
    \begin{equation*}
    \subtreeat{\normoper(\tree)}{N} \subseteq \normoper(\subtreeat{\tree}{N}).
  \end{equation*}
  By the induction hypothesis and Lemma~\ref{lemma:weak:perfectness:at:children:implies:weak:perfectness:everywhere}, it suffices to show this only for $N\in\children(\tree)$, that is, to show that
  \begin{equation}\label{eq:thm:weak:subtree:perfectness}
    \subtreeat{\normoper_{\opt}(\tree)}{K_i}\subseteq \normoper_{\opt}(\tree_i)
  \end{equation}
  for each $i$ such that $K_i$ is in at least one element of $\normoper_{\opt}(\tree)$.

   Suppose the root of $\tree$ is a decision node, so $\tree=\bigdecnodeunion_{i=1}^n \tree_i$. Let $U$ be an element of $\normoper_{\opt}(\tree)$. There is a $j$ such that $K_j$ is in $U$; let $U_j$ denote $\subtreeat{U}{K_j}$. To establish Eq.~\eqref{eq:thm:weak:subtree:perfectness} we must show that $U_j\in\normoper_{\opt}(\tree_j)$.
  
  Note that $\normgambles(U_j)=\normgambles(U)\subseteq\opt(\normgambles(\tree)|\treeevent{\tree})$ since $U\in\normoper_{\opt}(T)$. Obviously, also $\normgambles(U_j)\in\normgambles(\tree_j)$ by definition of $\normgambles$. Hence, it must hold that
  \begin{align*}
    \normgambles(U_j)&\subseteq\opt(\normgambles(\tree)|\treeevent{\tree})\cap\normgambles(\tree_j), \\
    \intertext{but, also, because $\normgambles(\tree_j)\subseteq\normgambles(\tree)$, and once noted that $\treeevent{\tree}=\treeevent{\tree_j}$, it follows from Property~\ref{prop:preservation:under:addition:of:elements} that}
    &\subseteq
    \opt(\normgambles(\tree_j)|\treeevent{\tree_j})
  \end{align*}
  Putting everything together, we confirm that $U_j\in\normoper_{\opt}(\tree_j)$. This proves the induction step for decision nodes.

  Now suppose that the root of $\tree$ is a chance node, so $\tree=\bigchancenodemixture_{i=1}^n \event_i \tree_i$. Again, let $U=\bigchancenodemixture_{i=1}^n \event_iU_i\in\normoper_{\opt}(\tree)$. To establish Eq.~\eqref{eq:thm:weak:subtree:perfectness} we must show that $U_i\in\normoper_{\opt}(\tree_i)$ for all $i$.

  Indeed, since $U\in\normoper_{\opt}(\tree)$,
  \begin{align*}
    \normgambles(U)\in\opt(\normgambles(T))
    &=\opt\Bigg(\biggambplus\event_i\normgambles(\tree_i)\Bigg|\treeevent{\tree}\Bigg)
    \\
    \intertext{so by Lemma~\ref{lemma:opt:of:setsums:subseteq},}
    &\subseteq \biggambplus\event_i \opt(\normgambles(\tree_i)|\treeevent{\tree}\cap\event_i).
  \end{align*}
  So, for each $\tree_i$, there is a normal form decision $V_i\in\normoper_{\opt}(\tree_i)$ such that $E_i \normgambles(V_i) = E_i \normgambles(U_i)$. Can we apply Property~\ref{prop:backward:conditioning:property}?
   
  Obviously, $\{\normgambles(V_i), \normgambles(U_i)\}\subseteq \normgambles(\tree_i)$, and 
  \begin{multline*}
    \event_i\normgambles(V_i)\gambplus\compl{\event_i}Z
    =\normgambles(U)
    \\
    \subseteq\opt(\biggambplus\event_i\normgambles(\tree_i)|\treeevent{\tree})
    =\opt(\event_i\normgambles(\tree_i)\gambplus\compl{\event_i}\mathcal{Z}|\treeevent{\tree})
  \end{multline*}
  for suitable choices for $\mathcal{Z}$ and a $Z\in\mathcal{Z}$. Therefore we can apply Property~\ref{prop:backward:conditioning:property} to conclude that $U_i$ is in $\normoper_{\opt}(T_i)$ for each $i$. This proves the induction step for chance nodes.
\end{proof}

\subsection{Proofs for Section~\ref{sec:equivalence}}

\begin{proof}[Proof of Lemma~\ref{lemma:equivalent:extensive:form:construction}]
  If $N$ is in at least one element of $\normoper(\tree)$ but does not appear in $\extoper(\tree)$, then $N$ will not appear in any element of $\nfd(\extoper(\tree))$ and equivalence will fail. If a node $M$ is in $\extoper(\tree)$ but not in any element of $\normoper(\tree)$, then $M$ will appear in at least one element of $\nfd(\extoper(\tree))$ and equivalence will fail. Therefore a node is in $\extoper(\tree)$ if and only if it is in at least one element of $\normoper(\tree)$.
\end{proof}

\begin{proof}[Proof of Lemma~\ref{lemma:corresponding:operators:both:factual:or:neither}]
  ``if''. Suppose $\extoper$ is subtree perfect and a node $N$ is in $\extoper(\tree)$, then $N$ is in at least one element of $\normoper(\tree)$. Because $\nfd$ and $\mathrm{st}_N$ commute, we have
  \begin{align*}
    \normoper(\subtreeat{\tree}{N}) &= \nfd(\extoper(\subtreeat{\tree}{N}))=\nfd(\subtreeat{\extoper(\tree)}{N}) \\
     &= \subtreeat{\nfd(\extoper(\tree))}{N}=\subtreeat{\normoper(\tree)}{N}.
  \end{align*}
  This demonstrates subtree perfectness of $\normoper$.
  
  ``only if''. By Lemma~\ref{lemma:equivalent:extensive:form:construction}, for a particular $\normoper$ there can be no more than one equivalent $\extoper$. We show that this $\extoper$ is subtree perfect. A node $N$ is in this $\extoper$ if and only if it is in at least one element of $\normoper(\tree)$. Similarly, a node $M$ is in $\subtreeat{\extoper(\tree)}{N}$ if and only if $M$ is in at least one element of $\subtreeat{\normoper(\tree)}{N}$. By subtree perfectness of $\normoper$, the latter is satisfied if and only if $M$ is in at least one element of $\normoper(\subtreeat{\tree}{N})$. But, again by definition of $\extoper$, the latter is satisfied if and only if $M$ is in $\extoper(\subtreeat{\tree}{N})$. This establishes subtree perfectness of $\extoper$.
\end{proof}

\begin{proof}[Proof of Theorem~\ref{thm:factual:norm:implies:corresponding:ext}]
  By Lemma~\ref{lemma:corresponding:operators:both:factual:or:neither}, if an equivalent $\extoper$ exists then it is subtree perfect. We must show that the $\extoper$ constructed in Lemma~\ref{lemma:equivalent:extensive:form:construction} satisfies
  \begin{equation*}
    \normoper_{\opt}(\tree)=\nfd(\extoper(\tree))
  \end{equation*}
  for all consistent decision trees $\tree$.
  
  We now proceed by structural induction. The base step, that $\nfd(\extoper(\tree))=\normoper_{\opt}(\tree)$ for any decision tree comprising only a single node, is as usual satisfied trivially.

  Let us proceed with the induction step. The induction hypothesis states that, for any node $K$ in $\children(\tree)$, $\nfd(\extoper(\subtreeat{\tree}{K}))=\normoper_{\opt}(\subtreeat{\tree}{K})$. We must show that $\normoper_{\opt}(\tree)=\nfd(\extoper(\tree))$. 
  
  It is useful to show first that, if $\mathcal{K}$ is the set of all $K\in\children(\tree)$ that appear in at least one element of $\normoper_{\opt}(\tree)$ (or equivalently, that appear in $\extoper(\tree)$), then for any $K\in\mathcal{K}$,
    \begin{equation}\label{thm:factual:norm:helper}
    \subtreeat{\nfd(\extoper(\tree))}{K} = \subtreeat{\normoper_{\opt}(\tree)}{K}.
  \end{equation} 
  Consider any node $K$ in $\children(\tree)$ that appears in $\extoper(\tree)$. Clearly,
  \begin{align*}
    \subtreeat{\nfd(\extoper(\tree))}{K}
    &=\nfd(\subtreeat{\extoper(\tree)}{K})
    \\
    \intertext{and since we just proved that $\extoper$ is subtree perfect,}
    &=\nfd(\extoper(\subtreeat{\tree}{K}))
    \\
    \intertext{and by the induction hypothesis,}
    &=\normoper_{\opt}(\subtreeat{\tree}{K})
    \\
    \intertext{but, by definition of $\extoper$, the node $K$ also appears in at least one element of $\normoper_{\opt}(\tree)$, so by the subtree perfectness of $\normoper_{\opt}$,} 
    &=\subtreeat{\normoper_{\opt}(\tree)}{K}.
  \end{align*}
  This establishes Eq.~\eqref{thm:factual:norm:helper}.
  
  Now, suppose that the root of $\tree$ is a decision node. Observe that \begin{align*}
    \nfd(\extoper(\tree)) &= \bigdecnodeunion_{K\in\mathcal{K}}\nfd(\subtreeat{\extoper(\tree)}{K}), \\
    \intertext{and since $\subtreeat{\cdot}{}$ and $\nfd(\cdot)$ commute,}
    &= \bigdecnodeunion_{K\in\mathcal{K}}\subtreeat{\nfd(\extoper(\tree))}{K}.
  \end{align*}
  Also, because $\opt$ satisfies Properties~\ref{prop:conditioning:property} and~\ref{prop:intersection:property}, we have (as seen in the proof of Theorem~\ref{thm:conditions:to:be:factual})
  \begin{equation*}
    \normoper_{\opt}(\tree)=\bigdecnodeunion_{K\in\mathcal{K}}\subtreeat{\normoper_{\opt}(\tree)}{K},
  \end{equation*}
  whence by Eq.~\eqref{thm:factual:norm:helper},
  \begin{equation*}
    \normoper_{\opt}(\tree)=\nfd(\extoper(\tree)).
  \end{equation*}
  
  Finally, suppose that the root of $\tree$ is a chance node. Here, $\mathcal{K}$ is simply $\children(\tree)=\{K_1, \dots, K_n\}$. Similarly to before, we have \begin{align*}
    \nfd(\extoper(\tree)) &= \bigchancenodemixture_{i=1}^n E_i \nfd(\subtreeat{\extoper(\tree)}{K_i}) \\
    &= \bigchancenodemixture_{i=1}^n E_i \subtreeat{\nfd(\extoper(\tree))}{K_i}.
  \end{align*} Since $\opt$ satisfies Properties~\ref{prop:conditioning:property}, \ref{prop:intersection:property}, and~\ref{prop:mixture:property}, we have (as seen in the proof of Theorem~\ref{thm:conditions:to:be:factual}),
   \begin{align*}
    \normoper_{\opt}(\tree)&=\bigchancenodemixture_{i=1}^n E_i \normoper_{\opt}(\subtreeat{\tree}{K_i}) \\
    &= \bigchancenodemixture_{i=1}^n E_i \subtreeat{\normoper_{\opt}(\tree)}{K_i},
  \end{align*}
  whence by Eq.~\eqref{thm:factual:norm:helper}, we have
  \begin{equation*}
    \nfd(\extoper(\tree))=\normoper_{\opt}(\tree).
  \end{equation*}
\end{proof}

\end{document}